\documentclass[11pt,a4paper,twoside]{article}
\usepackage[hmarginratio=1:1,bmargin=1.3in]{geometry}
\usepackage[OT2,T1]{fontenc}
\usepackage{amsmath, amsthm}
\usepackage{amsfonts}
\usepackage{amssymb}
\usepackage[all]{xy}
\usepackage{graphicx}
\usepackage{xcolor}
\usepackage[nottoc, notlof, notlot]{tocbibind}
\usepackage{array}
\usepackage{url}
\usepackage{rotating}
\usepackage{fancyhdr}
\usepackage{titlesec}
\usepackage{xfrac} 
\usepackage{multicol}
\usepackage{comment}
\usepackage{cases}
\usepackage{hyperref}
\usepackage[normalem]{ulem}
\usepackage{enumerate}

\pagestyle{fancy}

\fancyhead[RO]{}
\fancyhead[LO]{\itshape\nouppercase{\leftmark}}
\fancyhead[RE]{\itshape\nouppercase{\leftmark}}
\fancyhead[LE]{}

\title{Central simple algebras, Milnor $\mathrm{K}$-theory and homogeneous spaces over complete discretely valued fields of dimension 2}
\author{Philippe Gille\footnote{CNRS, Université de Lyon 1
and Simion Stoilow Institute of Mathematics (Bucharest), \texttt{gille@math.univ-lyon1.fr}} , Diego Izquierdo\footnote{Ecole polytechnique, \texttt{diego.izquierdo@polytechnique.edu}} ~and Giancarlo Lucchini Arteche\footnote{Universidad de Chile, \texttt{luco@uchile.cl}}}
\date{}

\titleformat{\section}[hang]{\center\Large\bf}{\thesection.}{0.5cm}{}

\DeclareSymbolFont{cyrletters}{OT2}{wncyr}{m}{n}
\DeclareMathSymbol{\Sha}{\mathalpha}{cyrletters}{"58}
\DeclareMathSymbol{\Brusse}{\mathalpha}{cyrletters}{"42}

\theoremstyle{plain}
\newtheorem{theorem}{Theorem}[section]

\newtheorem{lemma}[theorem]{Lemma}
\newtheorem{proposition}[theorem]{Proposition}
\newtheorem{corollary}[theorem]{Corollary}
\newtheorem{definition}[theorem]{Definition}

\newtheorem{conj}[theorem]{Conjecture}

\theoremstyle{definition}
\newtheorem{remarque}[theorem]{Remark}

\newtheorem{example}[theorem]{Example}

\renewcommand{\cal}[1]{\mathcal{#1}}
\newcommand \spec {{\rm{Spec\,}}}

\newcommand \Br {{\rm{Br}}}

\newcommand \bD {\mathbf{D}}
\newcommand \Hom {{\mathrm {Hom}}}

\newcommand{\gm}{\mathbb{G}_\mathrm{m}}

\def\Gal{\mathop{\rm Gal}\nolimits}
\def\cO{\mathcal O}
\def\cB{\mathcal B}
\def\cA{\mathcal A}
\def\cG{\mathcal G}
\def\Stab{\mathop{\rm Stab}\nolimits}
\def\cP{\mathcal P}

\newcommand{\K}{\mathrm{K}^\mathrm{M}}
\newcommand{\kk}{\mathrm{k}^\mathrm{M}}

\newcommand{\ad}{\mathrm{ad}}
\newcommand{\sep}{\mathrm{sep}}

\newcommand \Z {{\mathbb Z}}
\newcommand \Q {{\mathbb Q}}
\newcommand \F {{\mathbb F}}
\newcommand \N {{\mathbb N}}

\renewcommand{\ker}{\mathrm{Ker}}

\newcommand \im {{\rm {Im\,}}}
\newcommand \res {{\rm{Res\,}}}
\newcommand \cores {{\rm{Cores\,}}}
\newcommand{\solv}{\mathrm{solv}}
\newcommand{\car}{\mathrm{char}}

\usepackage{marvosym}

\begin{document}

\maketitle

\begin{abstract}
Let $K$ be a complete discretely valued field of dimension $2$. We prove the following statements on the arithmetic of $K$:
\begin{itemize}
\item The ``period equals index'' property holds for central simple $K$-algebras.
\item If $K$ has characteristic $0$, for every prime $p$, every class in the Milnor $\mathrm{K}$-theory modulo $p$ is represented by a symbol.
\item Serre's Conjecture II holds for the field $K$. That is, for every semisimple and simply connected $K$-group $G$, the set $H^1(K,G)$ is trivial.
\end{itemize}

\textbf{MSC Classes:} 16K50, 19F15, 11E72, 12G05, 12G10, 20G10.

\textbf{Keywords:} Period-index problem, Milnor $\mathrm{K}$-theory, Galois cohomology, principal homogeneous spaces, Serre's conjecture II.
\end{abstract}

\section{Introduction}\label{sec intro}

In this article, we investigate various classical algebraic and arithmetic questions related to central simple algebras, Milnor $K$-theory and homogeneous spaces over complete discrete valuation fields with dimension $2$. Most of the results we provide are well-known when the residue field is assumed to be perfect, and our goal is to remove that assumption. This fills an important gap in the literature, since complete discrete valuation fields with imperfect residue fields are common in arithmetic settings, even when one is interested in studying characteristic zero fields. They arise for instance as completions of finitely generated fields.

\paragraph{Central simple algebras and period-index problem.} The period-index problem is a classic in the theory of central simple algebras and the Brauer groups of fields. The period $\mathrm{per}(\alpha)$ of a Brauer class $\alpha$ is its order in the Brauer group and its index $\mathrm{ind}(\alpha)$ is the degree of the division algebra underlying a representative of the class. It is well-known that $\mathrm{per}(\alpha)|\mathrm{ind}(\alpha)$ and that both have the same prime factors (cf.~for instance \cite[Thm.~2.8.7]{GS}). Relations of the type $\mathrm{ind}(\alpha)|\mathrm{per}(\alpha)^n$ are known for many fields having a nice algebraic, geometric or arithmetic behaviour (e.g. \cite{Artin,AAIKL,BenoistPeriodIndex,BHN,ColliotOjangurenParimala,DeJongPeriodIndex,FordSaltman,LieblichPeriodIndex,SaltmanPeriodIndex,ParimalaSureshPeriodIndex}). It is often reasonable to expect the equality $\mathrm{ind}(\alpha) = \mathrm{per}(\alpha)$ for Brauer classes $\alpha$ over a field with dimension $2$. From this perspective and thanks to an argument of Kato, we prove that such a result holds over any complete discrete valuation field $K$ with dimension $2$ (cf.~Propositions \ref{prop csa1} and \ref{prop csa2}). Moreover, in the case where $K$ has characteristic $0$ and contains all $p$-th roots of unity, where $p$ is the characteristic exponent of its residue field, arguments of Saltman allow us to check a stronger result, stating that all central simple algebras over $K$ are cyclic.

\paragraph{Milnor $\mathrm{K}$-theory.} If $K$ is a field containing the $p$-th roots of unity for a given prime $p$, the Bloch-Kato conjecture (a result now proved by Rost and Voevodsky) provides an isomorphism
\[\K_2(K)/p\xrightarrow{\sim} \Br(K)[p],\]
sending the class of the symbol $\{x,y\}$ to the class of the cyclic algebra $(x,y)_{\zeta_p}$. In particular, the cyclicity of central simple algebras of period $p$ over $K$ can be restated in terms of Milnor $\mathrm{K}$-theory as every class in $\K_2(K)/p$ being represented by a symbol. It is then natural to ask whether such a result holds for complete discretely valued fields of dimension 2 without the assumption on roots of unity. The first main result of this article, proved in \S\ref{sec K-th}, shows this for fields of characteristic $0$:

\begin{theorem}[Theorem \ref{thm period index}]\label{thm period index intro}
Let $K$ be a complete discretely valued field of characteristic $0$ with residue field $\bar K$ of dimension $1$. Let $p$ be a prime number. Then every class in $\K_2(K)/p$ is represented by a symbol.
\end{theorem}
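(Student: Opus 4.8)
I would separate the argument according to whether the residue characteristic equals $p$. Throughout fix the prime $p$, a uniformizer $\pi$ of $K$, and write $k:=\bar K$. The common starting point is that $\K_2(k)/p=0$: when $p\neq\car k$ the Merkurjev--Suslin isomorphism identifies this group with $H^2(k,\mu_p^{\otimes 2})$, which vanishes because $\cd_p(k)\le \dim(k)=1$; when $p=\car k$ the Bloch--Kato--Gabber isomorphism identifies it with a subgroup of $\Omega^2_k$, which is $0$ since $\dim(k)=1$ forces $[k:k^p]\le p$ and hence $\Omega^2_k=0$. If $p\neq\car k$ (in particular whenever $\car k=0$) I would finish immediately: for a complete --- hence henselian --- discretely valued field with $p$ invertible in $k$, the residue map fits into a split short exact sequence
\[ 0 \longrightarrow \K_2(k)/p \longrightarrow \K_2(K)/p \xrightarrow{\ \partial\ } k^*/(k^*)^p \longrightarrow 0, \]
a section being $\bar u\mapsto\{\tilde u,\pi\}$. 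As the left-hand term vanishes, $\partial$ is an isomorphism and every class of $\K_2(K)/p$ equals $\{\tilde u,\pi\}$ for some unit $\tilde u$, i.e. is a symbol.

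The substantial case is $p=\car k$, so that $K$ has mixed characteristic $(0,p)$. My plan is to reduce to the situation where $K$ contains $\mu_p$. Put $K':=K(\mu_p)$; since a primitive $p$-th root of unity reduces to $1$ in the residue field, this is a totally ramified cyclic extension of $K$ of degree $d\mid p-1$ (so $\gcd(d,p)=1$), again complete discretely valued of characteristic $0$, with the same residue field $k$ and with $\mu_p\subset K'$. Over $K'$ the Merkurjev--Suslin isomorphism $\K_2(K')/p\cong\Br(K')[p]$ matches symbols with classes of cyclic algebras, so the cyclicity statement of Proposition \ref{prop csa2} (applicable since $\mu_p\subset K'$ and $k$ has dimension $1$) shows that every element of $\K_2(K')/p$ is a symbol. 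Finally, since $d$ is prime to $p$ the composite $\cores\circ\res=d$ is invertible on $\K_2(K)/p$, so $\res\colon\K_2(K)/p\to\K_2(K')/p$ is injective.

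It remains to descend the symbol property along $K'/K$, and this is where I expect the real difficulty. Given $\alpha\in\K_2(K)/p$, its restriction $\res(\alpha)$ is a $\Gal(K'/K)$-invariant symbol of $\K_2(K')/p$; the goal is to produce from it a single symbol over $K$. The mechanism I would aim for is the projection formula: if one can represent $\res(\alpha)=\{x,c\}$ with the second entry $c$ lying in $K^*$, then $d\,\alpha=\cores\{x,c\}=\{N_{K'/K}(x),\,c\}$, whence $\alpha=\{N_{K'/K}(x)^{d'},c\}$ with $d'\equiv d^{-1}\pmod p$ is a single symbol. The crux is therefore a normalization lemma: every $\Gal(K'/K)$-invariant symbol of $\K_2(K')/p$ admits a representative with one slot descending to $K$. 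To establish it I would exploit the cyclic action of $\Gal(K'/K)$ through the cyclotomic character together with the eigenspace decomposition of $\K_2(K')/p$ over the group algebra $\F_p[\Gal(K'/K)]$ (semisimple, as $\gcd(d,p)=1$), using Hilbert 90 to move the symbol into the relevant eigencomponent.

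As an alternative for the mixed-characteristic case, one can argue directly through Kato's filtration $U^\bullet\K_2(K)$: modulo $p$ its graded pieces are quotients of $\Omega^1_k\oplus k$ and vanish above $\tfrac{pe}{p-1}$ (with $e=v_K(p)$), and the hypothesis $\dim(k)=1$ makes $\Omega^1_k$ one-dimensional --- every form being $x\,d\log t$ for a fixed lift $t$ of a $p$-basis of $k$ --- so each graded piece is monogenic. The obstacle is identical in this route: collapsing the resulting sum of symbols, spread across the finitely many filtration levels, into a single symbol. The one-dimensionality of $\Omega^1_k$ is exactly what should make this telescoping possible.
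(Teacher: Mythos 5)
Your treatment of the case $\mathrm{char}(\bar K)\neq p$ coincides with the paper's, and the reduction of the remaining mixed-characteristic case to $K':=K(\mu_p)$ --- where it is Proposition \ref{prop csa1} (not \ref{prop csa2}) that gives cyclicity, hence the symbol property via Bloch--Kato --- is also where the paper starts. Two problems arise after that. First, a factual one: $K(\mu_p)/K$ is \emph{not} totally ramified in general, even though $\zeta_p$ reduces to $1$; the paper's Remark \ref{rem H^1_p} exhibits $K=\Q_3(\sqrt{-6})$, for which $K(\zeta_3)/K$ is unramified of degree $2$ and the residue field grows from $\F_3$ to $\F_9$. This is precisely why Proposition \ref{prop isom Kato} has to factor $K(\zeta_p)/K$ through its maximal unramified subextension and treat the unramified and totally ramified parts by different arguments; your descent cannot assume the residue field is unchanged.

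Second, and more seriously, the entire weight of your argument rests on the unproved ``normalization lemma'': that a $\Gal(K'/K)$-invariant class of $\K_2(K')/p$, known to be a symbol, admits a symbol representative with one slot in $K^\times$. That is exactly the hard descent problem, and the appeal to eigenspace decompositions and Hilbert 90 is a direction, not an argument; note also that since $\zeta_p\notin K$ the groups $\K_2(K)/p\cong H^2(K,\mu_p^{\otimes 2})$ and $\Br(K)[p]=H^2(K,\mu_p)$ differ by a Tate twist, so Albert-style descent statements for cyclicity of algebras do not transfer verbatim to symbols in Milnor $\mathrm K$-theory. The paper never descends a symbol directly. Instead it descends Kato's \emph{graded} description of the unit filtration $u_2^i(K)$ level by level (Proposition \ref{prop isom Kato}), where restriction and norm can be controlled explicitly because each graded quotient is a group of differential forms over the residue field, and then collapses the resulting finite sum of symbols, one filtration level at a time, using the absorption Lemmas \ref{lema absorcion i coprimo}, \ref{lema absorcion i divide a p} and \ref{lema absorcion i=0}. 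Your alternative sketch via Kato's filtration is the right strategy in spirit, but it leaves open both of the steps that constitute the actual proof: extending Kato's isomorphisms to the case $\zeta_p\notin K$, and the telescoping of the sum of symbols into a single one.
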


In order to tackle this statement, the main tool is a description of the $\mathrm{K}$-theory modulo $p$ for complete discretely valued fields of mixed characteristic given by Kato. However, this description works under the hypothesis that $K$ contains the $p$-th roots of unity. We remove this assumption in Proposition \ref{prop isom Kato}. Theorem \ref{thm period index intro} then readily follows by analogy with Saltman's arguments, which we present in this setting in the appendix.

\paragraph{Homogeneous spaces.}
Period-index questions have important applications to the arithmetic of homogeneous spaces over low-dimensional fields, and more precisely to the so-called Serre's Conjecture II:

\begin{conj}[Serre's conjecture II]
Let $K$ be a field of dimension $\leq 2$. Then every principal homogeneous space under a semisimple simply connected $K$-group has a rational point.
\end{conj}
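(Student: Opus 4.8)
The plan is to reduce the conjecture, through the structure theory of semisimple groups, to the case of an absolutely almost simple simply connected group, and then to argue type by type along the Dynkin classification. First I would use that any semisimple simply connected $K$-group $G$ is a direct product of factors of the form $R_{L/K}(G')$ with $L/K$ finite separable and $G'$ absolutely almost simple and simply connected over $L$. Shapiro's lemma gives $H^1(K, R_{L/K}G') \cong H^1(L, G')$, and since a finite separable extension does not raise the dimension, it suffices to show $H^1(L, G') = 1$ for every absolutely almost simple simply connected $G'$ over a field $L$ of dimension $\leq 2$. The remainder of the argument then splits according to the type of $G'$.

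For the classical types ${}^1\!A_n, {}^2\!A_n, B_n, C_n$ and non-trialitarian $D_n$ I would work through algebras with involution and hermitian forms. In inner type $A$ one has $G' = \SL_1(A)$ and $H^1(L, G') = L^\times/\nrd(A^\times)$, so the statement reduces to surjectivity of the reduced norm $\nrd : A^\times \to L^\times$, which is known over fields of dimension $\leq 2$ by the $K$-theoretic results of Merkurjev and Suslin. The remaining classical types are then treated by translating torsors into isomorphism classes of hermitian or quadratic forms and invoking the theorem of Bayer-Fluckiger and Parimala, whose proof rests precisely on the vanishing of the relevant higher Galois cohomology over such fields; the requisite control of the underlying division algebras is of exactly the period-index type studied earlier in this paper.

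For the exceptional types the approach is cohomological, via the Rost invariant $r_{G'} : H^1(L, G') \to H^3(L, \Q/\Z(2))$. Since $\dim(L) \leq 2$ forces $H^3(L, \Q/\Z(2)) = 0$ (including its wild $p$-part, which is where the techniques of this paper are relevant), the Rost invariant is identically trivial and the whole problem collapses to proving that $r_{G'}$ has trivial kernel. For $G_2$ this is immediate, since $H^1(L, G_2)$ classifies octonion algebras and embeds into $H^3(L, \Z/2) = 0$; for $F_4$ one uses that an Albert algebra whose degree-$3$ and degree-$5$ invariants vanish must be split. The trialitarian cases ${}^3\!D_4, {}^6\!D_4$ and the types ${}^1\!E_6, {}^2\!E_6, E_7$ are then handled by the known analysis of the kernel of the Rost invariant in these ranks, following Chernousov, Garibaldi and Gille.

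The main obstacle, and the point at which this plan does not yet close, is the type $E_8$. Here $G'$ has trivial center and carries no faithful action on an algebraic structure rich enough to parametrise its torsors by hand, so the Rost invariant is essentially the only available tool; and whether its kernel is trivial over an arbitrary field of dimension $\leq 2$ is exactly the question that remains open. A complete proof would have to either construct new cohomological invariants detecting this kernel in degree $> 3$, or show directly that every class annihilated by $r_{E_8}$ is trivial. It is this step — not the reductions or the classical and lower exceptional cases above — that I expect to be decisive, and it is why the conjecture in full generality is still out of reach.
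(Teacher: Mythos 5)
You were asked to prove a statement that the paper itself records as a \emph{conjecture}: Serre's Conjecture II over an arbitrary field of dimension $\leq 2$ is open, the paper does not claim a proof of it, and neither do you --- as you candidly concede in your last paragraph. So the verdict is that your proposal is not a proof but a broadly accurate survey of the known reductions, and the gap you name ($E_8$) is genuine. What the paper actually proves is Theorem \ref{thm Serre II}: the conjecture for the special class of complete discretely valued fields of dimension $2$. There the $E_8$ case is closed by two arguments that are unavailable over a general field: in characteristic $0$, by \cite[Lem.~9.3.2]{GilleLNM} it suffices to show that the maximal solvable extension $L$ of $K$ of degree $2^a3^b5^c$ satisfies $\dim_\ell(L)\leq 1$ for $\ell\in\{2,3,5\}$, which is Proposition \ref{prop Bogomolov}, proved by an explicit Kummer-tower and residue computation that uses the complete discrete valuation in an essential way; in positive characteristic, by the Bruhat--Tits fixed-point method of \cite{BT3}, adapted to imperfect residue fields. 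Neither argument has any analogue over an arbitrary dimension $\leq 2$ field, which is exactly why the conjecture remains open.

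There is also a second, less visible, overstatement in your outline: your treatment of trialitarian $D_4$, $E_6$ and $E_7$. The kernel-of-the-Rost-invariant results of Chernousov, Garibaldi and Gille (\cite{ChernousovSerreII}, \cite{garibaldi}, \cite{garibaldinote}, \cite{GilleSerreIIqd}) apply to \emph{quasi-split} groups; for general forms of these types the only known route is \cite[Thm.~1.2(ii)~\&~(iii)]{CTGP}, which requires a period-index hypothesis on central simple algebras over finite extensions of the base field, and that hypothesis is not known over an arbitrary field of dimension $\leq 2$. This is precisely why the paper first establishes ``period equals index'' for its fields (Propositions \ref{prop csa1} and \ref{prop csa2}) and only then invokes \cite{CTGP}. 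So over a general field of dimension $\leq 2$ these cases are open as well: the conjecture does not, contrary to the impression your outline gives, reduce to $E_8$ alone.
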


This conjecture remains open, although a lot has been done in particular cases, whether specifying the type of the group (cf.~for instance \cite{BP,BFT,garibaldi,Invol,garibaldinote,ChernousovSerreII,GilleSerreIIqd}) or the field of definition (cf.~\cite{KneserH1LocalI, KneserH1LocalII,BT3,HarderHasseCar0I, HarderHasseCar0II,ChernousovE8,CTGP,dJHS,BenoistPeriodIndex}). For a survey on the topic, see the lecture notes \cite{GilleLNM}. More recently, in \cite{ILA}, it is proved that Serre's Conjecture II can be reduced to the case of countable characteristic $0$ fields of dimension $2$.\\

In this article, we settle the following case of Serre's Conjecture II:

\begin{theorem}[Theorem \ref{thm Serre II}]\label{thm Serre II intro}
Let $K$ be a complete discretely valued field of dimension $2$. Let $G$ be a semisimple simply connected $K$-group. Then $H^1(K,G)=1$.
\end{theorem}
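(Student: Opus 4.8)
The plan is to reduce to an absolutely almost simple simply connected group and then to run a type-by-type argument that converts the triviality of $H^1(K,G)$ into the cohomological and period--index statements already available. First I would use that $H^1(K,-)$ commutes with finite products and that any semisimple simply connected $K$-group is a product of groups of the form $R_{L/K}H$, with $L/K$ finite separable and $H$ absolutely almost simple simply connected; since a finite separable extension of $K$ is again a complete discretely valued field of dimension $2$, Shapiro's isomorphism $H^1(K,R_{L/K}H)\simeq H^1(L,H)$ reduces us to $G$ absolutely almost simple simply connected over $K$. The guiding principle throughout is that the residue field $\bar K$ has dimension $1$, so that $\cd_\ell(K)\le 2$ for every prime $\ell$ distinct from the residue characteristic $p$, and that the invariants detecting $H^1(K,G)$ live in cohomological degrees $\ge 3$.

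For inner forms of type $A$ one has $G=\SL_1(A)$ for a central simple $K$-algebra $A$, and the exact sequence $1\to \SL_1(A)\to \mathrm{GL}_1(A)\to \gm\to 1$ together with Hilbert 90 identifies $H^1(K,\SL_1(A))$ with $K^\times/\nrd(A^\times)$. Triviality is thus surjectivity of the reduced norm, which I would establish from the equality $\mathrm{per}(A)=\mathrm{ind}(A)$ of Propositions \ref{prop csa1} and \ref{prop csa2}, passing (after a restriction--corestriction over an extension adjoining suitable roots of unity) to the cyclic case where the reduced norm is computed by an honest field norm. The outer type $A$ groups and the classical groups of types $B_n,C_n,D_n$ reduce through the usual dictionary to the classification of quadratic and hermitian forms over $K$ and over the relevant \'etale or quaternion algebras; the inputs here are a bound on the $u$-invariant of $K$ and the vanishing of the higher-degree discriminant, Clifford and Hasse--Witt obstructions, again controlled by $\cd(K)\le 2$ and by the period--index statement for the underlying algebras with involution.

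For the exceptional groups $G_2,F_4,E_6,E_7,E_8$ the set $H^1(K,G)$ is governed by cohomological invariants with values in $H^d(K,\Q/\Z(d-1))$ for $d\ge 3$, and for the prime-to-$p$ part these vanish because $\cd_\ell(K)\le 2$, a class with trivial invariants being trivial by the known type-by-type analysis of Serre's Conjecture II in cohomological dimension $2$. The main obstacle, and the reason the earlier sections are indispensable, is the $p$-primary part at the residue characteristic: there ordinary Galois cohomology is not the correct receptacle and must be replaced by Kato's cohomology, that is by Milnor $\mathrm K$-theory modulo $p$. Here Theorem \ref{thm period index intro}, asserting that every class of $\K_2(K)/p$ is a symbol, is exactly the substitute for the vanishing of a degree-three $p$-primary invariant; it closes the type $A$ inner case and the exceptional cases whose obstruction sits in degree $3$, precisely where wild ramification would otherwise block the argument. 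Collecting the tame and wild contributions across all Tits types then yields $H^1(K,G)=1$.
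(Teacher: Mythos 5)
Your overall architecture (reduce to absolutely almost simple groups, then argue type by type using $\mathrm{scd}(K)\le 2$ and period--index) matches the paper's strategy for the classical and lower exceptional types, where the paper simply cites Bayer--Parimala and Berhuy--Frings--Tignol for types $A$, $B$, $C$, non-trialitarian $D$, $F_4$, $G_2$, and combines Proposition \ref{prop csa2} with \cite[Thm.~1.2]{CTGP} for trialitarian $D_4$, $E_6$, $E_7$. However, your execution of inner type $A$ is flawed as stated: cyclicity of an algebra does not make the reduced norm surjective (the Hamilton quaternions over $\R$ are cyclic and $\nrd$ has image $\R_{>0}$), so surjectivity of $\nrd$ cannot be extracted from ``period equals index'' by passing to the cyclic case; one needs the Merkurjev--Suslin theorem or, as in \cite[Thm.~1.2]{CTGP}, the combination of separable dimension $\le 2$ with period equals index over all finite extensions. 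Likewise, appealing to a bound on the $u$-invariant for the classical types is not the right input (fields of cohomological dimension $2$ can have arbitrarily large $u$-invariant, by Merkurjev); the actual input in Bayer--Parimala is the classification of forms by cohomological invariants.

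The genuine gap is $E_8$. You assert that a class in $H^1(K,G)$ with trivial cohomological invariants is trivial ``by the known type-by-type analysis of Serre's Conjecture II in cohomological dimension $2$''. For $E_8$ no such statement is known: injectivity of the Rost invariant (or of any known system of invariants) on $H^1(K,E_8)$ over a general field of dimension $2$ is precisely the open heart of Serre's Conjecture II, so your argument is circular exactly where the paper has to do new work. Relatedly, you misattribute the role of Theorem \ref{thm period index intro}: the paper's proof of Theorem \ref{thm Serre II} does not use the $\K_2(K)/p$ symbol statement at all (its inputs are Propositions \ref{prop csa1} and \ref{prop csa2}), and no mechanism is known by which that $\mathrm{K}$-theoretic statement would close the $E_8$ case. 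What the paper actually does for $E_8$ is: in characteristic $0$, prove that the maximal solvable extension of $K$ of degree $2^a3^b5^c$ has $\ell$-dimension $\le 1$ for $\ell\in\{2,3,5\}$ (Proposition \ref{prop Bogomolov}, via explicit towers $K(\sqrt[\ell^n]{y},\sqrt[\ell^n]{\pi})$ killing Brauer classes) and invoke \cite[Lem.~9.3.2]{GilleLNM}, which reduces $H^1(K,E_8)=1$ to exactly this property; in positive characteristic, run a Bruhat--Tits fixed-point and descent argument reducing to $H^1(\bar K,M)=1$ for a reductive quotient $M$ of the special fibre of a parahoric group scheme, which holds by Steinberg's theorem since $\mathrm{scd}(\bar K)\le 1$, with care taken for imperfect residue fields. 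Your proposal contains no substitute for either argument, and it does not address the wild, $p$-primary part in positive characteristic at all.
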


When the residue field $\bar K$ is perfect, this result was already known by work of Kneser in the $p$-adic case \cite{KneserH1LocalI, KneserH1LocalII} and Bruhat and Tits \cite[Cor.\ 3.15]{BT3}. The cases of groups of classical type and of types $F_4$ and $G_2$ were also already known by a result of Bayer and Parimala \cite{BP}. In \S \ref{sec Serre II}, we deduce the cases of trialitarian $D_4$, $E_6$ and $E_7$ from Propositions \ref{prop csa1} and \ref{prop csa2}, while we obtain the case of $E_8$ by two methods: when $K$ has characteristic $0$, we study the maximal solvable extension of $K$ of degree $2^a3^b5^c$ (cf. Proposition \ref{prop Bogomolov}) and use an argument due to the first author; when $K$ has positive characteristic, we follow the original arguments by Bruhat and Tits in \cite{BT3} and adapt them to the case of imperfect residue fields.

This result has several applications. The first one we should mention is that Theorem \ref{thm Serre II} permits to extend to the $E_8$-case the 
method of Kisin-Pappas on extension of torsors \cite[\S 1.4]{KisinPappas} and consequently several other results, see \S\ref{section_KP}. We also give applications to the study of torsors over curves in Corollaries \ref{cor_curve} and \ref{cor_line}. Finally, in the characteristic zero case, we generalize Kneser's computation of the Galois cohomology of an arbitrary semisimple group, see Theorem \ref{thm_BT+}.\\

\subsection*{Outline of the article}
In \S\ref{sec preliminaries}, we give some preliminaries on the various existing notions of dimension for fields, on central simple algebras and on Milnor $\mathrm{K}$-theory, recalling in particular an explicit description given by Kato of the $\mathrm{K}$-theory modulo $p$ for complete discretely valued fields of mixed characteristic with imperfect residue field. In \S\ref{sec CSA}, we settle the ``period equals index'' property for complete discretely valeud fields of dimension 2 in arbitrary characteristic. In \S\ref{sec K-th}, we work in mixed characteristic, we remove the hypothesis on the roots of unity in Kato's description mentioned before and we apply it in order to prove Theorem \ref{thm period index intro}. In order to do so, we state three ``absorption lemmas'' that help us to reduce sums of symbols to a single symbol (Lemmas \ref{lema absorcion i coprimo}, \ref{lema absorcion i divide a p} and \ref{lema absorcion i=0}). The proof of these lemmas is completely analogous to computations previously carried by Saltman and thus are given in an appendix to this article. In \S\ref{sec positive char}, we consider the case where $K$ has positive characteristic and give some partial results in this direction. Finally, in \S\ref{sec Serre II}, we prove Theorem \ref{thm Serre II intro} separating the cases of characteristic $0$ and positive characteristic. We immediately deduce several applications, which are scattered through \S\ref{sec Serre II} and \S\ref{section_KP}.

\subsection*{Acknowledgements}

The authors would like to thank Skip Garibaldi for his suggestions that allowed to improve the exposition, as well as Georgios Pappas, Raman Parimala and Suresh Venapally for their interest.

The first author was supported by the project ``Group schemes, root systems, and related representations'' founded by the European Union - NextGenerationEU through Romania's National Recovery and Resilience Plan (PNRR) call no.~PNRR-III-C9-2023-I8, Project CF159/31.07.2023, and coordinated by the Ministry of Research, Innovation and Digitalization (MCID)
of Romania. The third author's research was partially supported by ANID via FONDECYT Grant 1240001.

\section{Notations and Preliminaries}\label{sec preliminaries}

The following notations concerning complete discretely valued fields will be used throughout the whole article, except in sections \S\S\ref{sec prel cd}--\ref{sec prel K-th}, where we recall some results over arbitrary fields:
\begin{itemize}
    \item[\textbullet] $K$ stands for a field that is complete with respect to a discrete valuation $v_K$;
    \item[\textbullet] $\cal O_K$ stands for its integer ring and $\bar K$ for its residue field, which can be arbitrary; 
    \item[\textbullet] for an element $x\in\cal O_K$, its image in $\bar K$ is denoted $\bar x$.
\end{itemize}

We start by recalling some basic facts about the dimension of fields, central simple algebras and Milnor $\mathrm{K}$-theory.

\subsection{The dimension of a field}\label{sec prel cd}

The cohomological dimension $\mathrm{cd}(K)$ of a field $K$ is the cohomological dimension of its absolute Galois group. In other words, it is the smallest integer $\delta$ (or $\infty$ if such an integer does not exist) such that $H^n(K,M)=0$ for every $n>\delta$ and each finite Galois module $M$. This notion turns out not to be completely satisfactory for imperfect fields, in particular because a field with characteristic $p>0$ and whose Galois group is a pro-$p$-group has cohomological dimension at most $1$. Better definitions in that context were provided by Kato in \cite{Kato} and by the first author in \cite{gille}.

\begin{definition}\label{def cohdim}
Let $K$ be any field and $p$ a prime number. 
\begin{itemize}
\item[(i)] Assume that the characteristic of $K$ is different from $p$. The \emph{$p$-dimension} $\mathrm{dim}_{p}(K)$ and the \emph{separable $p$-dimension} $\mathrm{scd}_{p}(K)$ of $K$ are both equal to the $p$-cohomological dimension of the absolute Galois group of $K$.
\item[(ii)] Assume now that $K$ has characteristic $p$. Let $\Omega^i_K$ be the $i$-th exterior product over $K$ of the absolute differential module $\Omega^1_{K/\mathbb{Z}}$ and let $H^{i+1}_p(K)$ be the cokernel of the morphism $\mathfrak{p}^i_K\,:\, \Omega^i_K \to \Omega^i_K/d(\Omega^{i-1}_K)$ defined by
\[x \frac{dy_1}{y_1} \wedge ... \wedge \frac{dy_i}{y_i} \mapsto (x^p-x) \frac{dy_1}{y_1} \wedge ... \wedge \frac{dy_i}{y_i} \mod d(\Omega^{i-1}_K),\]
for $x\in K$ and $y_1,...,y_i \in K^{\times}$. The \emph{$p$-dimension} $\mathrm{dim}_{p}(K)$ of $K$ is the smallest integer $\delta$ (or $\infty$ if such an integer does not exist) such that $[K:K^p]\leq p^\delta$ and $H^{\delta+1}_p(L)=0$ for all finite extensions $L$ of $K$. The \emph{separable $p$-dimension} $\mathrm{scd}_{p}(K)$ of $K$ is the smallest integer $i$ (or $\infty$ if such an integer does not exist) such that $H^{i+1}_p(L)=0$ for all finite separable extensions $L$ of $K$.
\item[(iii)] The \emph{dimension} $\mathrm{dim}(K)$ of $K$ is the supremum of all the $\mathrm{dim}_{\ell}(K)$'s when $\ell$ runs through all prime numbers. The \emph{separable dimension} $\mathrm{scd}(K)$ of $K$ is the supremum of all the $\mathrm{scd}_{\ell}(K)$'s when $\ell$ runs through all prime numbers. 
\end{itemize}
Note that, for any prime number $\ell$, we have $\mathrm{scd}_\ell(K)\leq \dim_\ell(K)$, and hence $\mathrm{scd}(K)\leq \dim(K)$.
\end{definition}

For the reader's convenience, we briefly recall the behaviour of these different notions of dimension.

\begin{lemma}\label{lem_skip} 
Let $K$ be a field.
\begin{enumerate}
    \item[(i)] A field $K$ has dimension $0$ if, and only if, $K$ is algebraically closed.
    \item[(ii)] Let $K$ be a field with $[K:K^p]\leq p$ where $p$ is the characteristic exponent. Then $K$ has dimension at most $1$ if, and only if, $\Br(L)=0$ for each finite extension $L/K$.
\end{enumerate}
\end{lemma}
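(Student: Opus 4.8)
The plan is to reduce both statements to the prime-by-prime description $\dim(K)=\sup_\ell \dim_\ell(K)$ and to treat separately the primes $\ell$ different from the characteristic exponent $p$, where $\dim_\ell(K)=\cd_\ell(K)$ is a genuine cohomological dimension, and the prime $\ell=p$ itself, where the differential-forms groups $H^{i+1}_p$ intervene. I will freely use the Artin--Schreier identification $H^1_p(L)=L/\wp(L)=H^1(L,\Z/p)$ (the case $i=0$ of the definition, where $\wp(x)=x^p-x$) and Kato's identification $H^2_p(L)\cong \Br(L)[p]$ in characteristic $p$.

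For (i), the implication $(\Leftarrow)$ is immediate: an algebraically closed $K$ is separably closed, so $\Gal(K^\sep/K)=1$ and $\cd_\ell(K)=0$ for every $\ell\neq p$; and it is perfect, so $[K:K^p]=1$, while the Artin--Schreier map is surjective (the polynomial $X^p-X-a$ splits), giving $H^1_p(K)=0$ and hence $\dim_p(K)=0$. For $(\Rightarrow)$, assume $\dim(K)=0$. The vanishing $\cd_\ell(K)=0$ for all $\ell\neq p$ forces every Sylow pro-$\ell$ subgroup of $G_K=\Gal(K^\sep/K)$ to be trivial, so $G_K$ is a pro-$p$-group; and $\dim_p(K)=0$ gives $[K:K^p]=1$ together with $H^1(G_K,\Z/p)=H^1_p(K)=0$. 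Since a nontrivial pro-$p$-group surjects continuously onto $\Z/p$ (its Frattini quotient is a nonzero $\F_p$-vector space), we conclude $G_K=1$; thus $K$ is separably closed and perfect, hence algebraically closed.

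For (ii), I again argue prime by prime, where now $p$ denotes the characteristic exponent (so in characteristic $0$ one has $p=1$, the hypothesis $[K:K^p]\le p$ is vacuous, and only the primes $\ell\neq p$ occur). For $\ell\neq p$, Serre's criterion gives $\cd_\ell(K)\le 1$ if and only if $\Br(L)\{\ell\}=0$ for every finite \emph{separable} extension $L/K$; since a purely inseparable extension induces an isomorphism on the prime-to-$p$ part of the Brauer group, this is equivalent to $\Br(L)\{\ell\}=0$ for \emph{all} finite $L/K$. For $\ell=p$, the assumption $[K:K^p]\le p=p^1$ is exactly the first requirement for $\dim_p(K)\le 1$, and (since $[L:L^p]=[K:K^p]$ for every finite $L/K$, so the definition of $\dim_p(L)$ genuinely applies there) unwinding the definition shows that $\dim_p(K)\le 1$ holds if and only if $H^2_p(L)=0$ for all finite $L/K$. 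By Kato's identification the latter means $\Br(L)[p]=0$, equivalently $\Br(L)\{p\}=0$, for all such $L$. Collecting the two cases, $\dim(K)\le 1$ is equivalent to $\Br(L)\{\ell\}=0$ for all primes $\ell$ and all finite $L/K$, that is, to $\Br(L)=0$ for every finite extension $L/K$.

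I expect the main obstacle to lie entirely in the characteristic-$p$ part of (ii): one must correctly invoke Kato's isomorphism $H^2_p(L)\cong \Br(L)[p]$ and check that the bound $[K:K^p]\le p$ propagates to all finite extensions $L$, and one must carefully bridge between finite separable extensions---where Serre's cohomological criterion is stated and where the separable variant $\mathrm{scd}_\ell$ naturally lives---and arbitrary finite extensions, using the insensitivity of prime-to-$p$ étale cohomology to purely inseparable base change. The prime-to-$p$ input and the pro-$p$ Frattini argument in (i) are, by contrast, standard.
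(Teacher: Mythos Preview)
Your proof is correct and follows essentially the same route as the paper: for (i) you reduce to showing $G_K$ is pro-$p$ and then kill it via $H^1_p(K)=H^1(K,\Z/p)=0$ (the paper cites Serre for this step, you spell out the Frattini argument), and for (ii) you treat $\ell\ne p$ via the Brauer-group criterion for $\cd_\ell\le 1$ and $\ell=p$ via Kato's identification $H^2_p(L)\cong\Br(L)[p]$. Your extra care about separable versus arbitrary finite extensions is not strictly needed (the reference the paper cites already gives the criterion for all finite extensions), but it does no harm.
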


\begin{proof} 
We denote by $p$ the characteristic exponent of $K$.
\begin{enumerate}
    \item[(i)] If $K$ is algebraically closed, then it is straightforward that $\mathrm{dim}(K)=0$. Conversely assume that $\mathrm{dim}(K)=0$. Then $\mathrm{dim}_\ell(K)=0$ for each $\ell \neq p$, and so the pro-$\ell$-Sylow subgroups of $\Gal(K^\sep/K)$ are trivial. It follows that $\Gal(K^\sep/K)$ is a pro-$p$-group. The case $p=1$ is thus finished. Assuming $p>1$, we have $[K:K^p]= 1$ so that $K$ is perfect. Furthermore we have
    \[0=H^1_p(K)=K/\{x^p-x\mid x \in K\}=H^1(\Gal(K^\sep/K),\Z/p\Z),\]
    so that $\Gal(K^\sep/K)=1$ (see \cite[I, \S.4.1, Prop.~21]{SerreCohGal}). The field $K$ is perfect and is separably closed hence is algebraically closed.
\item[(ii)] According to \cite[Thm. 6.1.8]{GS}, for $\ell \neq p$, the field $K$ has $\ell$-dimension at most $1$ if, and only if, $\Br(L)[\ell]=0$ for each finite extension $L/K$. It remains to check the same statement when $\ell=p>1$. This follows from the identification $\Br(L)[p]=H^2_p(L)$, cf \cite[Thm. 9.2.4]{GS}.
\end{enumerate}
\end{proof}

\begin{example}\label{examples_dim}
\phantom{a}
\begin{enumerate}[(a)]
\item Assume that $K$ is a $C_1$ field, that is, a field over which any projective hypersurface in $\mathbb{P}^n_K$ of degree $d \leq n$ has a rational point. If $K$ is not algebraically closed,  then $\mathrm{dim}(K) = 1$.
Indeed, according to \cite[Prop.\ 6.2.3]{GS},
 we  have $\dim_\ell(K) \leq 1$
 for each $\ell \not =p$ and also $\Br(L)=0$
 for each finite field extension $L$ of $K$. This shows that $\mathrm{dim}(K) \leq 1$ when $p=1$. 
 Now, if $p>1$, we have $H^2_p(L)= \Br(L)[p]=0$ for each finite field extension $L$ of $K$ by \cite[Thm. 9.2.4]{GS} and $[K:K^p] \leq p$  by \cite[II, \S3.2, Cor.~to Prop.~8] {SerreCohGal}, so that $\mathrm{dim}_p(K) \leq 1$.
 Since $K$ is not algebraically closed, we conclude that $\mathrm{dim}(K)= 1$.
\item By taking into account respectively theorems
by Chevalley, Tsen and Lang (see \cite[II, \S3.3]{SerreCohGal}), we have $\mathrm{dim}(K)=1$ in the following cases:
\begin{itemize}
\item $K$ is a finite field;
\item $K=K_0(C)$ is the function field of an algebraic curve $C$ over an algebraically closed field $K_0$;
\item $K$ is a henselian discrete valued field such that its completion $\widehat K$ is separable over $K$ and whose residue field $\bar K$ is algebraically closed.
\end{itemize}
\item There are more  examples of fields of dimension $\leq 1$, see for example the field constructed 
by Colliot-Th\'el\`ene and Madore \cite[Thm.~1.1]{CTM}. 
\item Assume that $K$ is separably closed
of characteristic $p\geq 2$ and that $[K:K^p]=p$.
Then $\mathrm{dim}(K)=1$. Indeed each finite field extension $L$ of 
$K$ is separably closed so that $\Br(L)=0$.
\item A very important feature of Definition \ref{def cohdim} is that, for any complete discretely valued field $K$ with residue field $\bar K$, we have $\dim(K)=\dim(\bar K)+1$, cf.~\cite[Cor.~to Thm.~3]{Kato}.
\end{enumerate}
\end{example}

\subsection{Central simple algebras}\label{sec csa0}

Let $K$ be a field. According to Wedderburn's Theorem, a central simple algebra over $K$ is always of the form $\mathcal{M}_n(D)$ for some $n \geq 1$ and some central division algebra $D$. Two central simple algebras are said to be \emph{Brauer equivalent} if the underlying central division algebras are isomorphic. Central simple algebras up to Brauer equivalence form a group with respect to the tensor product, called the \emph{Brauer group} of $K$ and denoted $\Br(K)$. The Brauer group $\Br(K)$ coincides with the Galois cohomology group $H^2(K, \mathbb{G}_\mathrm{m})$ provided that $K$ is perfect. All these considerations allow to define two invariants associated to a given central simple algebra $A$ over $K$:
\begin{itemize}
    \item the \emph{period} of $A$ is the order of its class in $\Br(K)$;
    \item if $D$ is the underlying central division algebra of $A$, the dimension of $D$ is a perfect square and its square root is called the \emph{index} of $A$.
\end{itemize}
As one can observe, both notions are actually well-defined for the class of $A$ in the Brauer group, so that one can talk about period and index of a Brauer class instead of a central simple algebra.\\

To a character $\chi \in \mathrm{Hom}_\mathrm{cont}(\Gal(K^\sep/K),\mathbb{Q}/\mathbb{Z})$ and an element $b \in K^\times$ one can naturally associate a central simple algebra denoted $(\chi,b)$ (cf \cite[\S 2.5]{GS})). In Galois cohomology terms this corresponds to the cup-product of $\chi \in H^1(K,\mathbb{Q}/\mathbb{Z})=H^2(K,\mathbb{Z})$ and $b\in H^0(K,\mathbb{G}_\mathrm{m})$. The algebra $(\chi,b)$ can be easily described when the character $\chi$ has order $m$ with $m$ coprime to the characteristic exponent of $K$ and $K$ contains all $m$-th roots of unity. In that case, one can find an element $a \in K^\times$ such that $\ker (\chi)= \Gal (K(\sqrt[m]{a})/K)$ and a primitive $m$-th root of unity $\zeta\in K$ such that $\chi(\sqrt[m]{a})=\zeta\sqrt[m]{a}$. The central simple algebra $(\chi,b)$ can then be described by generators and relations as
$$ \langle x,y : x^m=a, y^m=b, xy=\zeta yx \rangle .$$ One often writes $(a,b)_\zeta$ instead of $(\chi,b)$.\\

Central simple algebras of the form $(\chi,b)$ are usually called \emph{cyclic algebras}, and period is always equal to index for such algebras. 

\subsection{Milnor $\mathrm{K}$-theory}\label{sec prel K-th}
Let $K$ be any field. The Milnor $\mathrm{K}$-theory groups of $K$ are defined and studied in full generality in \cite[Ch.~7]{GS} (and unless otherwise stated, every statement presented in this subsection comes from there). In this article, we will only use the first three Milnor $\mathrm{K}$-theory groups, which can be explicitly described as follows:
\begin{gather*}
    \K_0(K):=\mathbb{Z}, \quad \K_1(K):=K^\times\\
    \K_2(K):=(K^\times \otimes_{\mathbb{Z}} K^\times)/\left\langle x\otimes (1-x): x \in K\smallsetminus \{0,1\} \right\rangle .
\end{gather*}
For $x,y \in K^{\times}$, the symbol $\{x,y\}$ denotes the class of $x\otimes y$ in $\K_2(K)$.

When $L$ is a finite extension of $K$ and $q \in \{0,1,2\}$, we have a norm homomorphism
\[N_{L/K}: \K_q(L) \rightarrow \K_q(K),\]
satisfying the following properties:
\begin{itemize}
\item[$\bullet$] For $q=0$, the map $N_{L/K}: \K_0(L) \rightarrow \K_0(K)$ is given by multiplication by $[L:K]$.
\item[$\bullet$] For $q=1$, the map $N_{L/K}: \K_1(L) \rightarrow \K_1(K)$ coincides with the usual norm $L^{\times} \rightarrow K^{\times}$.
\item[$\bullet$] The equality $N_{L/K}(\{x,y\})=\{x,N_{L/K}(y)\}$ holds for $x \in \K_1(K)$ and $y\in \K_1(L)$. 
\end{itemize}

Milnor $\mathrm{K}$-theory is also endowed with residue maps. Indeed, when $K$ is a complete discretely valued field with ring of integers $\cal O_{K}$ and residue field $\bar K$, there exists a unique residue morphism
\[\partial: \K_{2}(K) \rightarrow \K_{1}(\bar K),\]
such that, for any uniformizer $\pi$ and for each unit $u\in \cal O_{K}^{\times}$, one has
\[\partial (\{\pi, u\})=\bar u.\]
The kernel of $\partial$ is the subgroup $U_{2}(K)$ of $\K_{2}(K)$ generated by symbols of the form $\{x,y\}$ with $x,y \in \cal O_{K}^\times$. We denote by $U_{2}^i(K)$ the subgroup of $\K_{2}(K)$ generated by those symbols that lie in $U_2(K)$ and that are of the form $\{x,y\}$ with $x \in 1+\pi^i\cal O_{K}$ and $y \in K^{\times}$.\\

The residue map turns out to be very useful to understand the group $\K_2(K)/p$ when $p$ is a prime number different from the characteristic of $\bar K$. When $p$ is equal to $\mathrm{char}(\bar K)$ and $K$ contains a primitive $p$-th root of unity $\zeta_p$, one can use instead a precise description of $\K_2(K)/p$ due to Kato. For that purpose, we set $\kk_2(K):=\K_2(K)/p$ and $u_2^i(K):=U_2^i(K)/p$, and we define $\epsilon_{K}:=\frac{v_{K}(p)p}{p-1}$. Then, given a uniformizer $\pi\in K$, we have the following isomorphisms (cf.~\cite[Thm.~2]{Kato}): 
\begin{align}
\label{eq Kato i original}
\rho^{2}_0 : \kk_{2}(\bar K)\oplus \kk_1(\bar K) &\to \kk_{2}(K)/ u^1_{2}(K), \\
\nonumber\left(\{\bar x,\bar y\},0\right) &\mapsto \{x,y\}, \\
\nonumber\left(0,\{\bar y\}\right) &\mapsto \{\pi, y\}; \\
\intertext{for every $0<i<\epsilon_{K}$ such that $(i,p)=1$ and any $\omega$ with $v_K(\omega)=i$,}
\label{eq Kato ii original}
\rho^{2}_i: \Omega^{1}_{\bar K} &\to u^i_{2}(K)/ u^{i+1}_{2}(K), \\
\nonumber \bar x \,\frac{d\bar y}{\bar y} &\mapsto \{1+\omega x, y\}; \\
\intertext{for every $0<i<\epsilon_{K}$ such that $p|i$ and any $\varpi$ with $v_K(\varpi)=i/p$; }
\label{eq Kato iii original}
\rho^{2}_i : \Omega^{1}_{\bar K}/\ker(d) \oplus \Omega^{0}_{\bar K}/\ker(d) &\to  u^i_{2}( K)/ u^{i+1}_{2}( K), \\
\nonumber\left( \bar x \,\frac{d\bar y}{\bar y}, 0 \right) &\mapsto  \{1+\varpi^p x, y\}, \\
\nonumber\left( 0, \bar x\right) &\mapsto  \{\pi,1+\varpi^p x\}; \\
\intertext{for $i=\epsilon_{K}$,}
\label{eq Kato iv original}
\rho^{2}_{i} : H_p^{2}(\bar K)\oplus H_p^{1}(\bar K) &\to u^{i}_{2}( K), \\
\nonumber\left( \bar x \,\frac{d\bar y}{\bar y}, 0 \right) &\mapsto  \{1+(\zeta_p-1)^p x, y\}, \\
\nonumber\left( 0, \bar x \right) &\mapsto  \{\pi,1+(\zeta_p-1)^p x\}.
\end{align}

\section{Period and index of central simple algebras}\label{sec CSA}

In this section, we study central simple algebras over complete discretely valued fields of dimension $2$. The following proposition can be easily derived from results due to Saltman (private notes exposed partially in \cite{LieblichParimalaSuresh}):

\begin{proposition}\label{prop csa1}
Let $K$ be a complete discretely valued field of characteristic $0$ with residue field $\bar K$ of characteristic exponent $p$ and of dimension $1$. If $\zeta_p \in K$, then every central simple algebra over $K$ with prime period is cyclic. In particular, period equals index for every central simple $K$-algebra.
\end{proposition}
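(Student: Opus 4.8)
The plan is to reduce the ``period equals index'' statement to the claim about prime period by a standard primary decomposition, and to prove the cyclicity claim by combining Kato's description of $\kk_2(K)$ (the isomorphisms \eqref{eq Kato i original}--\eqref{eq Kato iv original}) with the Bloch--Kato isomorphism $\kk_2(K)\xrightarrow{\sim}\Br(K)[p]$, which is available because $\zeta_p\in K$. Concretely, a central simple algebra $A$ of period $p$ corresponds to a class in $\kk_2(K)$, and showing that $A$ is cyclic amounts to showing that this class is represented by a single symbol $\{x,y\}$, since then $A\cong (x,y)_{\zeta_p}$, which is cyclic, and cyclic algebras satisfy $\mathrm{per}=\mathrm{ind}$.

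First I would treat the tame case $p\neq\car(\bar K)$ separately: here the residue map $\partial:\K_2(K)\to\K_1(\bar K)=\bar K^\times$ together with the hypothesis $\dim(\bar K)=1$ (equivalently $\mathrm{cd}_p(\bar K)\le 1$, so $\K_2(\bar K)/p=0$) forces every class modulo $p$ to be of the form $\{\pi,u\}$ up to a symbol with unit entries, and the latter vanishes modulo $p$; this is the classical argument and gives a single symbol. The substantive case is $p=\car(\bar K)$, where one must analyze a general element of $\kk_2(K)$ through its filtration by the $u_2^i(K)$. The strategy, following Saltman, is to take an arbitrary class and use the graded pieces identified by $\rho^2_i$ to successively absorb summands into a single symbol, working down the filtration from $i=\epsilon_K$ and using repeatedly that $\dim(\bar K)=1$ controls the relevant cohomology/differential groups $H^2_p(\bar K)$, $\Omega^1_{\bar K}/\ker(d)$, etc. The three absorption lemmas (\ref{lema absorcion i coprimo}, \ref{lema absorcion i divide a p}, \ref{lema absorcion i=0}) mentioned in the introduction are exactly the technical inputs that let one rewrite a sum of two symbols arising from a given graded layer as one symbol, and I would invoke them as black boxes.

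The main obstacle I anticipate is precisely this absorption step in the wild case, and in particular keeping track of how a symbol produced at one level of the filtration interacts with lower levels — that is, ensuring that when one merges two symbols into one via a Steinberg-type relation, the correction terms genuinely lie deeper in the filtration $U_2^{i+1}(K)$ so that an induction on $i$ terminates. One must also handle the top piece $i=\epsilon_K$ carefully, where \eqref{eq Kato iv original} involves $H^2_p(\bar K)$ and $H^1_p(\bar K)$; here the hypothesis $\dim(\bar K)=1$ is what guarantees $H^2_p(\bar K)=0$ (via $\Br(L)[p]=H^2_p(L)$ for finite $L/\bar K$, as in Lemma \ref{lem_skip}(ii)), eliminating the obstruction. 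Assembling these reductions, every class of period $p$ is a single symbol, hence cyclic; the general ``$\mathrm{per}=\mathrm{ind}$'' statement then follows by decomposing an arbitrary class into its $p$-primary components, treating each prime power by passing to the appropriate cyclic extension so that the prime-period case applies, and using the standard multiplicativity of period and index over coprime factors.
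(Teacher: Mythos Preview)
Your wild-case strategy (period $\ell=p=\car(\bar K)$) is sound, and is essentially the content that the paper packages differently: the paper's proof simply quotes \cite[Prop.~1.8]{LieblichParimalaSuresh}, which produces a uniformizer $\pi$ and $a\in K$ with $\alpha-(a,\pi)_{\zeta_p}\in\Br(K^{\unr}/K)$, and then combines this with the residue isomorphism $\Br(K^{\unr}/K)\{p\}\cong H^1(\bar K,\Q_p/\Z_p)$ (available since $\Br(\bar K)=0$) to write $\alpha=(ab,\pi)_{\zeta_p}$. What you propose---running Kato's filtration and the absorption lemmas to reach a single symbol, then translating via Bloch--Kato---is precisely the argument \emph{behind} that cited proposition, and is what the paper later unfolds in \S\ref{sec K-th} and the Appendix for the sake of Theorem~\ref{thm period index}. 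So for $\ell=p$ your route is correct but reproves what the paper cites as a black box; note also that the induction runs \emph{up} the filtration (pushing the error from $u_2^{j}$ into $u_2^{j+1}$ until $j=\epsilon_K$), not down from $\epsilon_K$.

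There is, however, a genuine gap in your treatment of the other primes. The proposition asserts cyclicity at \emph{every} prime period $\ell$, while the hypothesis $\zeta_p\in K$ concerns only the residue characteristic $p$; your write-up conflates the two. Your residue argument does show that every class in $\K_2(K)/\ell$ is a single symbol, but the passage from ``symbol in $\K_2(K)/\ell$'' to ``cyclic algebra in $\Br(K)[\ell]$'' via the norm-residue map lands in $H^2(K,\mu_\ell^{\otimes 2})$, and this identifies with $\Br(K)[\ell]$ (with symbols going to cyclic algebras $(a,b)_{\zeta_\ell}$) only when $\zeta_\ell\in K$---which is not assumed for $\ell\neq p$. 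The paper avoids this entirely by staying in the Brauer group: since $\Br(\bar K)=0$ and $\Br(K^{\unr}/K)\{\ell\}=\Br(K)\{\ell\}$ for $\ell\neq p$, one gets $\Br(K)[\ell]\cong H^1(\bar K,\Z/\ell)$ directly, so every class is of the form $(\chi,\pi)$ for an unramified character $\chi$, cyclic by definition with no root-of-unity hypothesis. Finally, your reduction of ``period${}={}$index'' from arbitrary period to prime period by ``passing to the appropriate cyclic extension'' is not quite right; the paper invokes \cite[Thm.~1.1(iv)]{CTGP}, which is the correct (and non-obvious) input here.
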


\begin{proof}
We denote by $K^\mathrm{unr}$ the maximal unramified extension of $K$. Let $\ell$ be a prime number and take $\alpha \in \Br(K)[\ell]$. We need to prove that $\alpha$ can be represented by a cyclic algebra.

By \cite[XII, \S3, Exer.~3]{SerreCorpsLocaux} we have an exact sequence
\[0\to \Br(\bar K)\{\ell\}\to \Br(K^\mathrm{unr}/K)\{\ell\}\to H^1(\bar K, \Q_\ell/\Z_\ell)\to 0.\]
Moreover, if $\ell\neq p$, then $\Br(K^\mathrm{unr}/K)\{\ell\}=\Br(K)\{\ell\}$. Now $\Br(\bar K)=0$ because $\dim(\bar K)=1$. We therefore obtain a residue isomorphism $\Br(K^\mathrm{unr}/K)\{\ell\} \cong H^1(\bar K, \Q_\ell/\Z_\ell)$ that shows that any element in $\Br(K^\mathrm{unr}/K)\{\ell\}$ is of the form $(\chi,\pi)$ for an arbitrary uniformizer $\pi\in K$ and some unramified character $\chi\in H^1(K,\Q_\ell/\Z_\ell)$. This proves the claim if $\ell\neq p$. If $\ell=p$, according to the proof of \cite[Prop.~1.8]{LieblichParimalaSuresh}, there is a uniformizer $\pi$ of $K$ and an element $a \in K$ such that $\alpha-(a,\pi)_{\zeta_p}\in \Br(K^\mathrm{unr}/K)$. In particular, we can write
\[\alpha=(a,\pi)_{\zeta_p}+(\chi,\pi)=(a,\pi)_{\zeta_p}+(b,\pi)_{\zeta_p}=(ab,\pi)_{\zeta_p},\]
for some $b\in K$ and hence $\alpha$ is represented by a cyclic algebra, as wished.

Now, the ``period equals index'' property for all central simple $K$-algebras follows because it can be checked by considering only algebras with $\ell$-primary period for every prime number $\ell$ and because the case of algebras with $\ell$-primary period can itself be reduced to that of algebras of period $\ell$ according to \cite[Thm.~1.1.(iv)]{CTGP}.
\end{proof}

Thanks to work of Kato, the result stating that period equals index for central simple $K$-algebras can in fact be generalized to the positive characteristic case in the following way:

\begin{proposition}\label{prop csa2}
Let $K$ be a complete discretely valued field with residue field $\bar K$ of dimension $1$. Then period equals index for every central simple $K$-algebra.
\end{proposition}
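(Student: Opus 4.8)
The plan is to follow the strategy of Proposition \ref{prop csa1} and reduce, once again, to a statement about Brauer classes of prime period. Since the ``period equals index'' property can be verified separately on the $\ell$-primary part of the Brauer group for each prime $\ell$, and since for a fixed $\ell$ the case of $\ell$-primary classes reduces to that of classes of period $\ell$ by \cite[Thm.~1.1.(iv)]{CTGP}, it suffices to fix a prime $\ell$ and a class $\alpha \in \Br(K)[\ell]$ and to show that $\mathrm{ind}(\alpha) = \ell$. Write $p$ for the characteristic exponent of $\bar K$. When $\ell \neq p$, the argument is identical to the one in Proposition \ref{prop csa1}: the exact sequence of \cite[XII, \S3, Exer.~3]{SerreCorpsLocaux} together with $\Br(\bar K)\{\ell\} = 0$ (which holds because $\dim(\bar K) = 1$) yields a residue isomorphism $\Br(K)\{\ell\} \cong H^1(\bar K, \Q_\ell/\Z_\ell)$, so that $\alpha = (\chi, \pi)$ for a uniformizer $\pi$ and a character $\chi$ of order $\ell$; such a cyclic algebra has degree $\ell$, whence $\mathrm{ind}(\alpha) = \ell$.

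It remains to treat the case $\ell = p > 1$, which splits according to the characteristic of $K$. If $K$ has characteristic $0$, I would reduce to Proposition \ref{prop csa1} by passing to $K' = K(\zeta_p)$. This is again a complete discretely valued field, its residue field $\bar{K'}$ is a finite extension of $\bar K$ and hence still has dimension at most $1$, and it satisfies $\zeta_p \in K'$, so period equals index over $K'$. Since $[K':K]$ divides $p-1$ and is therefore prime to $p$, while $\mathrm{ind}(\alpha)$ and $\mathrm{per}(\alpha)$ are powers of $p$, the restriction--corestriction estimates $\mathrm{ind}(\alpha_{K'}) \mid \mathrm{ind}(\alpha) \mid [K':K]\,\mathrm{ind}(\alpha_{K'})$ (and the analogous ones for the period, obtained from $\cores \circ \res = [K':K]$) force $\mathrm{ind}(\alpha) = \mathrm{ind}(\alpha_{K'})$ and $\mathrm{per}(\alpha) = \mathrm{per}(\alpha_{K'})$; the equality over $K'$ then descends to $K$.

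The genuinely new, and hardest, case is equal characteristic $p$, where the reduction above is unavailable because $\zeta_p = 1$. Here I would invoke Kato's structure theorem for $H^2_p(K) = \Br(K)[p]$: for a complete discretely valued field of equal characteristic $p$ the residue map fits into a split exact sequence
\[0 \to H^2_p(\bar K) \to H^2_p(K) \xrightarrow{\ \partial\ } H^1_p(\bar K) \to 0.\]
Because $\dim(\bar K) = 1$ forces $\dim_p(\bar K) \leq 1$ and hence $H^2_p(\bar K) = \Br(\bar K)[p] = 0$ (using the identification of \cite[Thm.~9.2.4]{GS}), the residue map is an isomorphism $\Br(K)[p] \cong H^1_p(\bar K)$. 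Its inverse sends an Artin--Schreier class $\bar a \in H^1_p(\bar K) = \bar K/\wp(\bar K)$ to the cyclic $p$-algebra $(\chi,\pi)$ attached to the unramified $\Z/p$-extension it determines together with the uniformizer $\pi$; this algebra has degree $p$, so every class of $\Br(K)[p]$ has index $p$, which equals its period. The main obstacle is precisely to have the correct form of Kato's description at hand in this imperfect-residue-field setting and to match the vanishing $H^2_p(\bar K)=0$ against the hypothesis $\dim(\bar K)=1$; granting these, the equality of period and index follows as above.
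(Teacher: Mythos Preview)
Your argument is correct, but the route differs from the paper's in the case $\ell=p$. The paper does not split according to the characteristic of $K$; instead it distinguishes whether the class is split over $K^{\mathrm{unr}}$ or not. If it is, the residue isomorphism $\Br(K^{\mathrm{unr}}/K)\{\ell\}\cong H^1(\bar K,\Q_\ell/\Z_\ell)$ (valid for all $\ell$, since $\Br(\bar K)=0$) writes the class as $(\chi,\pi)$. If it is not, Serre's \cite[XII, \S1, Cor.~to Thm.~1]{SerreCorpsLocaux} forces $\bar K$ imperfect and $\ell=p$, whence $[\bar K:\bar K^p]=p$, and the paper then invokes \cite[\S4, Lemma~5]{Kato1979} directly to conclude that the index is $p$. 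This is shorter and characteristic-free. Your mixed-characteristic reduction to Proposition~\ref{prop csa1} via $K'=K(\zeta_p)$ is a nice alternative that avoids the citation to Kato's lemma in that case. In equal characteristic your use of Kato's split exact sequence $0\to H^2_p(\bar K)\to H^2_p(K)\to H^1_p(\bar K)\to 0$ actually yields a bit more than the paper's argument: it shows every class is cyclic of the form $(\chi,\pi)$ with $\chi$ unramified, so the ``not split over $K^{\mathrm{unr}}$'' case is in fact vacuous there. The trade-off is that you rely on the structural description of $H^2_p(K)$ in equal characteristic (also due to Kato, and indeed available), whereas the paper gets away with the single black-box Lemma~5 covering both characteristics at once.
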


\begin{proof}
As in the proof of Proposition \ref{prop csa1}, we can use \cite[Thm.~1.1.(iv)]{CTGP} to reduce to the case of a division $K$-algebra $A$ of prime period $\ell$ and we can get a residue isomorphism $\Br(K^\mathrm{unr}/K) \cong H^1(\bar K, \Q/\Z)$. If $A$ is split over the unramified closure $K^\mathrm{unr}$, it follows that $[A]$ is of the form $(\chi,\pi)$ for some unramified character $\chi \in H^1(K,\Q_\ell/\Z_\ell)$. Thus $A$ is isomorphic to a cyclic $K$-algebra of degree $\ell$ and in particular has index $\ell$.
  
It remains to deal with the case when $A \otimes_K K^\mathrm{unr}$ is not split. According to \cite[XII, \S1, Cor.~to Thm.~1]{SerreCorpsLocaux}, this can happen only in the case where $\bar K$ is imperfect and, again by \cite[XII, \S3, Exer.~3]{SerreCorpsLocaux}, only if $\ell=p$.
Since $\dim_p(\bar K) \leq 1$, we have $[\bar K:\bar K^p]=p$. In this case Kato has shown that $A$ has index $p$ (cf.~\cite[\S 4, Lemma 5]{Kato1979}).
\end{proof}

\section{Symbols and Milnor $K$-theory}\label{sec K-th}

In this section, we are interested in the second degree Milnor K-theory group of a complete discretely valued field $K$ of dimension $2$. The main theorem we prove generalizes the cyclicity result in Proposition \ref{prop csa1} to Milnor K-theory classes when $K$ is not anymore assumed to contain $p$-th roots of unity, and it can be stated as follows:

\begin{theorem}\label{thm period index}
Let $K$ be a complete discretely valued field of characteristic $0$ with residue field $\bar K$ of dimension $1$. Let $p$ be a prime number. Then every class in $\K_2(K)/p$ is represented by a symbol.
\end{theorem}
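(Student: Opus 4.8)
The plan is to treat separately the two cases according to whether the prime $p$ coincides with the residue characteristic. A preliminary observation feeds both cases: since $\dim(\bar K)=1$, one has $\kk_2(\bar K)=0$. Indeed, if $p\neq\car(\bar K)$ then $\dim_p(\bar K)$ equals the $p$-cohomological dimension of $\bar K$ and is at most $1$, so the Merkurjev--Suslin isomorphism gives $\kk_2(\bar K)\cong H^2(\bar K,\mu_p^{\otimes2})=0$; and if $p=\car(\bar K)$ then the Bloch--Kato--Gabber differential symbol embeds $\kk_2(\bar K)$ into $\Omega^2_{\bar K}$, which vanishes because $\dim_p(\bar K)\le1$ forces $[\bar K:\bar K^p]\le p$ and hence $\Omega^1_{\bar K}$ to be at most one-dimensional over $\bar K$. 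This vanishing is the engine that will collapse every contribution to a single symbol.

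In the tame case $p\neq\car(\bar K)$, I would argue directly with the residue map $\partial\colon\K_2(K)\to\K_1(\bar K)$ recalled in \S\ref{sec prel K-th}. It induces an isomorphism $\kk_2(K)/u_2(K)\xrightarrow{\sim}\kk_1(\bar K)=\bar K^\times/p$, whose inverse is $\{\bar y\}\mapsto\{\pi,y\}$ for a fixed uniformizer $\pi$; and since $p$ is invertible in $\bar K$ the specialization map identifies $u_2(K)=\ker\partial$ with $\kk_2(\bar K)=0$. Thus $\partial$ is itself an isomorphism and every class of $\kk_2(K)$ equals $\{\pi,y\}$ for some $y$, a single symbol. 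No hypothesis on roots of unity intervenes here.

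The substantial case is $p=\car(\bar K)$, that is, mixed characteristic. Here I would work with the filtration $\kk_2(K)\supset u_2^1(K)\supset\cdots\supset u_2^{\epsilon_K}(K)\supset u_2^{\epsilon_K+1}(K)=0$ and the description of its graded quotients in \eqref{eq Kato i original}--\eqref{eq Kato iv original}. Those isomorphisms are stated under the hypothesis $\zeta_p\in K$, which the theorem does not grant, so the first step is to establish the analogous description with that hypothesis removed (Proposition \ref{prop isom Kato}): the graded pieces remain governed by $\Omega^1_{\bar K}$, by $\Omega^1_{\bar K}/\ker(d)\oplus\Omega^0_{\bar K}/\ker(d)$, and by $H^2_p(\bar K)\oplus H^1_p(\bar K)$, only the normalisation of the top layer and of $\epsilon_K$ being affected. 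Granting this, the top quotient is $\kk_2(K)/u_2^1(K)\cong\kk_2(\bar K)\oplus\kk_1(\bar K)$, which by $\kk_2(\bar K)=0$ reduces to $\bar K^\times/p$; hence any $\alpha\in\kk_2(K)$ can be written $\alpha=\{\pi,y\}+\alpha_1$ with $\alpha_1\in u_2^1(K)$.

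From there I would descend the filtration by downward induction, absorbing one layer at a time. Assuming $\alpha=s+\alpha_i$ with $s$ a single symbol and $\alpha_i\in u_2^i(K)$, I absorb the image of $\alpha_i$ in $u_2^i(K)/u_2^{i+1}(K)$ into $s$, obtaining a single symbol $s'$ with $\alpha=s'+\alpha_{i+1}$ and $\alpha_{i+1}\in u_2^{i+1}(K)$. The three shapes of graded quotient --- $\Omega^1_{\bar K}$ when $(i,p)=1$, $\Omega^1_{\bar K}/\ker(d)\oplus\Omega^0_{\bar K}/\ker(d)$ when $p\mid i$, and $H^2_p(\bar K)\oplus H^1_p(\bar K)$ at $i=\epsilon_K$ --- are dealt with by the corresponding absorption moves (Lemmas \ref{lema absorcion i coprimo}, \ref{lema absorcion i divide a p} and \ref{lema absorcion i=0}), modelled on Saltman's computations; as the filtration terminates at $u_2^{\epsilon_K+1}(K)=0$, finitely many absorptions leave $\alpha$ equal to a single symbol. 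The quantitative input making every absorption possible is again $\dim_p(\bar K)\le1$: it keeps $\Omega^1_{\bar K}$ one-dimensional, so that perturbing the second entry of a symbol can match any prescribed class in a graded piece, and it forces $H^2_p(\bar K)=0$, so that the delicate top layer $i=\epsilon_K$ contributes only $H^1_p(\bar K)$, whose classes are already of the symbol shape $\{\pi,\cdot\}$. I expect the main obstacle to lie entirely in this mixed-characteristic case: first the bookkeeping required to shed the $\zeta_p$ hypothesis from Kato's isomorphisms, and then the absorption at the layers $p\mid i$ and $i=\epsilon_K$, where $d$ acquires a kernel and the cancellation amounts to solving Artin--Schreier-type equations over $\bar K$.
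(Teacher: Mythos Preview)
Your strategy matches the paper's: split into tame and wild cases, extend Kato's filtration isomorphisms without assuming $\zeta_p\in K$ (Proposition~\ref{prop isom Kato}), then absorb layer by layer via Lemmas~\ref{lema absorcion i coprimo}--\ref{lema absorcion i=0}. Two bookkeeping points in your description are inverted relative to the paper and would trip up a direct write-up. First, the three absorption lemmas are indexed by the level of the \emph{base symbol} $a$ (level $0$, level $i$ with $(i,p)=1$, or level $i$ with $p\mid i$), not by the type of layer being absorbed; each of them handles an arbitrary $b\in u_2^j(K)$. Accordingly, the induction does not begin at level $0$ unless $\alpha\notin u_2^1(K)$: one first locates the $i_0$ with $\alpha\in u_2^{i_0}(K)\smallsetminus u_2^{i_0+1}(K)$, writes $\alpha=a+b$ with $a$ a nondegenerate symbol at level $i_0$, and then applies repeatedly the \emph{single} absorption lemma determined by $i_0$ (its hypothesis $a\notin u_2^{i_0+1}(K)$ is what makes the absorption go through). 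Second, those lemmas only allow $j<\epsilon_K$, so the last layer is not another instance of them but a separate direct computation: one chooses the uniformizer appearing in isomorphism~\eqref{eq Kato iv} compatibly with $a$ so that $a$ and the residual $c\in u_2^{\epsilon_K}(K)$ share their second entry, and then multiplies the first entries. Incidentally, without $\zeta_p\in K$ the target of~\eqref{eq Kato iv} is $\bar K/(\bar K\cap\bar u\,\Phi(\bar M))$ rather than $H^1_p(\bar K)$; see Remark~\ref{rem H^1_p}.
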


The rest of this section is devoted to the proof of this theorem. Let us first deal with the easy case when $\bar K$ has characteristic $\neq p$. Under that assumption, since $p$ is invertible in $\bar K$, by \cite[Cor.~7.1.10]{GS}, we have the exact sequence
$$0 \rightarrow \kk_2(\bar K) \rightarrow \kk_2(K) \xrightarrow{\partial} \kk_1(\bar K) \rightarrow 0,$$
where the residue map $\partial: \kk_2(K) \rightarrow \kk_1(\bar K)$ sends a symbol of the form  $\{\pi,u\}\in \kk_2(K)$, with $\pi$ a uniformizer in $K$ and $u$ a unit in $\mathcal{O}_K$, to $\overline{u}\in \kk_1(\bar K)$. According to the Bloch-Kato conjecture, the group $\kk_2(\bar K)$ is trivial because $\bar K$ has dimension $1$. Hence $\partial$ is an isomorphism, and every element in $\kk_2(K)$ is a symbol of the form $\{\pi,u\}$.

\begin{remarque}\label{rem char not p}
This proof works just as well for $K$ of positive characteristic provided that it is different from $p$. We comment on the case of characteristic $p$ in \S\ref{sec positive char}.
\end{remarque}

Let us now move on to the case where $\mathrm{char}(\bar K)=p$. If $K$ contains all $p$-th roots of unity, Theorem \ref{thm period index} immediately boils down to Proposition \ref{prop csa1} since the Bloch-Kato conjecture provides an isomorphism $\kk_2(K)\cong \Br(K)[p]$ in which symbols correspond to cyclic algebras. We henceforth do not assume anymore that $K$ contains all $p$-th roots of unity. In that case, we a priori do not have Kato's isomorphisms \eqref{eq Kato i original}--\eqref{eq Kato iv original}. So we first need to extend them to that case.

\begin{proposition}\label{prop isom Kato}
Let $K$ be a complete discretely valued field of characteristic $0$ with residue field $\bar K$ of dimension $1$ and characteristic $p$. Define $\epsilon_{K}:=\frac{pv_{K}(p)}{p-1}$. Then $u_2^{\lfloor\epsilon_K\rfloor +1}(K)$ is trivial and we have the following isomorphisms for any uniformizer $\pi\in K$:
\begin{align}
\label{eq Kato i}
\rho_0 : \kk_1(\bar K) &\to \kk_{2}(K)/ u^1_{2}(K), \\
\nonumber \bar y &\mapsto \{y,\pi\}; \\
\intertext{for every $0<i<\epsilon_{K}$ with $(i,p)=1$ and any $\omega$ with $v_K(\omega)=i$,}
\label{eq Kato ii}
\rho_i: \Omega^{1}_{\bar K} &\to u^i_{2}(K)/ u^{i+1}_{2}(K), \\
\nonumber \bar x \,\frac{d\bar y}{\bar y} &\mapsto \{1+\omega x,y\}; \\
\intertext{for every $0<i<\epsilon_{K}$ with $p|i$ and any $\varpi$ with $v_K(\varpi)=i/p$; }
\label{eq Kato iii}
\rho_i : \Omega^{0}_{\bar K}/\ker(d) &\to  u^i_{2}(K)/ u^{i+1}_{2}(K), \\
\nonumber \bar x\pmod{\bar K^p} &\mapsto  \{1+\varpi^p x,\pi\}; \\
\intertext{and for $i=\epsilon_{K}$ when $\epsilon_K\in\Z$,}
\label{eq Kato iv}
\rho_{i} : \bar K/(\bar K \cap \overline{u}\Phi(\bar M)) &\to u^{i}_{2}(K), \\
\nonumber  [\bar x] &\mapsto  \{1+\omega x,\pi\}
\end{align}
where $\omega$ is an element in $K$ with $v_K(\omega)=i$,
 the field $M$ stands for $K(\zeta_p)$, $u$ is an element of $\mathcal{O}_{M}^\times$ such that $\omega u=(\zeta_p-1)^p$ and $\Phi(\bar M):=\{t^p-t: t \in \bar M\}$.
\end{proposition}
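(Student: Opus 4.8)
The plan is to deduce Proposition \ref{prop isom Kato} from Kato's isomorphisms \eqref{eq Kato i original}--\eqref{eq Kato iv original}, which are valid over the field $M := K(\zeta_p)$, by a descent argument along the extension $M/K$. Since $K$ has characteristic $0$ and $\bar K$ has characteristic $p$, the extension $M/K$ is obtained by adjoining $\zeta_p$; its degree divides $p-1$ and is therefore prime to $p$. Because we are working with $\kk_2 = \K_2/p$ throughout, having an extension of degree prime to $p$ is exactly what one wants: the composite $\K_2(K)/p \to \K_2(M)/p \xrightarrow{N_{M/K}} \K_2(K)/p$ is multiplication by $[M:K]$, which is invertible modulo $p$. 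Hence restriction $\kk_2(K) \to \kk_2(M)$ is injective and, more importantly, $\kk_2(K)$ is identified with the image of a canonical projector, so the filtration $u_2^i(K)$ and its graded pieces embed into those of $M$ compatibly with the residue and symbol structure.

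First I would set up the comparison of the two filtrations: one checks that $U_2^i(K)$ maps into $U_2^i(M)$ and that, since $M/K$ is unramified (the residue characteristic is $p$ but $\zeta_p$ reduces into $\bar M = \bar K(\bar\zeta_p)$, an extension of degree prime to $p$, so $e(M/K)=1$), a uniformizer $\pi$ of $K$ remains a uniformizer of $M$ and $v_M$ restricts to $v_K$. This makes the parameters $\epsilon_M = \epsilon_K$ agree and lets the elements $\omega, \varpi$ be used over both fields. Then I would transport Kato's four isomorphisms over $M$ to statements over $K$ by taking Galois invariants (or equivalently applying the norm-section projector) under $\Gal(M/K)$, which acts on the residue field through $\Gal(\bar M/\bar K)$. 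The key point is that each of Kato's target groups over $M$ — namely $\kk_*(\bar M)$, $\Omega^\bullet_{\bar M}$, $\Omega^\bullet_{\bar M}/\ker(d)$, and $H_p^\bullet(\bar M)$ — carries a natural Galois action, and taking invariants recovers the corresponding object over $\bar K$ because $[\bar M : \bar K]$ is prime to $p$ and these are all $p$-torsion (so cohomology of $\Gal(\bar M/\bar K)$ vanishes in positive degree and the invariants equal the coinvariants equal the image of the transfer).

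The shape of the four isomorphisms then changes precisely as stated: in degree $i=0$, the $\kk_2(\bar K)$ summand disappears because $\dim(\bar K)=1$ forces $\kk_2(\bar K)=0$ by Bloch-Kato, leaving only the $\kk_1(\bar K)$ part, which gives \eqref{eq Kato i}. For $0<i<\epsilon_K$ with $(i,p)=1$ the target is unchanged, yielding \eqref{eq Kato ii}. For $p \mid i$ the $\Omega^1_{\bar K}/\ker(d)$ summand of \eqref{eq Kato iii original} likewise vanishes because $\dim_p(\bar K)\le 1$ implies $H^2_p(\bar K)=0$ and the relevant differential quotient is controlled by it, so only the $\Omega^0_{\bar K}/\ker(d) = \bar K/\bar K^p$ part survives, matching \eqref{eq Kato iii}. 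The top piece $i=\epsilon_K$ needs the most care: here $\epsilon_M = \epsilon_K$ is an integer only under the stated hypothesis, and over $M$ the target is $H_p^2(\bar M)\oplus H_p^1(\bar M)$ with $H_p^2(\bar M)=0$; descending $H_p^1(\bar M)=\bar M/\Phi(\bar M)$ and intersecting with the image of $\bar K$ gives the quotient $\bar K/(\bar K\cap \bar u\,\Phi(\bar M))$ of \eqref{eq Kato iv}, where the twist by $u$ with $\omega u = (\zeta_p-1)^p$ records the change of uniformizing parameter for the symbol $\{1+\omega x,\pi\}$ relative to Kato's $\{1+(\zeta_p-1)^p x,\pi\}$ over $M$. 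The vanishing $u_2^{\lfloor\epsilon_K\rfloor+1}(K)=0$ follows from the same statement over $M$ together with injectivity of restriction on $\kk_2$.

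The main obstacle I expect is the top graded piece at $i=\epsilon_K$: correctly tracking how the descent of $H_p^1(\bar M)$ interacts with the change of parameter from $(\zeta_p-1)^p$ to a chosen $\omega$ of valuation $\epsilon_K$, and verifying that the resulting subgroup is exactly $\bar K\cap \bar u\,\Phi(\bar M)$ rather than $\Phi(\bar K)$. This is delicate because $\Phi$ does not commute with the $\Gal(\bar M/\bar K)$-action in a way that simply passes to invariants, so one must argue directly that a symbol $\{1+\omega x,\pi\}$ with $x\in\bar K$ vanishes in $u_2^{\epsilon_K}(K)$ if and only if $\bar x$ lands in $\bar u\,\Phi(\bar M)$ after the $u$-twist, using the injectivity of restriction to reduce the "only if" direction to Kato's description over $M$. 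A secondary but more routine point is confirming the prime-to-$p$ transfer formalism (that $\kk_2(K)\hookrightarrow\kk_2(M)^{\Gal(M/K)}$ is an isomorphism and is compatible with the whole filtration and all the auxiliary differential/cohomology targets), which should follow formally from $N_{M/K}\circ\res = [M:K]$ once each identification over $M$ is checked to be $\Gal(M/K)$-equivariant.
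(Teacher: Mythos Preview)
Your overall strategy of descending Kato's isomorphisms from $M = K(\zeta_p)$ to $K$ via a prime-to-$p$ transfer is the paper's approach, but there is a genuine error in how you set it up: the extension $M/K$ is \emph{not} in general unramified. Since $p = \prod_{j=1}^{p-1}(1-\zeta_p^j)$ is a unit times $(\zeta_p-1)^{p-1}$, the element $\zeta_p - 1$ has positive valuation in $M$, so $\zeta_p$ reduces to $1$ in $\bar M$, not to a primitive $p$-th root of unity. Over $\mathbb{Q}_p$ the extension $\mathbb{Q}_p(\zeta_p)/\mathbb{Q}_p$ is totally ramified of degree $p-1$, and over a general $K$ one gets a mixture: the paper introduces the maximal unramified subextension $L/K$ of $M/K$ and notes that $M/L$ is totally ramified of some degree $d \mid p-1$, with $M = L(\sqrt[d]{\pi})$ for a suitable uniformizer $\pi$ of $K$.

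This breaks several of your claims at once. A uniformizer of $K$ need not remain one in $M$; one has $v_M(p) = e(M/K)\, v_K(p)$ and hence $\epsilon_M = e(M/K)\,\epsilon_K$, so the filtrations do not line up index by index; restriction sends $u_2^i(K)$ into $u_2^{e(M/K)\, i}(M)$, not $u_2^i(M)$. The paper therefore treats the two steps separately. For the totally ramified step (the case $L=K$) it shows that restriction induces an isomorphism $u_2^i(K)/u_2^{i+1}(K) \xrightarrow{\sim} u_2^{di}(M)/u_2^{di+1}(M)$ by checking surjectivity explicitly on symbols and using the norm for injectivity; the nontrivial point is that $N_{M/K}$ sends $u_2^{di+1}(M)$ into $u_2^{i+1}(K)$, which requires a direct computation of $N_{M/K}(1-\pi_M^{j} x)$ for the intermediate indices $di < j < d(i+1)$. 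Only afterwards is the unramified step $L/K$ handled along the lines you sketch, and even there the paper verifies surjectivity of each $\rho_i$ by hand (e.g.\ rewriting $\{1+\omega x,\pi\}$ as $\{1+\omega x,u\}$ for a unit $u$ when $(i,p)=1$) rather than relying on a formal ``take Galois invariants'' argument.
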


\begin{remarque}\label{rem H^1_p}
When $\zeta_p\in K$, i.e.~when $M=K$, the last isomorphism recovers Kato's isomorphism when taking $\omega=(\zeta_p-1)^p$. Indeed, then $\bar u=1$ and one checks that the quotient $\bar K/(\bar K \cap\Phi(\bar K))$ is precisely $H^1_p(\bar K)$.

More generally, when $M/K$ is totally ramified, we have $\bar u \in \bar M=\bar K$, hence the map
$$H^1_p(\bar K)=\bar K/\Phi(\bar K)\rightarrow \bar K/(\bar K \cap \bar u \Phi(\bar K)), \bar x \mapsto \bar u \bar x$$ is an isomorphism and $\rho_{\epsilon_{K}}$ induces an isomorphism between $H^1_p(\bar K)$ and $u_2^i(K)$.

On the other hand, when $M/K$ is unramified, we may have that $\bar K/(\bar K \cap \overline{u}\Phi(\bar M))$ is non isomorphic to $H^1_p(\bar K)$, even when $\bar K$ is perfect. For instance, if $K=\Q_3(\sqrt{-6})$, then $\bar K=\F_3$ and $\bar M=\F_9$. A direct computation gives that
\[H_p^1(\bar K)=\bar K/\Phi(\bar K)=\F_3.\]
Now, if we choose $\pi=\sqrt{-6}$ as a uniformizer and $\omega:=\pi^3$, we get that $\bar u=\sqrt{2}^{-1}\in\bar M$. Since $\Phi(\sqrt{2})=\sqrt{2}$, we get $1\in\bar u \Phi(\bar M)$ and hence
\[\bar K/(\bar K\cap \bar u\Phi(\bar M))=\bar K/\bar K=0.\]
\end{remarque}

\begin{proof}[Proof of Proposition \ref{prop isom Kato}]
Let us start by noting that, since $\dim(\bar K)=1$, for any $\bar y\in \bar K^\times\smallsetminus (\bar K^\times)^p$ we have
\[\Omega^0_{\bar K}=\bar K,\quad \Omega^1_{\bar K}=\bar K\frac{d\bar y}{\bar y},\quad\text{and}\quad \Omega^2_{\bar K}=0.\]

When $\zeta_p\in K$, the isomorphisms from the statement and the triviality of $u_2^{\lfloor\epsilon_K\rfloor +1}(K)$ are immediately given by isomorphisms \eqref{eq Kato i original}--\eqref{eq Kato iv original}, Remark \ref{rem H^1_p} and the previous computation of $\Omega^i_{\bar K}$, except that we have reversed the order of the coordinates in some symbols, which amounts to a sign change.\\

We assume now that $\zeta_p\not\in K$. Define $M:=K(\zeta_p)$ and $L/K$ is the maximal unramified extension of $M/K$. Then we know that $M/L$ is a totally ramified cyclic extension of degree $d$ dividing $p-1$. More precisely, we know that there exists a uniformizer $\pi\in K$ such that $M=L(\sqrt[d]{\pi})$. Furthermore, we know that isomorphisms \eqref{eq Kato i}-\eqref{eq Kato iv} are valid over $M$. We will denote these isomorphisms by $\rho_i^M$.\\

Let us assume first that $L=K$. Write $u_2^0(K):=\kk_2(K)$ and note that, for every $i\geq 0$, $\res_{M/K}(u_2^i(K))\subseteq u_2^{di}(M)$. Thus, there is a natural map
\begin{equation}\label{eq restriction}
\res_{M/K}:u_2^i(K)/u_2^{i+1}(K)\to u_2^{di}(M)/u_2^{di+1}(M),
\end{equation}
which we claim to be an isomorphism for every $0\leq i\leq\epsilon_K$. Moreover, up to multiplication by an integer prime to $p$, its inverse is given by the norm map $N_{M/K}$.

Let us prove surjectivity. This amounts to proving that (the class of) the symbol on the right hand side of $\rho^M_{di}$ is in the image of the restriction morphism. Now, since these isomorphisms are well-defined regardless of the choice of a uniformizer $\pi_M$, of the elements $\omega,\varpi\in M$ and of the preimages $x,y\in\cal O_{M}$, we may choose the uniformizer to be $\pi_M:=\sqrt[d]{\pi}$ and we may always assume that $x,y\in K$ since $K$ and $M$ have the same residue field.

If $i=0$, we see that
\[\res_{M/K}(\{y,\pi\})=\{y,\pi_M^d\}=d\{y,\pi_M\}.\]
And since $(d,p)=1$, we get the surjectivity in this case.

If $(i,p)=1$, then we have
\[\res_{M/K}(\{1+\omega x,y\})=\{1+\omega x,y\},\]
with $v_{M}(\omega)=di$, so we are done as well.

If $p|i$ with $i>0$ and $i<\epsilon_K$, then we have
\[\res_{M/K}(\{1+\varpi^p x,\pi\})=\{1+\varpi^p x,\pi_M^d\}=d\{1+\varpi^p x,\pi_M\},\] with $v_{M}(\varpi)=di/p$ and we are also done since $(d,p)=1$.

If $i=\epsilon_K$, then $(\zeta_p-1)^p=\pi^{\epsilon_K}u_0$ for some $u_0\in\cal O_{M}^\times$. Since $M/K$ is totally ramified, we may fix $u\in\cal O_K^\times$ such that $\bar u_0=\bar u$. Then,
\[\res_{M/K}(\{1-\pi^{\epsilon_K}u x,\pi\})=\{1-(\zeta_p-1)^p u_0^{-1}ux,\pi_M^d\}=d\{1-(\zeta_p-1)^px,\pi_M\},\]
where the last equality follows from isomorphism \eqref{eq Kato iv} and the fact that $\bar x=\bar x\bar u\bar u_0^{-1}$. Since $(d,p)=1$, we are done in this case as well.\\

We prove now injectivity of the restriction using corestriction, which corresponds to the norm map $N_{M/K}:\kk_2(M)\to\kk_2(K)$. Since $(d,p)=1$, we know that the composite $\cores\circ\res$ is a bijection on every $u_2^i(K)$. In particular, we know that it defines a bijection on the quotient $u_2^i(K)/u_2^{i+1}(K)$ and thus $N_{M/K}$ will correspond to the inverse of $\res_{M/K}$ up to multiplication by $d$ once we prove that it is bijective. Since $\cores\circ\res$ is a bijection and $\res$ is surjective on $u_2^{di}(M)/u_2^{di+1}(M)$, it will suffice to prove that $\cores\circ\res$ factors through $u_2^{di}(M)/u_2^{di+1}(M)$, which amounts to proving that $N_{M/K}$ sends $u_2^{di+1}(M)$ to $u_2^{i+1}(K)$. Note that this is evident for $i=\epsilon_K$ since $di=\epsilon_{M}$ and hence $u_2^{di+1}(M)=0$, so we assume henceforth that $i<\epsilon_K$.

Keeping our choice of $\pi_M=\sqrt[d]{\pi}$, we know by isomorphisms \eqref{eq Kato ii} and \eqref{eq Kato iii} that 
any element in $u_2^{di+1}(M)$ can be written as a sum of symbols of the form
\begin{itemize}
\item $\{1-\pi_M^{j}x_j,y_j\}$ (if $(j,p)=1$ and $\omega=-\pi_M^j$);
\item $\{1-\pi_M^{j}x_j,\pi_M\}$ (if $p|j$ and $\varpi=-\pi_M^{j/p}$);
\item and an element in $u_2^{di+d}(M)$;
\end{itemize}
where $di+1\leq j\leq di+d-1$ and $x_j,y_j\in \cal O_K$. And since we already proved that the restriction is surjective on the quotients $u_2^j(M)/u_2^{j+1}(M)$ for any $j$, we know that $u_2^{di+d}(M)$ is sent to $u_2^{i+1}(K)$ by the corestriction, so we may focus on the remaining symbols.

Let us compute the norm of an element of the form $1-\pi_M^{j}x$ with $x\in\cal O_K$. Write $e=(j,d)$, $d=ef$ and $j=eg$. Then the conjugates of $\pi_M^j$ are precisely $\zeta_{f}^{k}\pi_M^j$ for $0\leq k\leq f-1$. Thus, we get
\[N_{M/K}(1-\pi_M^{j}x)=\left(\prod_{k=0}^{f-1}(1-\zeta_f^k\pi_M^j x)\right)^e=(1-\pi_M^{fj}x^f)^e=(1-\pi_M^{feg}x^f)^e=(1-{\pi}^{g}x^f)^e.\]

Consider now a symbol of the form $\{1-\pi_M^{j}x,\pi_M\}$ with $x\in\cal O_K$ and $p|j$. Then, if we fix $m$ such that $md\equiv 1\pmod p$, we have
\[\{1-\pi_M^{j}x,\pi_M\}=\{1-\pi_M^{j}x,\pi_M^{md}\}=\{1-\pi_M^{j}x,\pi^{m}\}
=m\{1-\pi_M^{j}x,\pi\}.\]
And thus, taking corestriction we get
\begin{multline}\label{eq cores p|j}
N_{M/K}(\{1-\pi_M^{j}x,\pi_M\})= N_{M/K}(m\{1-\pi_M^{j}x,\pi\})=m\{N_{M/K}(1-\pi_M^{j}x),\pi\}\\
=m\{(1-{\pi}^{g}x^f)^e,\pi\}=me\{1-{\pi}^{g}x^f,\pi\}.
\end{multline}
On the other hand, consider a symbol in $u_2^{j}(M)$ of the form $\{1-\pi_M^{j}x,y\}$ with $x,y\in\cal O_K$ and $(j,p)=1$. Then, taking corestriction we have
\begin{equation}\label{eq cores (j,p)=1}
N_{M/K}(\{1-\pi_M^{j}x,y\})=\{N_{M/K}(1-\pi_M^{j}x),y\}=\{(1-{\pi}^{g}x^f)^e,y\}=e\{1-{\pi}^{g}x^f,y\}.
\end{equation}
Both in \eqref{eq cores p|j} and \eqref{eq cores (j,p)=1}, we get a symbol in $u_2^{g}(K)$. And since $g=\frac{j}{e}\geq\frac{j}{d}\geq i+\frac{1}{d}>i$, we see that the symbol lies indeed in $u_2^{i+1}(K)$, as wished. This concludes the proof of the claim.\\

Having proved that \eqref{eq restriction} is an isomorphism, fix an arbitrary uniformizer $\pi\in K$. Write $\pi=\pi_M^du_1$ for some uniformizer $\pi_M\in\cal O_{M}$ and some $u_1\in\cal O_{M}^\times$. Then
\begin{itemize}
\item We put $\rho_0:=\res^{-1}_{M/K}\circ m_d\circ\rho_0^M$, where $m_d$ denotes multiplication by $d$ and where we use the uniformizer $\pi_M$ to define $\rho_0^M$. Then we see that, for a lift $y\in\cal O_K$ of $\bar y\in\bar K$ we have
\[\rho_0(\bar y)=\res^{-1}_{M/K}(d\{y,\pi_M\})=\res^{-1}_{M/K}(\{y,\pi_M^d\})=\{y,\pi\},\]
where the last equality uses the fact that $\pi=\pi_M^du_1$ and $\{y,u_1\}\in u_2^1(K)$ by isomorphism \eqref{eq Kato i original} and the fact that $\kk_2(\bar K)=0$.
\item For $0<i<\epsilon_K$ with $(i,p)=1$ and $\omega\in K$ such that $v_K(\omega)=i$, we put $\rho_i:=\res^{-1}_{M/K}\circ \rho_{di}^M$, where we use $\omega$ as the element with $v_{M}(\omega)=di$. Then we see that, for lifts $x,y\in\cal O_K$ of $\bar x,\bar y\in\bar K$ we have
\[\rho_i\left(\bar x\frac{d\bar y}{\bar y}\right)=\res^{-1}_{M/K}(\{1+\omega x,y\})=\{1+\omega x,y\}.\]
\item For $0<i<\epsilon_K$ with $p|i$ and $\varpi\in K$ with $v_K(\varpi)=i/p$, we put $\rho_i:=\res^{-1}_{M/K}\circ m_d\circ\rho_{di}^M$, where we use $\pi_M$ as the uniformizer and $\varpi$ as the element with $v_{M}(\varpi)=di/p$. Then we see that, for a lift $x\in\cal O_K$ of $\bar x\in\bar K$ we have
\[\rho_i\left(\bar x\pmod{\bar K^p}\right)=\res^{-1}_{M/K}(d\{1+\varpi^p x,\pi_M\})=\res^{-1}_{M/K}(\{1+\varpi^p x,\pi_M^d\})=\{1+\varpi^p x,\pi\},\]
where the last equality uses the fact that $\pi=\pi_M^du_1$ and $\{1+\varpi^p x,u_1\}\in u_2^{i+1}(K)$ by isomorphism \eqref{eq Kato iii original} and the fact that $\Omega_{\bar K}^1/\ker(d)\subseteq\Omega_{\bar K}^2=0$.
\item For $i=\epsilon_K$, we consider the composite $\theta_i:=(\rho_{di}^M)^{-1}\circ \res_{M/K}$, where we use $\pi_M$ as the uniformizer. The morphism $\theta_i$ is injective, and we want to compute its image. To do so, observe that the group $u_2^{i}(K)$ is generated by symbols of the form $\{1+\omega x, y\}$ and $\{1+\omega x, \pi\}$ with $x,y\in \mathcal{O}_K^\times$. Since $\res_{M/K}$ is injective and every symbol of the form $\{1+\omega x, y\}$ with $x,y\in \mathcal{O}_M^\times$ is trivial in $u_2^{i}(M)$, we deduce that $u_2^{i}(K)$ is generated by symbols of the form $\{1+\omega x, \pi\}$ with $x,y\in \mathcal{O}_K^\times$. Hence the image of $\theta_i$ is generated by the elements of the form
\[\theta_i(\{1+\omega x, \pi\})=\theta_i(d\{1+(1+\zeta_p)^p u^{-1} x, \pi_M\})=d\bar u^{-1} \bar x,\]
with $x,y\in \mathcal{O}_K^\times$. Here, the first equality uses the fact that $\pi=\pi_M^du_1$ and $\{1+\omega x,u_1\}\in u_2^{\epsilon_M+1}(M)$ by isomorphism \eqref{eq Kato iv original} and the fact that $H^2_p(\bar M)=0$. We therefore have an isomorphism
\begin{align*}
\eta_i: \bar K/(\bar K \cap \overline{u}\Phi(\bar M)) &\to \im (\theta_i)\subseteq \bar M/\Phi(\bar M)=H_p^1(\bar M)\\
\overline{x} &\mapsto d\overline{u}^{-1}\overline{x},
\end{align*}
and we set $\rho_i:=\theta_i^{-1}\circ\eta_i$, which sends,$[\bar x]$ to $\{1+\omega x,\pi\}$.
\end{itemize}
This concludes the proof of the case where $L=K$.\\

We move on to the general case. Since $M=L(\zeta_p)$ and $M/L$ is totally ramified, we have $\bar L=\bar M$ and the isomorphisms \eqref{eq Kato i}--\eqref{eq Kato iv} at the level of $L$, which we denote by $\rho_i^L$.

Note that, since $L/K$ is unramified, we have $\res_{L/K}(u_2^i(K))\subseteq u_2^i(L)$. Thus there is a natural map
\[\res_{L/K}:u_2^i(K)/u_2^{i+1}(K)\to u_2^i(L)/u_2^{i+1}(L),\]
which is injective for every $0\leq i\leq\epsilon_K$ by the classical restriction-corestriction argument.\\

We define then $\rho_i$ as follows. Fix an arbitrary uniformizer $\pi\in K$ and note that it corresponds to a uniformizer of $L$ as well. Then
\begin{itemize}
\item For $i=0$, we define $\rho_0:=\res_{L/K}^{-1}\circ\rho_0^L\circ\iota$, where $\iota:\bar K\to\bar L$ denotes the inclusion and where we use the uniformizer $\pi$ to define $\rho_0^L$. This is well defined since, for a lift $y\in\cal O_K$ of $\bar y\in\bar K$ we have
\[\rho_0^L\circ\iota(\bar y)=\{y,\pi\},\]
which is clearly in the image of $\res_{L/K}$ and coincides with the morphism in \eqref{eq Kato i}. It is injective since $\iota$ is injective. For the surjectivity, observe that the group $\kk_{2}(K)/ u^1_{2}(K)$ is spanned by elements of the form $\{y,z\}$ or $\{y,\pi\}$ with $y$ and $z$ in $\mathcal{O}_K^\times$. But the restriction to $\kk_{2}(L)/ u^1_{2}(L)$ of an element of the form $\{y,z\}$ with $y,z\in \mathcal{O}_K^\times$ is trivial, and hence, by injectivity of the restriction, the symbol $\{y,z\}$ itself is trivial in $\kk_{2}(K)/ u^1_{2}(K)$. We deduce that $\kk_{2}(K)/ u^1_{2}(K)$ is spanned by elements of the form $\{y,\pi\}$ with $y\in\mathcal{O}_K^\times$ and such an element is obviously in the image of $\rho_0$.
\item For $0<i<\epsilon_K$ with $(i,p)=1$ and $\omega\in K$ such that $v_K(\omega)=i$, we define $\rho_i:=\res_{L/K}^{-1}\circ\rho_i^L\circ\iota_1$, where $\iota_1:\Omega^1_{\bar K}\to\Omega^1_{\bar L}$ denotes the natural map induced by the inclusion $\iota$ and where we use $\omega$ as the element with $v_{L}(\omega)=i$. This is well defined since, for lifts $x,y\in\cal O_K$ of $\bar x,\bar y\in\bar K$ we have
\[\rho_i^L\circ\iota_1\left(\bar x\frac{d\bar y}{\bar y}\right)=\{1+\omega x,y\}.\]
which is clearly in the image of $\res_{L/K}$ and coincides with the morphism in \eqref{eq Kato ii}. It is injective since $\iota_1$ is injective. For the surjectivity, observe that the group $u^i_{2}(K)/ u^{i+1}_{2}(K)$ is spanned by elements of the form $\{1+\omega x,y\}$ and $\{1+\omega x,\pi\}$ with $x$ and $y$ in $\mathcal{O}_K^\times$. But for each element of the form $\{1+\omega x,\pi\}$ with $x\in\mathcal{O}_K^\times$, if we take $r$ such that $ri+1\equiv 0 \mod p$, we can rewrite the symbol as
\[\{1+\omega x,\pi\}=\{1+\omega x,(-\omega x)^r\pi\}=\{1+\omega x,u\pi^{ri+1}\}=\{1+\omega x,u\},\]
for some unit $u\in\cal O_K^\times$. Hence $u^i_{2}(K)/ u^{i+1}_{2}(K)$ is spanned by elements of the form $\{1+\omega x,y\}$ with $x$ and $y$ in $\mathcal{O}_K^\times$ and such an element is obviously in the image of $\rho_i$.

\item For $0<i<\epsilon_K$ with $p|i$ and $\varpi\in K$ with $v_K(\varpi)=i/p$, we put $\rho_i:=\res^{-1}_{L/K}\circ\rho_{i}^L\circ\iota_0$, where $\iota_0:\bar K/\bar K^p=\Omega_{\bar K}^0/\ker(d)\to\Omega_{\bar L}^0/\ker(d)= \bar L/\bar L^p$ denotes the natural map induced by the inclusion $\iota$ and where we use $\pi$ as the uniformizer and $\varpi$ as the element with $v_{M}(\varpi)=i/p$. This is well-defined since, for a lift $x\in\cal O_K$ of $\bar x\in\bar K$ we have
\[\rho_{i}^L\circ\iota_0\left(\bar x\pmod{\bar K^p}\right)=\rho_{i}^L\left(\bar x\pmod{\bar L^p}\right)=\{1+\varpi^p x,\pi\},\]
which is clearly in the image of $\res_{L/K}$ and coincides with the morphism in \eqref{eq Kato iii}. Since $\bar L/\bar K$ is separable, the map $\iota_0$ is injective and hence so is $\rho_i$. For the surjectivity, observe that the group $u^i_{2}(K)/ u^{i+1}_{2}(K)$ is spanned by elements of the form $\{1+\varpi^p x,y\}$ and $\{1+\varpi^p x,\pi\}$ with $x$ and $y$ in $\mathcal{O}_K^\times$. But each element of the form $\{1+\varpi^p x,y\}$ with $x,y\in\mathcal{O}_K^\times$ is trivial since it becomes trivial in $u^i_{2}(L)/ u^{i+1}_{2}(L)$ and the restriction map $u^i_{2}(K)/ u^{i+1}_{2}(K)\rightarrow u^i_{2}(L)/ u^{i+1}_{2}(L)$ is injective. We deduce that $u^i_{2}(K)/ u^{i+1}_{2}(K)$ is spanned by elements of the form $\{1+\varpi^p x,\pi\}$ with $x\in \mathcal{O}_K^\times$ and such an element is obviously in the image of $\rho_i$.

\item For $i=\epsilon_K$, we consider the composite $\theta_i:=(\rho_{i}^L)^{-1}\circ \res_{L/K}$. The morphism $\theta_i$ is injective, and we want to compute its image. To do so, observe that the group $u_2^{i}(K)$ is generated by symbols of the form $\{1+\omega x, y\}$ and $\{1+\omega x, \pi\}$ with $x,y\in \mathcal{O}_K^\times$. Since $\res_{L/K}$ is injective and every symbol of the form $\{1+\omega x, y\}$ with $x,y\in \mathcal{O}_L^\times$ is trivial in $u_2^{i}(L)$, we deduce that $u_2^{i}(K)$ is generated by symbols of the form $\{1+\omega x, \pi\}$ with $x,y\in \mathcal{O}_K^\times$. Hence the image of $\theta_i$ is generated by the elements of the form $\theta_i(\{1+\omega x, \pi\})= \bar x$ with $x\in \mathcal{O}_K^\times$.  We therefore have an isomorphism
\begin{align*}
\eta_i: \bar K/(\bar K \cap \overline{u}\Phi(\bar M)) &\to \im (\theta_i)\subseteq \bar L/(\bar L \cap \overline{u}\Phi(\bar M))= \bar M/\Phi(\bar M)\\
\overline{x} &\mapsto \overline{x},
\end{align*}
and we set $\rho_i:=\theta_i^{-1}\circ\eta_i$, which sends $[\bar x]$ to $\{1+\omega x,\pi\}$.
\end{itemize}
This concludes the proof in the general case.
\end{proof}

Now that we have extended Kato's isomorphisms, we can follow the same proof structure as for \cite[Prop.~1.8]{LieblichParimalaSuresh}, which we used in the case where $K$ was assumed to contain all $p$-th roots of unity. Indeed, to do so, one may settle the following three ``absorption lemmas'' that are analoguous to Lemmas 1.5, 1.6 and 1.7 of loc.~cit.~and that will be helpful to inductively reduce every Milnor $\mathrm{K}$-theory class to a single symbol.

\begin{lemma}\label{lema absorcion i coprimo}
Let $i\geq 1$ with $(i,p)=1$ and let $i<j<\epsilon_K$. Let $a=\{1-\pi^ix,y\}$ be a symbol in $u_2^i(K)\smallsetminus u_2^{i+1}(K)$, where $\pi$ is a uniformizer in $K$ and $x,y\in\cal O_K$, and let $b$ be a class in $u_2^j(K)$. Then there exist a uniformizer $\pi'\in K$ and $x',y'\in\cal O_K$ such that
\[a+b=\{1-\pi'^ix',y'\}+c,\text{ with } c\in u_2^{j+1}(K).\]
\end{lemma}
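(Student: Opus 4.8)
The plan is to reduce everything to a congruence modulo $u_2^{j+1}(K)$: since any element of $u_2^{j+1}(K)$ is an admissible $c$, it suffices to produce a uniformizer $\pi'$ and $x',y'\in\cal O_K$ with $\{1-\pi'^ix',y'\}\equiv a+b\pmod{u_2^{j+1}(K)}$, and then set $c:=a+b-\{1-\pi'^ix',y'\}$. The first observation I would make is that, because $\dim(\bar K)=1$, the module $\Omega^1_{\bar K}$ is a one-dimensional $\bar K$-vector space and $\Omega^2_{\bar K}=0$, exactly as recorded at the start of the proof of Proposition \ref{prop isom Kato}. Since $a\notin u_2^{i+1}(K)$, isomorphism \eqref{eq Kato ii} forces the class $\bar x\,\frac{d\bar y}{\bar y}$ to be nonzero; hence $\bar y$ is a unit with $\frac{d\bar y}{\bar y}\neq 0$, and being a nonzero vector in a line, $\frac{d\bar y}{\bar y}$ is a \emph{basis} of $\Omega^1_{\bar K}$. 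I would then split according to whether $(j,p)=1$ or $p\mid j$, representing $b$ modulo $u_2^{j+1}(K)$ by the relevant form of Kato's isomorphism at level $j$ from Proposition \ref{prop isom Kato}.

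\textbf{Case $(j,p)=1$.} Here I would keep $\pi'=\pi$ and $y'=y$ and perturb only the first entry. Using that $\frac{d\bar y}{\bar y}$ is a basis, write $b\equiv\rho_j\bigl(\bar s\,\frac{d\bar y}{\bar y}\bigr)=\{1+\pi^js,\,y\}\pmod{u_2^{j+1}(K)}$ (taking the element of valuation $j$ in \eqref{eq Kato ii} to be $\pi^j$). Setting $x'=x-\pi^{j-i}s\in\cal O_K$ one computes
\[\{1-\pi^ix',y\}-\{1-\pi^ix,y\}=\Bigl\{\tfrac{1-\pi^ix'}{1-\pi^ix},\,y\Bigr\}=\{1+\pi^js(1+O(\pi^i)),\,y\}\equiv\{1+\pi^js,\,y\},\]
where the last congruence holds because replacing the unit factor $1+O(\pi^i)$ by $1$ changes the symbol by a term of level $i+j>j$, hence lying in $u_2^{j+1}(K)$. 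Thus $\{1-\pi^ix',y\}\equiv a+b$, settling this case.

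\textbf{Case $p\mid j$.} Now $b\equiv\rho_j(\bar z)=\{1+\pi^jz,\pi\}\pmod{u_2^{j+1}(K)}$ by \eqref{eq Kato iii} (with $\varpi=\pi^{j/p}$), so the second entry is the uniformizer. Perturbing $x$ as above only yields symbols of the form $\{1+\pi^js,y\}$ with $y$ a unit, and these now vanish modulo $u_2^{j+1}(K)$ because the corresponding $\Omega^1_{\bar K}/\ker(d)$-component of Kato's level-$j$ piece is zero (again since $\Omega^2_{\bar K}=0$); so that move is useless and I must perturb $y$ instead. Taking $y'=y(1+\pi^{j-i}w)$ gives $\{1-\pi^ix,y'\}=a+\{1-\pi^ix,\,1+\pi^{j-i}w\}$, and the problem reduces to evaluating the "double principal unit" symbol $\{1-\pi^ix,\,1+\pi^{j-i}w\}$ modulo $u_2^{j+1}(K)$. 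I expect, via the Steinberg relation, that it equals $\rho_j$ of a class $\overline{c\,\bar x\,\bar w}\in\bar K/\bar K^p$ for an explicit nonzero constant $c$; since $\bar x\neq 0$, letting $\bar w$ range over $\bar K$ covers all of $\bar K/\bar K^p$, so one solves $\overline{c\,\bar x\,\bar w}=\bar z$ for a suitable $w$ and then $\pi'=\pi$, $x'=x$, $y'=y(1+\pi^{j-i}w)$ do the job.

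The only genuinely delicate point, and the main obstacle, is the symbol identity in the case $p\mid j$: rewriting $\{1-\pi^ix,\,1+\pi^{j-i}w\}$ as a standard level-$j$ generator $\{1+\pi^jz,\pi\}$ and controlling every correction term so that it lands in $u_2^{j+1}(K)$. This is precisely the Steinberg-relation manipulation from Saltman's notes (the analogue of Lemma~1.5 of \cite{LieblichParimalaSuresh}) that the paper defers to its appendix; everything else in the argument is bookkeeping with the isomorphisms of Proposition \ref{prop isom Kato}.
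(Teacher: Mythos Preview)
Your treatment of the case $(j,p)=1$ is correct and matches the paper: write $b\equiv\{1-\pi^jz,y\}$ via \eqref{eq Kato ii} and absorb into the first slot.

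In the case $p\mid j$, however, there is a genuine gap. You reduce to evaluating the ``double principal unit'' symbol $\{1-\pi^ix,\,1+\pi^{j-i}w\}$ modulo $u_2^{j+1}(K)$ and assert (without proof) that it equals $\rho_j(\overline{c\,\bar x\,\bar w})$ for some nonzero constant $c$. This is not immediate: a priori that symbol only lies in $u_2^{\max(i,j-i)}(K)$, and pushing it all the way into $u_2^j(K)$ and then identifying its image in $u_2^j/u_2^{j+1}$ requires an iterated computation through the Kato filtration that you have not carried out. Your remark that this is ``precisely the Steinberg-relation manipulation \dots\ that the paper defers to its appendix'' is not accurate: the appendix does \emph{not} compute this symbol.

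What the paper actually does in the case $p\mid j$ is both different and simpler. Rather than perturbing $y$, it first rewrites $a$ itself so that its second entry is a uniformizer: choosing $k$ with $ik\equiv 1\pmod p$ and using $\{1-\pi^ix,\pi^ix\}=0$, one gets
\[
a=\{1-\pi^ix,y\}=\{1-\pi^ix,\pi^{ik}x^{k}y\}=\{1-\pi^ix,\pi x^{k}y\}=\{1-\pi'^{\,i}z,\pi'\},
\]
with $\pi':=\pi x^{k}y$ a uniformizer and $z\in\cal O_K^\times$. Now apply \eqref{eq Kato iii} \emph{with the uniformizer $\pi'$} to write $b\equiv\{1-\pi'^{\,j}z',\pi'\}\pmod{u_2^{j+1}(K)}$; the two symbols share the second slot $\pi'$ and combine trivially in the first slot to $\{1-\pi'^{\,i}x',\pi'\}$. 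A final Steinberg manipulation (inserting $(\pi'^{\,i}x')^{-k}$) converts this back to a symbol with a unit in the second entry. No ``double principal unit'' identity is needed.
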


\begin{lemma}\label{lema absorcion i divide a p}
Let $i\geq 1$ with $p|i$ and let $i<j<\epsilon_K$. Let $a=\{1-\pi^iy,\pi\}$ be a symbol in $u_2^i(K)\smallsetminus u_2^{i+1}(K)$, where $\pi$ is a uniformizer in $K$ and $x\in\cal O_K$, and let $b$ be a class in $u_2^j(K)$. Then there exist a uniformizer $\pi'\in K$ and $y'\in\cal O_K$ such that
\[a+b=\{1-\pi'^iy',\pi'\}+c,\text{ with } c\in u_2^{j+1}(K).\]
\end{lemma}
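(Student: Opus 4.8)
The plan is to absorb $b$ one graded piece at a time; since the statement permits a new uniformizer, the essential move is a careful modification of $\pi$, the whole point being that the congruence $p\mid i$ forces the parasitic cross-terms to vanish modulo $p$. Throughout write $k:=j-i\geq 1$. First I would reduce to the case where $b$ is a single generator symbol. As $b$ only matters modulo $u_2^{j+1}(K)$, Proposition~\ref{prop isom Kato} identifies its class in $u_2^j(K)/u_2^{j+1}(K)$ with an element of $\Omega^1_{\bar K}$ (if $(j,p)=1$) or of $\bar K/\ker(d)=\bar K/\bar K^p$ (if $p\mid j$), and since $\dim(\bar K)=1$ these groups are generated by a single class. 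Hence, modulo $u_2^{j+1}(K)$, we may assume $b=\{1-\pi^j z,\pi\}$ when $p\mid j$, and $b=\{1-\pi^j z,y\}$ with $z\in\cal O_K$ and the same unit $y$ as in $a$ (using that $\bar y$ is a $p$-basis of $\bar K$, so that $\Omega^1_{\bar K}=\bar K\,\tfrac{d\bar y}{\bar y}$) when $(j,p)=1$. The case $p\mid j$ is then immediate: by multiplicativity in the first slot,
\[
a+b=\{(1-\pi^iy)(1-\pi^jz),\pi\}=\{1-\pi^i y',\pi\},\qquad y':=y+\pi^{k}z-\pi^{j}yz,
\]
and $\overline{y'}=\bar y\notin\bar K^p$, so $c=0$ works with $\pi'=\pi$.

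The remaining case $(j,p)=1$ is the heart of the matter. Here I would set
\[
\pi':=\pi(1+\pi^{k}t),\qquad y':=y+\pi^{k}s,
\]
for parameters $s,t\in\cal O_K$ to be chosen, and expand $\{1-\pi'^iy',\pi'\}$ modulo $u_2^{j+1}(K)$. Using that the symbol of two principal units of levels $a$ and $b$ lies in $u_2^{a+b}(K)$ (to discard the higher cross-terms), a direct computation gives
\[
\{1-\pi'^iy',\pi'\}\equiv \{1-\pi^iy,\pi\}+\{1-\pi^iy,\,1+\pi^{k}t\}+\{1-\pi^j(s+ity),\pi\}\pmod{u_2^{j+1}(K)},
\]
the first term being exactly $a$ and the last two lying at level $j$. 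Now the hypothesis $p\mid i$ enters decisively: since $v_K(i)\geq v_K(p)\geq 1$, one has $ity\in\mathfrak m_K$, so $\pi^{j}\cdot ity\in\pi^{j+1}\cal O_K$ and the third symbol reduces to $\{1-\pi^j s,\pi\}$, whose class in $\Omega^1_{\bar K}$ is the exact form $j^{-1}\,d\bar s$ (by the Steinberg relation $\{1-\pi^js,\pi^js\}=0$ together with $(j,p)=1$).

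It remains to choose $s,t$ so that the level-$j$ class of $\{1-\pi^iy,1+\pi^{k}t\}+\{1-\pi^j s,\pi\}$ equals that of $b$. The forms $d\bar s$ sweep out the subgroup $d(\bar K)$ of exact forms, which is a proper $\bar K^p$-subspace of $\Omega^1_{\bar K}=\bar K\,\tfrac{d\bar y}{\bar y}$: concretely, since $[\bar K:\bar K^p]=p$ and $\{1,\bar y,\dots,\bar y^{p-1}\}$ is a $\bar K^p$-basis of $\bar K$, the exact forms miss precisely the $\bar y^{p-1}\,d\bar y$ line. The principal-units cross-term $\{1-\pi^iy,1+\pi^{k}t\}$ supplies the missing, non-exact direction, its leading class being nonzero precisely because $\bar y\notin\bar K^p$, that is, because $a\notin u_2^{i+1}(K)$; varying $t$ it reaches the $\bar y^{p-1}\,d\bar y$ line. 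Hence the two contributions jointly surject onto $\Omega^1_{\bar K}$, one solves for $\bar s,\bar t$, lifts them to $s,t\in\cal O_K$, and then $c:=a+b-\{1-\pi'^iy',\pi'\}\in u_2^{j+1}(K)$ does the job with $\pi'=\pi(1+\pi^k t)$ and $y'=y+\pi^k s$.

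The main obstacle is the explicit evaluation of the leading term of the symbol of two principal units $\{1-\pi^iy,1+\pi^{k}t\}$ modulo $u_2^{j+1}(K)$, together with the verification that it is genuinely non-exact and therefore, combined with the exact forms produced by the first-slot adjustment, surjects onto the graded piece. This is exactly the bilinear Steinberg-relation manipulation in the style of Saltman that the paper defers to the appendix; the two features making it succeed are the congruence $p\mid i$, which annihilates the spurious term $ity$ modulo $p$, and the hypothesis $a\notin u_2^{i+1}(K)$, which guarantees that the cross-term is nondegenerate.
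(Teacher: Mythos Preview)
Your case $p\mid j$ matches the paper. For $(j,p)=1$ you take a genuinely different route: you perturb $\pi$ by a \emph{principal} unit, $\pi'=\pi(1+\pi^kt)$, and then need the class of the cross-term $\{1-\pi^iy,\,1+\pi^kt\}$ in $u_2^j/u_2^{j+1}\cong\Omega^1_{\bar K}$. You leave this computation open, and it is \emph{not} what the paper's appendix does, so your claim that ``this is exactly the manipulation the paper defers to the appendix'' is mistaken. Your approach does succeed, however: the Steinberg relation coming from $\tfrac{1-X}{1-XY}+\tfrac{X(1-Y)}{1-XY}=1$ with $X=\pi^iy$, $Y=-\pi^kt$ yields $\{1-\pi^iy,1+\pi^kt\}\equiv\{1+\pi^jyt,y\}\pmod{u_2^{j+1}}$, whose class is $\bar t\,d\bar y$; so in fact the cross-term alone already surjects onto $\Omega^1_{\bar K}$ and your parameter $s$ is redundant.

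The paper's argument is more explicit and avoids symbols of two principal units altogether. It decomposes the class of $b$ along the $\bar K^p$-basis $1,\bar y,\dots,\bar y^{p-1}$ of $\bar K$. The pieces with $l\neq 0$ are converted via Steinberg (using $(j,p)=1$ and an exponent $ml+1\equiv 0\pmod p$) into symbols $\{1-\pi^jw_l,\pi\}$ and absorbed into the first slot of $a$ exactly as in the $p\mid j$ case. For the remaining $l=0$ piece $\{1-\pi^jz_0^p,y\}$, the uniformizer is changed by the \emph{non-principal} unit $y$, namely $\pi'=\pi y$: then both $a$ and this piece rewrite (using $p\mid i$ for $a$ and $(j,p)=1$ for the piece) as $\{1-\pi'^i(\cdot),\pi'\}$ and combine multiplicatively in the first slot. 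Your perturbative ansatz is conceptually uniform but costs the extra principal-unit computation you did not carry out; the paper's coarser uniformizer change buys a shorter, fully explicit proof.
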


\begin{lemma}\label{lema absorcion i=0}
Let $0<j<\epsilon_K$. Let $a=\{y,\pi\}$ be a symbol in $\kk_2(K)\smallsetminus u_2^{1}(K)$, where $\pi$ is a uniformizer in $K$ and $y\in\cal O_K$, and let $b$ be a class in $u_2^j(K)$. Then there exist a uniformizer $\pi'\in K$ and $y'\in\cal O_K$ such that
\[a+b=\{y',\pi'\}+c,\text{ with } c\in u_2^{j+1}(K).\]
\end{lemma}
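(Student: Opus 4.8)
The plan is to reduce the class $b$ modulo $u_2^{j+1}(K)$ to a single symbol using the graded description furnished by Proposition \ref{prop isom Kato}, and then to absorb that symbol into $a=\{y,\pi\}$ using only the bimultiplicativity and antisymmetry of Milnor symbols. First I would extract the content of the hypothesis $a\notin u_2^1(K)$: under $\rho_0$ from \eqref{eq Kato i} the symbol $\{y,\pi\}$ corresponds to the class of $\bar y$ in $\kk_1(\bar K)=\bar K^\times/(\bar K^\times)^p$, so $a\notin u_2^1(K)$ forces $y\in\cal O_K^\times$ with $\bar y\notin(\bar K^\times)^p$. Since $\dim(\bar K)=1$, the module $\Omega^1_{\bar K}$ is free of rank one over $\bar K$, generated by $\frac{d\bar z}{\bar z}$ for any $\bar z\notin(\bar K^\times)^p$; in particular $\frac{d\bar y}{\bar y}$ generates $\Omega^1_{\bar K}$. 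This is precisely what will make the target symbol flexible enough to swallow $b$. The assumption $0<j<\epsilon_K$ guarantees that the middle-range isomorphisms \eqref{eq Kato ii}--\eqref{eq Kato iii} apply, so I would split according to the residue of $j$ modulo $p$.

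When $(j,p)=1$, isomorphism \eqref{eq Kato ii} gives $\rho_j:\Omega^1_{\bar K}\to u_2^j(K)/u_2^{j+1}(K)$. Using that $\frac{d\bar y}{\bar y}$ generates $\Omega^1_{\bar K}$ and taking $\omega=\pi^j$, I can represent the class of $b$ modulo $u_2^{j+1}(K)$ by a symbol $\{1+\pi^j x,y\}$ for some $x\in\cal O_K$, \emph{crucially with the same second coordinate $y$ as in $a$}. Then
\[
a+b\equiv\{y,\pi\}+\{1+\pi^j x,y\}=\{y,\pi\}-\{y,1+\pi^j x\}=\{y,\pi(1+\pi^j x)^{-1}\}\pmod{u_2^{j+1}(K)},
\]
so it suffices to take $y'=y$ and the uniformizer $\pi'=\pi(1+\pi^j x)^{-1}$, the latter being a uniformizer since $1+\pi^j x\in\cal O_K^\times$.

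When $p\mid j$, isomorphism \eqref{eq Kato iii} gives $\rho_j:\Omega^0_{\bar K}/\ker(d)\to u_2^j(K)/u_2^{j+1}(K)$. Choosing $\varpi$ with $v_K(\varpi)=j/p$, I can represent the class of $b$ modulo $u_2^{j+1}(K)$ by a symbol $\{1+\varpi^p x,\pi\}$ for some $x\in\cal O_K$, now sharing the coordinate $\pi$ with $a$. Hence
\[
a+b\equiv\{y,\pi\}+\{1+\varpi^p x,\pi\}=\{y(1+\varpi^p x),\pi\}\pmod{u_2^{j+1}(K)},
\]
and it suffices to take $\pi'=\pi$ and the unit $y'=y(1+\varpi^p x)\in\cal O_K^\times$. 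In either case the difference between $b$ and the chosen single symbol lies in $u_2^{j+1}(K)$, which I fold into $c$, yielding $a+b=\{y',\pi'\}+c$ with $c\in u_2^{j+1}(K)$.

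The only genuinely essential point — and the closest thing to an obstacle — is the observation that $\frac{d\bar y}{\bar y}$ generates $\Omega^1_{\bar K}$, which lets me present the leading part of $b$ with second coordinate exactly $y$ (respectively $\pi$), so that a single application of bimultiplicativity collapses $a+b$ into one symbol. This is exactly why the $i=0$ case is the easiest of the three absorption lemmas: the symbol $\{y,\pi\}$ leaves one coordinate free in each of the two subcases, whereas for $a=\{1-\pi^i x,y\}$ in Lemmas \ref{lema absorcion i coprimo} and \ref{lema absorcion i divide a p} the absorption instead demands the more delicate manipulations of Saltman deferred to the appendix.
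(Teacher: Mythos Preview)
Your argument is correct and mirrors the paper's proof almost exactly: in both cases you use the hypothesis $a\notin u_2^1(K)$ to get $\bar y\notin(\bar K^\times)^p$, hence $\frac{d\bar y}{\bar y}$ generates $\Omega^1_{\bar K}$, and then you invoke the appropriate Kato isomorphism \eqref{eq Kato ii} or \eqref{eq Kato iii} to write $b$ modulo $u_2^{j+1}(K)$ as a symbol sharing a coordinate with $a$ and absorb via bimultiplicativity. The only cosmetic differences are the order of the two cases and your choice of $\omega=\pi^j$ and $\varpi$ with $v_K(\varpi)=j/p$ versus the paper's $\omega=-\pi^j$.
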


\noindent The proofs being similar to those of \cite[Lem.~1.5,~1.6,~1.7]{LieblichParimalaSuresh}, we only provide them in the Appendix.\\

Let us now finish the proof Theorem \ref{thm period index} in the case $\mathrm{char}(\bar K)=p$. To do so, consider a nontrivial class $\alpha\in\kk_2(K)$. Denoting $u_2^0(K):=\kk_2(K)$ and recalling that $u_2^{\epsilon_K+1}(K)$ is trivial, we know that there exists $0\leq i\leq \epsilon_K$ such that $\alpha\in u_2^i(K)\smallsetminus u_2^{i+1}(K)$.\\

If $i=\epsilon_K$, then isomorphism \eqref{eq Kato iv} already tells us that our class is a symbol.\\

If $i<\epsilon_K$, fix a uniformizer $\pi$ and put $\omega:=-\pi^i$ if $(i,p)=1$ and $\varpi:=-\pi^{i/p}$ if $p|i$. Then we may apply one of the isomorphisms \eqref{eq Kato i}, \eqref{eq Kato ii} or \eqref{eq Kato iii} and see that $\alpha=a+b$, where $a$ is a symbol in $u_2^i(K)\smallsetminus u_2^{i+1}(K)$ (given by the corresponding isomorphism), and $b\in u_2^{i+1}(K)$. We may apply then inductively one of the Lemmas \ref{lema absorcion i coprimo}, \ref{lema absorcion i divide a p} or \ref{lema absorcion i=0}, so that $\alpha=a+c$, with $a$ a symbol as above and $c\in u_2^{\lceil\epsilon_K\rceil}(K)$. If $\epsilon_K\not\in\Z$, then we are done since we know that $u_2^{\lceil\epsilon_K\rceil}(K)=0$ in this case by Proposition \ref{prop isom Kato}. Otherwise, we have two cases:\\

If $p|i$ (including $i=0$), then $a=\{y,\pi\}$ for some $y\in\cal O_K$ and some uniformizer $\pi\in K$. On the other hand, isomorphism \eqref{eq Kato iv} tells us that $c=\{1-\omega x,\pi\}$ for some $x,\omega\in\cal O_K$. We have then
\[\alpha=a+c=\{y(1-\omega ux),\pi\}.\]
If $(i,p)=1$, then $a=\{1-\pi^ix,y\}$ for some $x,y\in\cal O_K^\times$ and a uniformizer $\pi\in K$. In particular, for $m$ such that $im\equiv 1\pmod p$,
\[a=\{1-\pi^ix,y\}=\{1-\pi^ix,\pi^{im}x^my\}=\{1-\pi^ix,\pi x^my\}.\]
Putting then $\pi':=\pi x^my$ and applying isomorphism \eqref{eq Kato iv} to this uniformizer, we see that $c=\{1-\omega z,\pi'\}$ for some $z, \omega\in\cal O_K$. We have then
\[\alpha=a+c=\{(1-\pi^ix)(1-\omega z),\pi'\},\]
which concludes the proof of Theorem \ref{thm period index}.

\section{Some remarks on fields of positive characteristic}\label{sec positive char}

It is natural to wonder whether Theorem \ref{thm period index} holds when $K$ has positive characteristic. As we mention in Remark \ref{rem char not p}, this is indeed the case if $p$ is different from the characteristic of $K$. When $\car(K)=p$, we know by \cite[Thm.~9.2.4, Prop.~9.2.5]{GS} that we have an isomorphism
\[\Omega_K^1/(d(\Omega_K^0)+(\gamma-1)\Omega_K^1)\xrightarrow{\sim}\Br(K)[p],\]
where $\gamma$ is the inverse Cartier operator, that sends the class of $xdy$ to that of the cyclic algebra $[xy,y)$. We also know that $K=\bar K((t))$ with $\bar K$ of dimension $1$, hence with a $p$-basis consisting in a single element $y\in\bar K$. This tells us that $y$ and $t$ form a $p$-basis for $K$ and thus every element in $\Omega_K^1$ is a sum of two terms $\lambda dy+\mu dt$, with $\lambda,\mu\in K$. However, it seems nontrivial to prove that the quotient on the left hand side allows us to reduce such a sum to a single term, which would yield the desired result.\\

In a somewhat surprising way, such a computation is actually quite simple when $p=2$, to the point that it works for \emph{any} field $K$ of characteristic $2$ such that $[K:K^2]=2$.

\begin{proposition}\label{prop car 2}
Let $K$ be a field of characteristic $2$ with $[K:K^2]=2$. Then every element in $\mathrm{Br}(K)[2]$ is represented by a quaternion algebra.
\end{proposition}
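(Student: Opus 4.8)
The plan is to apply directly the isomorphism
\[\Omega_K^1/(d(\Omega_K^0)+(\gamma-1)\Omega_K^1)\xrightarrow{\sim}\Br(K)[2]\]
recalled just above, exploiting the fact that the hypothesis $[K:K^2]=2$ forces the left-hand module to be generated by a single differential, so that no reduction of a sum to one term is ever required.

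First I would fix an element $y\in K\smallsetminus K^2$; the equality $[K:K^2]=2$ says precisely that $\{y\}$ is a $p$-basis of $K$, i.e.\ $K=K^2(y)$ and $\{1,y\}$ is a $K^2$-basis of $K$. A standard computation then shows that $\Omega^1_K$ is a one-dimensional $K$-vector space with basis $dy$. Consequently every element of $\Omega^1_K$ is of the shape $x\,dy$ for some $x\in K$, and \emph{a fortiori} every class in the quotient $\Omega_K^1/(d(\Omega_K^0)+(\gamma-1)\Omega_K^1)$ is represented by such an $x\,dy$.

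Next I would unwind the image of these generators. Under the displayed isomorphism the class of $x\,dy$ maps to the cyclic algebra $[xy,y)$; since $y\notin K^2$ we have $y\in K^\times$, and as $[a,b)$ is a cyclic algebra of degree $p$, in characteristic $2$ it is by definition a quaternion algebra (generated by $u,v$ with $u^2+u=xy$, $v^2=y$, $vu=(u+1)v$). As the isomorphism is in particular surjective, every class $\alpha\in\Br(K)[2]$ equals $[xy,y)$ for some $x\in K$, hence is represented by a quaternion algebra, as claimed.

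I do not expect a genuine obstacle here: the whole point is that, in contrast with the general dimension-$2$ situation discussed above, where the $p$-basis $\{y,t\}$ of $K=\bar K((t))$ has two elements and one must reduce a sum $\lambda\,dy+\mu\,dt$ modulo $d(\Omega_K^0)+(\gamma-1)\Omega_K^1$ to a single term, the condition $[K:K^2]=2$ makes $\Omega^1_K$ one-dimensional and removes this difficulty entirely. The only points needing care are the standard identification of $\Omega^1_K$ as $K\,dy$ and the observation that the cyclic algebra $[a,b)$ is a quaternion algebra when $p=2$.
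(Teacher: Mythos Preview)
Your argument is correct for the hypothesis as literally stated, but that hypothesis is a typo. A field $K$ of characteristic $2$ with $[K:K^2]=2$ has a $2$-basis consisting of a \emph{single} element, and then, as you observe, $\Omega^1_K$ is one-dimensional and there is nothing to do. However, the paper's own proof begins ``Let $(y,t)$ be a $2$-basis of $K$'' and expands $\lambda=\lambda_0^2+\lambda_1^2 y+\lambda_2^2 t+\lambda_3^2 yt$, which only makes sense when $[K:K^2]=4$. The surrounding discussion confirms this: the motivating field $K=\bar K((t))$ with $\dim(\bar K)=1$ has a two-element $p$-basis $\{y,t\}$, hence $[K:K^2]=4$, and the whole point of the proposition is precisely to reduce a sum $\lambda\,dy+\mu\,dt$ to a single term modulo $d(\Omega_K^0)+(\gamma-1)\Omega_K^1$, which the preceding paragraph flags as the nontrivial step. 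So the intended statement is for $[K:K^2]=4$, and your shortcut---relying on $\Omega^1_K$ being one-dimensional---does not apply.

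In short: you have proved the (essentially trivial) case of a one-element $2$-basis, whereas the paper carries out an explicit computation in the two-element case, splitting according to whether the coefficient $\mu_3$ of $yt\,dt$ vanishes and repeatedly using the relations $d(yt)=t\,dy+y\,dt$, $a^2\,da\equiv 0$, and $da^2=0$ to collapse $\lambda\,dy+\mu\,dt$ to a single term $x\,dz$. That computation is the actual content of the proposition; your argument, while internally correct, proves only a much weaker statement that was not the intended one.
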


\begin{proof}
Let $(y,t)$ be a $2$-basis of $K$ and take a differential $\omega =\lambda dy+\mu dt \in \Omega^1_K$. Write
    \begin{gather*}
        \lambda= \lambda_0^2+\lambda_1^2 y+\lambda_2^2 t+ \lambda_3^2 yt,\\
        \mu= \mu_0^2+\mu_1^2 y+\mu_2^2 t+ \mu_3^2 yt,
    \end{gather*}
    with $\lambda_1,\dots,\lambda_4,\mu_1,\dots,\mu_4\in K$. If $\mu_3=0$, then
    \begin{align*}
        \omega&=(\lambda_0^2+\lambda_1^2 y+\lambda_2^2 t+ \lambda_3^2 yt)dy+(\mu_0^2+\mu_1^2 y+\mu_2^2 t)dt\\
        &= (\lambda_0^2+\lambda_1^2 y+\lambda_2^2 t+ \lambda_3^2 yt)dy+\mu_0^2 dt+\mu_1^2 (d(yt)-tdy)+\mu_2^2 t dt\\
        &\equiv ((\lambda_2^2-\mu_1^2) t+ \lambda_3^2 yt)dy \mod (d(\Omega^0_K)+(\gamma-1)\Omega^1_K).
    \end{align*}
    If $\mu_3\neq 0$, then
    \begin{align*}
        \omega&=(\lambda_0^2+\lambda_1^2 y+\lambda_2^2 t+ \lambda_3^2 yt)dy+(\mu_0^2+\mu_1^2 y+\mu_2^2 t+ \mu_3^2 yt)dt\\
        &= \lambda_0^2 dy+\lambda_1^2 y dy+\lambda_2^2(d(ty)-ydt)+ \lambda_3^2 yt dy+(\mu_0^2+\mu_1^2 y+\mu_2^2 t+ \mu_3^2 yt)dt\\
        &\equiv \lambda_3^2 yt dy+((\mu_1^2-\lambda_2^2) y+ \mu_3^2 yt)dt \mod (d(\Omega^0_K)+(\gamma-1)\Omega^1_K)\\
        &\equiv yt d(\lambda_3^2 y+\mu_3^2 t)+(\mu_1^2-\lambda_2^2)\frac{ y d(\lambda_3^2 y+\mu_3^2 t)-\lambda_3^2 y dy}{\mu_3^2}\mod (d(\Omega^0_K)+(\gamma-1)\Omega^1_K)\\
        &\equiv \left(yt+(\mu_1^2-\lambda_2^2)\mu_3^{-2}y \right)d(\lambda_3^2 y+\mu_3^2 t) \mod (d(\Omega^0_K)+(\gamma-1)\Omega^1_K).
    \end{align*}
    In both cases, we deduce that $\omega$ corresponds to a quaternion algebra in $\mathrm{Br}(K)[2]$.
\end{proof}

\section{Homogeneous spaces}\label{sec Serre II}

In this section we are interested in the arithmetic of homogeneous spaces for the field $K$. We prove in particular the following instance of Serre's Conjecture II.

\begin{theorem}\label{thm Serre II}
Let $K$ be a complete discretely valued field of dimension $2$. Let $G$ be a semisimple simply connected $K$-group. Then $H^1(K,G)=1$.
\end{theorem}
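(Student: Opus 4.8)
The plan is to reduce Serre's Conjecture II to a case-by-case analysis according to the type of the simply connected group $G$, exploiting the structural results already established in this paper together with the classification of semisimple groups.

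\medskip

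First I would reduce to the case of an absolutely almost simple simply connected group. By passing to the algebraic closure of $K$ and using the fact that $G$ decomposes, up to an inner twist, as a product of Weil restrictions $R_{L/K}(G')$ of absolutely almost simple simply connected groups $G'$ over finite separable extensions $L/K$, and since $H^1(K, R_{L/K}(G')) = H^1(L, G')$ by Shapiro's lemma, it suffices to prove $H^1(L, G') = 1$ for every such $L$. Here I use that a finite separable extension $L$ of $K$ is again a complete discretely valued field of dimension $2$ (by Example \ref{examples_dim}(e), since $\dim(\bar L) = \dim(\bar K) = 1$ for $\bar L / \bar K$ finite). Thus I am reduced to the absolutely almost simple case, where I would proceed by Dynkin type.

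\medskip

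The bulk of the argument then consists in treating each type. For the classical types ($A_n$, $B_n$, $C_n$, $D_n$) and the exceptional types $G_2$ and $F_4$, I would invoke the theorem of Bayer-Parimala \cite{BP}, which proves Serre's Conjecture II for these types over any field of cohomological dimension $\le 2$ and, more pertinently, over the fields considered here. For the remaining exceptional types, the strategy is to use the "period equals index" property established in Propositions \ref{prop csa1} and \ref{prop csa2}. Concretely, for trialitarian $D_4$, $E_6$ and $E_7$, Serre's Conjecture II is known to follow from suitable cohomological invariants and the splitting behaviour of the associated Tits algebras, which are governed precisely by the index of central simple $K$-algebras; since we now know $\mathrm{per} = \mathrm{ind}$ over $K$, these cases go through by the existing reduction arguments in the literature. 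The last and hardest type is $E_8$, which admits no faithful linear or cohomological invariant reduction of the same kind. Here I expect to split into two cases according to the characteristic: in characteristic $0$, I would study the maximal solvable extension of $K$ of degree $2^a 3^b 5^c$ (the relevant torsion primes for $E_8$), as in Proposition \ref{prop Bogomolov}, and invoke the argument of the first author to kill the $E_8$-torsor over such an extension and descend; in positive characteristic, I would follow the original Bruhat-Tits approach \cite{BT3}, adapting their buildings-and-parahoric arguments to the setting of an imperfect residue field.

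\medskip

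The main obstacle will be the $E_8$ case, and within it the characteristic-$p$ subcase: the Bruhat-Tits method relies on the structure theory of reductive groups over the residue field and on the behaviour of parahoric subgroups, and the delicate point is that the residue field $\bar K$ is only of dimension $1$ but possibly imperfect, so one must verify that the relevant reductions over $\bar K$ (for the anisotropic kernel and for the residue groups) remain valid without a perfectness hypothesis. Once the existence of a rational point over the residue field or over a controlled extension is secured, lifting it back to $K$ via completeness (Hensel-type arguments) should close the proof.
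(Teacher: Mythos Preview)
Your proposal is correct and matches the paper's approach almost exactly: reduction to the absolutely almost simple case, then Bayer--Parimala (together with \cite{BFT} for imperfect base fields, which you should also cite) for the classical, $G_2$ and $F_4$ types, then period-equals-index plus \cite{CTGP} for trialitarian $D_4$, $E_6$, $E_7$, and finally the split into Proposition~\ref{prop Bogomolov} (char~$0$) versus an adapted Bruhat--Tits argument (char~$p$) for $E_8$. The only point you underspecify is the positive-characteristic $E_8$ case, where the paper's key technical inputs are the reduction to $H^1(K^{\mathrm{unr}}/K,G_0)$ via \cite{GilleLNM} and \cite{GilleTame}, McNinch's result that the unipotent radical of the parahoric special fibre is defined and split over the (possibly imperfect) residue field, and Steinberg's theorem applied to the reductive quotient over $\bar K$ using $\mathrm{scd}(\bar K)\le 1$.
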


\begin{proof}[Proof of Theorem \ref{thm Serre II} when $\car(K)=0$]
Basic reductions boil down to the case of $G$ absolutely almost $K$-simple \cite[\S 0]{BFT}. According to \cite{BP} and \cite{BFT}, Theorem \ref{thm Serre II} holds for semisimple simply connected groups of types $A$, $B$, $C$, $D$ (except trialitarian $D_4$), $F_4$ and $G_2$. By Proposition \ref{prop csa2} and \cite[Thm.~1.2(ii)~\&~(iii)]{CTGP}, Theorem \ref{thm Serre II} also holds for semisimple simply connected groups of types $D_4$, $E_6$ and $E_7$. It remains to deal with the case of $E_8$. Let $G_0$ be the split $K$-group of type $E_8$.
Since $G_0=\mathrm{Aut}(G_0)$, it is enough to establish that
$H^1(K,G_0)=1$. Since $K$ has characteristic zero, \cite[Lem.~9.3.2]{GilleLNM} shows that it suffices to prove the following proposition.
\end{proof}

\begin{proposition}\label{prop Bogomolov}
Let $K$ be a complete discretely valued field of characteristic $0$ with residue field $\bar K$ of dimension $1$. Then the $2$-dimension, the $3$-dimension and the $5$-dimension of the maximal solvable extension $L$ of $K$ of degree $2^a3^b5^c$ are all at most $1$.
\end{proposition}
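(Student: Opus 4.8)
The plan is to fix $\ell\in\{2,3,5\}$, to write $p$ for the residue characteristic of $K$, and to exploit both the henselian valuation that $L$ inherits from $K$ and the maximality of $L$. Since $\car(L)=0$, $\dim_\ell(L)$ equals $\mathrm{cd}_\ell(\Gal(L^\sep/L))$, so by \cite[Thm.~6.1.8]{GS} it suffices to show $\Br(L')[\ell]=0$ for every finite extension $L'/L$. I would first record the arithmetic geometry of $L$. For each $\ell\in\{2,3,5\}$ the extension $K(\zeta_{\ell^n})/K$ is abelian of degree $\varphi(\ell^n)$, a $\{2,3,5\}$-number, so $\zeta_{\ell^\infty}\in L$; adjoining $\sqrt[\ell^n]{\pi}$ then yields a solvable $\{2,3,5\}$-extension, whence $\Gamma_L$ is $\ell$-divisible. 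Moreover $L$ is closed under extraction of $\ell$-th roots: for $a\in L^\times$ the Galois closure over $K$ of $L(\sqrt[\ell]{a})$ has group an extension of the prosolvable $\{2,3,5\}$-group $\Gal(L/K)$ by an elementary abelian $\ell$-group, hence is again prosolvable of $\{2,3,5\}$-type, so $\sqrt[\ell]{a}\in L$ by maximality. Reducing $\ell$-th roots of units modulo the maximal ideal, this shows in particular that when $\ell=p$ the residue field $\bar L$ is \emph{perfect}. All of these properties pass to any finite $L'/L$: the value group stays $\ell$-divisible (a finite-index overgroup of $\Gamma_L$ in $\Q$ remains $\{2,3,5\}$-divisible), $\bar{L'}$ stays perfect, and $\bar{L'}/\bar K$ is algebraic, so $\dim(\bar{L'})\le\dim(\bar K)\le 1$.

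If $\ell\neq p$ — in particular whenever $p\notin\{2,3,5\}$ — the result is immediate. Applying the standard bound $\mathrm{cd}_\ell(F)\le \mathrm{cd}_\ell(k)+\dim_{\mathbb{F}_\ell}(\Gamma_F/\ell\Gamma_F)$ for a henselian valued field $F$ with residue field $k$ of characteristic $\neq\ell$ to $F=L$, the ramification term vanishes because $\Gamma_L$ is $\ell$-divisible, while $\mathrm{cd}_\ell(\bar L)\le\dim_\ell(\bar K)\le 1$. Hence $\dim_\ell(L)\le 1$.

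The decisive case is $\ell=p\in\{2,3,5\}$, where the tame bound is unavailable. Here I would fix a finite $L'/L$ and write it as a filtered union $L'=\varinjlim_\beta K'_\beta$ of complete discretely valued subfields $K'_\beta$ of characteristic $0$ with residue field of dimension $\le1$ and, after enlarging $\beta$, with $\zeta_p\in K'_\beta$; then $\Br(L')[p]=\varinjlim_\beta\Br(K'_\beta)[p]$ and $\Br(K'_\beta)[p]\cong\kk_2(K'_\beta)$. By Proposition \ref{prop csa1} every class over $K'_\beta$ is a single symbol, which by Proposition \ref{prop isom Kato} lies in some step $u_2^i(K'_\beta)$ of Kato's filtration and is detected by a graded invariant drawn from $\kk_1(\bar{K'_\beta})$, $\Omega^1_{\bar{K'_\beta}}$, $\bar{K'_\beta}/(\bar{K'_\beta})^{p}$, $H^2_p(\bar{K'_\beta})$ and $H^1_p(\bar{K'_\beta})$. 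I claim each such invariant dies after passing to a large enough $K'_{\beta}\subseteq L'$. Indeed, since $\bar{L'}$ is perfect we have $\bar{L'}^\times/p=0$, $\Omega^1_{\bar{L'}}=0$ and $\bar{L'}=(\bar{L'})^{p}$, which kill the pieces coming from $\kk_1$, from $\Omega^1$ and from $\bar{K'_\beta}/(\bar{K'_\beta})^{p}$; since $\dim(\bar{L'})\le1$ we have $H^2_p(\bar{L'})=\Br(\bar{L'})[p]=0$, killing the unit--unit symbols at the top level; and the remaining ``ramified'' symbols $\{\,\cdot\,,\pi\}$ (those producing the $H^1_p$-piece at level $\epsilon$) are disposed of using $p$-divisibility of $\Gamma_{L'}$: writing a uniformizer $\pi$ of $K'_\beta$ as $\rho^{p}\cdot w$ with $w$ a unit (possible over a suitable $K'_{\beta}$), such a symbol becomes a unit--unit symbol and hence lands in $H^2_p(\bar{L'})=0$. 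Thus every class of $\kk_2(L')$ is trivial, i.e. $\Br(L')[p]=0$.

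The main obstacle is to make this colimit argument rigorous. Proposition \ref{prop isom Kato} is stated for complete \emph{discretely} valued fields, whereas $L'$ has a dense, $p$-divisible value group; one must therefore track how a fixed symbol migrates through the filtration as $\beta$ grows — noting that the critical top level $\epsilon_{K'_\beta}=\tfrac{p\,v(p)}{p-1}$ stays at a fixed \emph{absolute} value while $\epsilon_{K'_\beta}\to\infty$ in $\pi_\beta$-adic terms — and verify that every graded invariant genuinely vanishes in the colimit. The subtlety is concentrated in the finite extensions $L'/L$ whose degree is divisible by $p$: these cannot arise from any solvable quotient of $\Gal(L^\sep/L)$ (since $L^\times=(L^\times)^p$), but they do occur through \emph{non-solvable} $\{2,3,5\}$-quotients such as $A_5$, so one cannot dispose of them by simply adjoining a $p$-th root of a uniformizer inside $L'$. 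Checking that the three vanishing mechanisms above — perfectness of $\bar{L'}$, triviality of $\Br(\bar{L'})[p]$, and $p$-divisibility of $\Gamma_{L'}$ — really cover all graded pieces for such $L'$ is where I expect the real work to lie.
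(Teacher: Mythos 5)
Your reduction to $\Br(L')[\ell]=0$, your structural observations about $L$ (closure under $\ell$-th roots, $\ell$-divisibility of the value group, perfectness of the residue field when $\ell=p$, and the persistence of these properties under finite extensions $L'/L$), and your treatment of the case $\ell\neq p$ are all correct, and they isolate exactly the two facts the paper also exploits. The genuine gap is in the case $\ell=p$. There you reduce to showing that every symbol over some complete discretely valued $K'_\beta\subseteq L'$ dies in a larger $K'_{\beta'}$, and you propose to see this on the graded pieces of Kato's filtration; but the claim that your three vanishing mechanisms ``cover all graded pieces'' is precisely the step you have not carried out, and it is not routine. A symbol sitting at level $i$ of the filtration of $K'_\beta$ lands at level $e i$ of the filtration of $K'_{\beta'}$, where $e$ is the ramification index; since the extensions you must pass through have $e$ a power of $p$, the \emph{type} of the graded piece changes (a class detected in $\Omega^1_{\bar K'_\beta}$ at a level prime to $p$ reappears at a level divisible by $p$, where the graded piece is of a different nature), and the induced maps between graded pieces are not simply the functorial maps on $\Omega^\bullet$ — computing them is exactly the content of the restriction computations in Proposition \ref{prop isom Kato}, which you would have to redo for arbitrary ramified towers and then control in a colimit where $\epsilon_{K'_\beta}\to\infty$ in $\pi_\beta$-adic terms. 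As written, the argument for $\ell=p$ is a plan rather than a proof, as you yourself acknowledge.

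The paper avoids all of this by never invoking $\kk_2$, Proposition \ref{prop csa1} or the filtration. Given $\alpha$ over a finite $M/L$, it writes $M=M_0L$ with $M_0/K$ finite and $\alpha$ defined over $M_0$, and restricts in two stages along explicit subfields of $L$: first along $K'_\infty=\bigcup_n K(\sqrt[\ell^n]{y})$, where $\bar y$ generates $\bar K$ over $\bar K^p$ — this keeps the valuation \emph{discrete} while making the residue field perfect of dimension $\le 1$ when $\ell=p$, so that by the classical results of Serre (\emph{Local Fields}, XII, \S 3) the entire group $\Br(M_0K'_\infty)\{\ell\}$ is identified with residue characters in $H^1(\cdot,\Q_\ell/\Z_\ell)$; and then along the ramified tower $\bigcup_n K(\sqrt[\ell^n]{y},\sqrt[\ell^n]{\pi})$, using that restriction multiplies the residue character by the ramification index, so a class of order $\ell^m$ dies once the ramification degree is divisible by $\ell^m$. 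This sidesteps both the dense-value-group issue and your worry about degree-$p$ extensions of $L$ coming from non-solvable $\{2,3,5\}$-quotients, since every field in the argument is a complete discretely valued finite extension of $K$ and the class is already killed inside $M$. I would recommend replacing your $\ell=p$ argument by this two-stage restriction, or else fully carrying out the graded-piece bookkeeping, which amounts to substantially more work than the statement deserves.
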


\begin{proof}
We denote by $p$ the characteristic exponent of $\bar K$ and we fix $\ell\in \{2,3,5\}$. By Lemma \ref{lem_skip}, in order to prove that $\mathrm{dim}_\ell(L)\leq 1$, we need to prove that the $\ell$-primary torsion $\Br(M)\{\ell\}$ of $\Br(M)$ is trivial for every finite extension $M/L$. Consider then such an extension and fix a class $\alpha\in\Br(M)\{\ell\}$, whose triviality we want to settle. There exists a finite extension $M_0/K$ such that $M=M_0L$ and $\alpha$ comes from an element $\alpha_0$ in $\Br(M_0)\{\ell\}$. Now, since $\bar K$ has dimension $1$, we may fix an element $\bar y$ such that $\bar K=\bar K^p(\bar y)$ and a lift $y\in K$ of $\bar y$.  We may also fix a uniformizer $\pi$ of $K$. Then, for each $n \geq 1$, the extension $K_n:=K(\sqrt[\ell^n]{y},\sqrt[\ell^n]{\pi})/K$ is solvable and the degree of its Galois closure divides $\ell^{2n}\varphi(\ell^n)=(\ell-1)\ell^{3n-1}$, which is a number whose prime factors all belong to $\{2,3,5\}$. We deduce that $K_n$ is contained in $L$ for any $n\in\N$. We consider the fields $K'_\infty:=\bigcup_n K(\sqrt[\ell^n]{y})$ and $K_\infty:=\bigcup_n K_n$ inside $L$. The residue field of $K'_\infty$ has dimension $\leq 1$ and is perfect if $\ell=p$. Hence by \cite[XII, \S3, Thm.~2]{SerreCorpsLocaux} and \cite[XII, \S3, Exer.~3]{SerreCorpsLocaux} we have $\Br(K'_\infty)\{\ell\}=H^1(\bar K'_\infty,\Q_\ell/\Z_\ell)$. Moreover, by \cite[XII, \S3, Exer.~2]{SerreCorpsLocaux}, we know that for any finite extension $L'_\infty/K'_\infty$ with ramification index $e_{L'_\infty/K'_\infty}$, the restriction map $\Br(K'_\infty)\to \Br(L'_\infty)$ corresponds to the composite of the restriction map $H^1(\bar K'_\infty,\Q_\ell/\Z_\ell)\to H^1(\bar L'_\infty,\Q_\ell/\Z_\ell)$ and multiplication by $e_{L'_\infty/K'_\infty}$. In particular, if we denote by $\ell^m$ the order of $\alpha_0$ in $\Br(M_0)$, we know that its restriction to any finite extension of $M'_0$ with ramification degree divisible by $\ell^m$ is trivial. We deduce that the restriction of $\alpha$ to $M_0K_\infty$ is trivial, and hence so is $\alpha$ since $K_\infty \subset L$.
\end{proof}

\begin{remarque}
Extending the proof of Proposition \ref{prop Bogomolov} to any prime $\ell$, we actually proved that $\dim (K^{\solv})\leq 1$, where $K^\solv$ is the maximal solvable extension of $K$.
\end{remarque}

\begin{proof}[Proof of Theorem \ref{thm Serre II} when $\car(K)>0$]
When $K$ has positive characteristic, the same reductions take us to the case of the split group of type $E_8$, which we denote by $G_0$. Here, the proof of Theorem \ref{thm Serre II} is a bit more involved and uses Bruhat-Tits theory. We denote by $K^{\mathrm{unr}}$ (resp.\ $K^{\mathrm{tame}}$) the unramified closure (resp.\ the tamely ramified closure) of $K$. The Galois group of $K^{\mathrm{sep}}/K^{\mathrm{tame}}$ is the wild inertia group, this is a pro-$p$-group. According to \cite[Thm.~8.4.1.(1)]{GilleLNM},  we have $H^1(K^{\mathrm{tame}},G_0)=1$. Next we have $H^1(K^{\mathrm{tame}}/K^{\mathrm{unr}},G_0)=1$ according to \cite[Prop.\ 3.1]{GilleTame}. It remains then to establish that $H^1(K^{\mathrm{unr}}/K,G_0)=1$ by the method of Bruhat and Tits \cite{BT3}. We need to check that the argument extends in this case to the situation when $k$ is not perfect.\\

We can reason at a finite level and we shall prove that $H^1(L/K,G_0)=1$ for a given finite unramified Galois extension $L/K$. We put $\Gamma:=\Gal(L/K)$ and denote by $\cal O_L/\cal O_K$ the Galois extension of valuation rings and by $\bar L/\bar K$ the residual Galois extension.

Let $\cB(G_{0,L})$ be the Bruhat-Tits building of $G_{0,L}$. It comes equipped with an action of $G_0(L) \rtimes \Gamma$ \cite[\S 4.2.12]{BT2}. Let $(B_0,T_0)$ be a Killing couple for $G_0$. The split $K$--torus  $T_0$ defines an apartment $\cA(T_{0,L})$ of $\cB(G_{0,L})$ which is preserved by the action of $N_{G_0}(T_0)(L) \rtimes \Gamma$.

We are given a Galois cocycle $z :\Gamma \to G_0(L)$; it defines a section
\[u_z: \Gamma \to G(L) \rtimes \Gamma,\, \sigma \mapsto z_\sigma \sigma,\]
of the projection map $G_0(L) \rtimes \Gamma \to\Gamma$. This provides an action of $\Gamma$ on $\cB(G_{0,L})$ called the twisted action with respect to the cocycle $z$. The Bruhat-Tits fixed point theorem \cite[\S 3.2]{BT1} provides a point $y \in \cB(G_{0,L})$ which is fixed by the twisted action. This point belongs to an apartment and since $G_0(L)$ acts transitively on the set of apartments of $\cB(G_{0,L})$ there exists a suitable $g \in G_0(L)$ such that $g^{-1}\cdot y=x \in \cA(T_{0,L})$. We observe that $\cA(T_{0,L})$ is fixed pointwise by $\Gamma$ (for the standard action), so that $x$ is fixed under $\Gamma$. We consider the equivalent cocycle $z'_\sigma= g^{-1} \, z_\sigma \, \sigma(g)$ and compute
\begin{align*}
 z'_\sigma \cdot x& =  z'_\sigma \cdot \sigma(x) \\
 & = (g^{-1} \, z_\sigma \, \sigma(g)) (\sigma(g^{-1})\cdot \sigma(y) )\\
 & =  g^{-1} \cdot ( ( z_\sigma \sigma)\cdot y ) \\
 &=  g^{-1}\cdot y \\
 &= x ,
\end{align*}
where the second to last equality follows from the fact that $y$ is fixed under the twisted action.

Thus, without loss of generality, we may assume that $z_\sigma\cdot x=x$ for each $\sigma \in \Gamma$. We put $P_x= \Stab_{G_0(L)}(x)$; since $x$ is fixed by $\Gamma$, the group $P_x$ is preserved by the action of $\Gamma$. Let $\cP_x$ be the Bruhat-Tits $\cO_L$-group  scheme attached to $x$.
It is smooth, we have  $\cP_x(\cO_L)= P_x$ and we know that its special fiber $\cP_x \times_{\cO_L} \bar L$ is smooth connected \cite[Prop.~4.6.32, 5.2.9]{BT2}.
According to \cite[Prop.~4.3.7.(a)]{McNinch}, the unipotent radical $U_x$ of $\cP_x \times_{\cO_L} \bar L^{\mathrm{perf}}$ is defined over $\bar L$ and split over $\bar L$. It follows that the quotient $M_x= (\cP_x \times_{\cO_L} \bar L)/ U_x$ is a reductive $\bar L$-group.
An important point is that the action of $\Gamma$ on $\cP_x(\cO_L)$ arises from a semilinear action of $\Gamma$ on the $\cO_L$--scheme $\cP_x$ as explained in the beginning of \cite[\S2]{PrasadBT}. By Galois descent, the affine $\cO_L$--group scheme $\cP_x$ descends to an $\cO$-group scheme named  $\cP$. Similarly, $U_x$ descends to a unipotent $k$-group $U$ which is $k$-split in view of \cite[Ch.~V, \S7]{Oesterle} and $M=(\cP \times_{\cal O_K} \bar K)/U$ is reductive. 

We have that $[z_\sigma]$ belongs to the image of $H^1(\Gamma, \cP(\cO_L)) \to H^1(\Gamma, G_0(L))$. We claim that we have the following bijections
\[H^1(\Gamma, \cP(\cO_L)) \cong H^1(\Gamma, (\cP \times_{\cal O_K} \bar K)(\bar L)) \cong H^1(\Gamma, M(\bar L)).\]
The left-hand side map is bijective in view of Hensel's Lemma \cite[Exp.~XXIV, Prop.~8.1]{SGA3}. The right-hand side 
isomorphism refers to \cite[Lemme 7.3]{GilleMoretBailly}. Since $\mathrm{scd}_p(\bar K)\leq \dim_p(\bar K)=1$, we have $\mathrm{scd}(\bar K)\leq 1$, so that $H^1(\bar K,M)=1$ by Serre's Conjecture I (Steinberg's Theorem \cite[III, \S2.3, Thm.~1']{SerreCohGal}). It follows that $H^1(\Gamma, \cP(\cO_L))=1$ so that $[z_\sigma]=1 \in H^1(\Gamma, \cP(\cO_L))$.
\end{proof}

In view of Examples \ref{examples_dim} (b) and (d), we obtain the following statement.

\begin{corollary} \label{cor_BT}
We assume that $\bar K$ is separably closed of characteristic exponent $p\geq 1$ and that $[\bar K:\bar K^p] \leq p$. Let $G$ be a semisimple simply connected algebraic group over $K$. Then $H^1(K,G)=1$.
\end{corollary}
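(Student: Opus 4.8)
The plan is to reduce the statement to Theorem \ref{thm Serre II} by showing that the hypotheses on $\bar K$ force $\dim(K)\leq 2$. The corollary carries no genuinely new content: all the hard work already lives in Theorem \ref{thm Serre II}, so the argument will be a matter of bookkeeping with the notions of dimension recalled in \S\ref{sec prel cd}, exactly as the sentence preceding the statement (``In view of Examples \ref{examples_dim} (b) and (d)'') suggests.

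First I would control $\dim(\bar K)$ by splitting into cases according to whether $\bar K$ is perfect. If $p=1$, or if $p\geq 2$ and $[\bar K:\bar K^p]=1$, then $\bar K$ is perfect; being also separably closed it is algebraically closed, so $\dim(\bar K)=0$ by Lemma \ref{lem_skip}(i). In the remaining case $p\geq 2$ and $[\bar K:\bar K^p]=p$, Example \ref{examples_dim}(d), applied to the field $\bar K$ itself, gives $\dim(\bar K)=1$. Thus in every case $\dim(\bar K)\leq 1$.

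Next I would pass from $\bar K$ to $K$. By Example \ref{examples_dim}(e), which asserts $\dim(K)=\dim(\bar K)+1$ for any complete discretely valued field, I conclude $\dim(K)\leq 2$. Alternatively, in the cases where $\bar K$ is algebraically closed one may invoke the third bullet of Example \ref{examples_dim}(b) directly: since $K$ is complete it equals its own completion, which is trivially separable over $K$, so that $\dim(K)=1$ there. With $\dim(K)\leq 2$ secured, Theorem \ref{thm Serre II} applies and yields $H^1(K,G)=1$.

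The only point requiring care, and the closest thing to an obstacle, is that Theorem \ref{thm Serre II} is phrased for dimension exactly $2$, whereas the case where $\bar K$ is algebraically closed produces $\dim(K)=1$. I would therefore note that the proof of Theorem \ref{thm Serre II} (the type-by-type reductions together with the Bruhat--Tits argument, all of which rest on inputs valid in dimension $\leq 2$) goes through verbatim for $\dim(K)\leq 2$, the lower-dimensional case being strictly more favorable; equivalently, the statement of Serre's Conjecture II for fields of dimension $\leq 2$ already subsumes the dimension $1$ situation. No computation beyond what Theorem \ref{thm Serre II} provides is needed.
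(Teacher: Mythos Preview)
Your proposal is correct and follows exactly the route the paper indicates: the sentence ``In view of Examples \ref{examples_dim} (b) and (d)'' is the entire proof in the paper, and you have unpacked it faithfully by splitting into the perfect and imperfect residue cases, invoking Example \ref{examples_dim}(e) to pass to $K$, and then applying Theorem \ref{thm Serre II}. Your closing remark about $\dim(K)=1$ versus $\dim(K)=2$ is a fair point of care that the paper leaves implicit; the proof of Theorem \ref{thm Serre II} indeed only uses $\dim(\bar K)\leq 1$, so nothing is lost.
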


\begin{corollary}\label{cor_curve}
Let $k$ be a separably closed field of characteristic exponent $p\geq 1$ satisfying $[k:k^p] \leq p$. Let $G$ be a semisimple simply connected algebraic $k$-group and let $C$ be a smooth projective $k$-curve which is geometrically connected.
Then the localization map $H^1_{\mathrm{fppf}}(C,G) \to H^1(k(C),G)$ is onto.
\end{corollary}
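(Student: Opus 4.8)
The plan is to prove surjectivity by extending a torsor from the generic point of $C$ to the whole curve, correcting it one ``bad'' point at a time, where the local input is exactly Corollary \ref{cor_BT}. Fix a class $\xi\in H^1(k(C),G)$. Since $G$ is smooth affine of finite presentation, a standard spreading-out argument (using $k(C)=\varinjlim_U \mathcal{O}_C(U)$ and the finite presentation of $G$-torsors) represents $\xi$ by a $G$-torsor $\mathcal{T}$ over a dense open subscheme $U\subseteq C$. As $C$ is an irreducible curve, $S:=C\smallsetminus U=\{x_1,\dots,x_n\}$ is a finite set of closed points, and it remains to fill in $\mathcal{T}$ at each $x_i$.

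The heart of the argument is the local analysis at each $x_i$. Let $\widehat{\mathcal{O}}_i:=\widehat{\mathcal{O}}_{C,x_i}$ be the completed local ring, a complete discrete valuation ring with fraction field $F_i$ and residue field $k(x_i)$. Because $k$ is separably closed, the finite extension $k(x_i)/k$ is purely inseparable, so $k(x_i)$ is again separably closed; moreover the identity $[E:E^p]=[F:F^p]$, valid for \emph{every} finite extension $E/F$ in characteristic $p$ (it follows from $[E^p:F^p]=[E:F]$, since Frobenius is a field isomorphism $E\xrightarrow{\sim}E^p$ carrying $F$ onto $F^p$), yields $[k(x_i):k(x_i)^p]=[k:k^p]\leq p$. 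Thus $F_i$ is a complete discretely valued field whose residue field satisfies the hypotheses of Corollary \ref{cor_BT}, and therefore $H^1(F_i,G)=1$. In particular the restriction $\mathcal{T}|_{\mathrm{Spec}\,F_i}$ is a trivial $G$-torsor and admits a trivialization $\tau_i$.

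Next I would glue $\mathcal{T}|_U$ with the trivial torsor over $\widehat{\mathcal{O}}_i$ along $\mathrm{Spec}\,F_i$ by means of $\tau_i$. This is a formal patching of Beauville--Laszlo type for torsors under the smooth affine group $G$: giving a $G$-torsor on $C$ is equivalent to giving a $G$-torsor on $U$, a $G$-torsor on each $\mathrm{Spec}\,\widehat{\mathcal{O}}_i$, together with isomorphisms of their restrictions over the overlap $\mathrm{Spec}\,F_i$. Note that one genuinely needs the completion here: the class $\xi$ need not be trivial over $k(C)=\mathrm{Frac}(\mathcal{O}_{C,x_i})$, only over $F_i$, so the torsor cannot simply be extended as the trivial torsor over the (uncompleted) local ring. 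Carrying out the gluing at every $x_i$ produces a $G$-torsor $\mathcal{T}'$ on $C$ restricting to $\mathcal{T}|_U$, hence to $\xi$ at the generic point, which proves that the localization map is onto. Since $G$ is smooth, fppf and \'etale cohomology coincide throughout, so this also settles the statement for $H^1_{\mathrm{fppf}}$.

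The main obstacle is the local triviality statement $H^1(F_i,G)=1$, which is precisely Corollary \ref{cor_BT} and ultimately rests on Theorem \ref{thm Serre II}; once it is in hand, what remains is the residue-field bookkeeping at each $x_i$ (separable closedness and the $p$-degree bound computed above) and a careful invocation of the Beauville--Laszlo gluing, for which one should verify that $G$-torsors satisfy the required formal descent along $\mathrm{Spec}\,\widehat{\mathcal{O}}_i\to C$.
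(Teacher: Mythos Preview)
Your proposal is correct and takes essentially the same approach as the paper: reduce to a local problem at each closed point, verify the residue-field hypotheses (separably closed, $p$-degree $\leq p$), and apply Corollary~\ref{cor_BT} to get triviality over the completed local field. The only cosmetic difference is that the paper packages your Beauville--Laszlo gluing step as a direct citation of Harder's Lemma \cite[Lem.~4.1.3]{HarderGroupSchemes} (see also \cite[Cor.~A.8]{GP}), which gives the local-to-global extension criterion in one stroke.
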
 

\begin{proof}
For each point $P$ of $C$, we denote by $K_P$ the completion  of $k(C)$ with respect to the valuation defined by $P$, by $\cal O_P$ its valuation ring and by $\bar K_P$ its residue field.
We are given a class $\gamma \in H^1(k(C),G)$ and want to show that it extends to $C$.
In view of Harder's Lemma \cite[Lem.~4.1.3]{HarderGroupSchemes} (see also \cite[Cor.~A.8]{GP}), it is enough to show that the image of $\gamma$ in $H^1(K_P,G)$ comes from $H^1(\cal O_P,G)$ for each closed point $P$ of $C$.
According to \cite[IX, \S3.2, Thm.~2.(c)]{Bourbaki}, the field $K_P$ is isomorphic (but not necessarily $k$-isomorphic) to $\bar K_P((t))$. Since $\bar K_P$ is separably closed and satisfies $[\bar K_P:\bar K_P^p] \leq p$, Corollary \ref{cor_BT}
shows that $H^1(\bar K_P,G)=1$. The local condition is then trivially fulfilled, so that the proof is complete.
\end{proof}

The case of the projective line is specially interesting.

\begin{corollary}\label{cor_line}
Let $k$ be a  field of characteristic exponent $p\geq 1$ satisfying $[k:k^p] \leq p$.
Let $G$ be a semisimple simply connected algebraic $k$-group. Then the following hold:
\begin{enumerate}[(1)]
    \item\label{item H1 Zar} $H^1_{\mathrm{fppf}}(\mathbb{P}^1_{k^{\sep}}, G)=H^1_{\mathrm{Zar}}(\mathbb{P}^1_{k^{\sep}}, G)$ and  $H^1(k^{\sep}(t),G)=1$.
    \item\label{item H1 droite affine ks} $H^1(k^{\sep}[t],G)=1$.
    \item\label{item H1 droite affine k} $H^1(k,G)= H^1(k[t],G)$.
\end{enumerate}
\end{corollary}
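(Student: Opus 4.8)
The plan is to reduce all three statements to the behaviour of $G$-torsors over the affine and projective lines of the separably closed field $F:=k^{\sep}$, and then to descend from $F$ to $k$. First I record the reductions over $F$. Since $k^{\sep}/k$ is separable algebraic, a $p$-basis of $k$ remains a $p$-basis of $F$, so $[F:F^p]=[k:k^p]\le p$; as $F$ is separably closed, $G_F$ is split and $H^1(F,G)=1$. Moreover every finite extension $E/F$ is purely inseparable, hence again separably closed, and the multiplicativity $[E:E^p]=[F:F^p]$ for finite extensions gives $[E:E^p]\le p$; in particular $H^1(E,G_E)=1$ for every such $E$. These fields $E$ are exactly the residue fields at the closed points of $\mathbb{A}^1_F$ and $\mathbb{P}^1_F$, and by Example \ref{examples_dim} they all have dimension $\le 1$. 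Throughout, $G$ is smooth, so fppf and \'etale torsors coincide.

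The key local input is statement (2), the triviality $H^1(k^{\sep}[t],G)=H^1(\mathbb{A}^1_F,G)=1$. I would obtain it from the theorem of Raghunathan--Ramanathan on principal bundles over the affine line: for the \emph{split} group $G_F$ this yields $H^1(\mathbb{A}^1_F,G)=H^1(F,G)=1$. Establishing this vanishing over a possibly imperfect base $F$ is the main obstacle of the whole corollary; what makes it work is precisely that $F$ and all its finite (purely inseparable) extensions are separably closed with $p$-rank $\le p$, so that $G$ is split and $H^1$ vanishes at every residue field of $\mathbb{A}^1_F$ — a state of affairs secured, via completions, by Corollary \ref{cor_BT}.

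Granting (2), statement (1) is quick. Cover $\mathbb{P}^1_F=U_0\cup U_\infty$ by the two standard affine charts $U_0=\mathrm{Spec}\,F[t]$ and $U_\infty=\mathrm{Spec}\,F[t^{-1}]$, each isomorphic to $\mathbb{A}^1_F$, with $U_0\cap U_\infty=\mathrm{Spec}\,F[t,t^{-1}]$. By (2) any $G$-torsor $E$ on $\mathbb{P}^1_F$ is trivial on $U_0$ and on $U_\infty$; choosing trivializations, $E$ is reconstructed from a single transition element of $G(F[t,t^{-1}])$ relative to the \emph{Zariski} cover $\{U_0,U_\infty\}$, which already gives $H^1_{\mathrm{fppf}}(\mathbb{P}^1_F,G)=H^1_{\mathrm{Zar}}(\mathbb{P}^1_F,G)$. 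Since the generic point of $\mathbb{P}^1_F$ lies in $U_0$, the restriction $E_{F(t)}$ is that of the trivial torsor $E|_{U_0}$, hence trivial; thus every $G$-torsor on $\mathbb{P}^1_F$ is generically trivial. As Corollary \ref{cor_curve} (applied with $C=\mathbb{P}^1_F$) shows that the localization $H^1_{\mathrm{fppf}}(\mathbb{P}^1_F,G)\to H^1(F(t),G)$ is onto, we conclude $H^1(k^{\sep}(t),G)=1$.

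Finally, statement (3). The zero section $i_0$ and the projection $\pi\colon\mathbb{A}^1_k\to\mathrm{Spec}\,k$ satisfy $i_0^*\circ\pi^*=\mathrm{id}$, so $\pi^*\colon H^1(k,G)\to H^1(k[t],G)$ is injective and only surjectivity remains. By (2) every $G$-torsor on $\mathbb{A}^1_k$ becomes trivial over $\mathbb{A}^1_{k^{\sep}}$, so with $\Gamma=\Gal(k^{\sep}/k)$ one gets $H^1(k[t],G)\cong H^1(\Gamma,G(k^{\sep}[t]))$ and, likewise, $H^1(k,G)\cong H^1(\Gamma,G(k^{\sep}))$, compatibly with the split $\Gamma$-equivariant inclusion $G(k^{\sep})\hookrightarrow G(k^{\sep}[t])$ and its retraction $\mathrm{ev}_0$. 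Surjectivity of $\pi^*$ then amounts to the vanishing of the cohomology of the congruence kernel $N=\ker(\mathrm{ev}_0)$: filtering $N$ by the powers of $t$, the graded pieces are isomorphic to $\mathrm{Lie}(G)\otimes_k k^{\sep}$, whose $\Gamma$-cohomology vanishes by the normal basis theorem (additive Hilbert 90), and a d\'evissage gives $H^1(\Gamma,{}_cN)=1$ for every twist $c$. Equivalently, (3) is the Raghunathan--Ramanathan theorem over the ground field $k$, of which (2) is the special case $k=k^{\sep}$; as in (2), the delicate point is the control of $N$ (equivalently, of the affine line) over an imperfect field.
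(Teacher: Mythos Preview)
Your arguments for $(1)$ and $(3)$, \emph{granting} $(2)$, are fine, and your $(3)$ is essentially the paper's (via Raghunathan--Ramanathan together with $(2)$). The gap is in $(2)$ itself, which you attack first. The form of Raghunathan--Ramanathan valid over an arbitrary base field is only
\[
H^1(k,G)\;=\;\ker\bigl(H^1(k[t],G)\longrightarrow H^1(k^{\sep}[t],G)\bigr);
\]
specialised to $k=F=k^{\sep}$ this becomes a tautology and says nothing about the full set $H^1(F[t],G)$. The strong statement $H^1(\mathbb{A}^1_F,G)=H^1(F,G)$ you invoke is precisely item $(2)$ in the imperfect case, so the appeal to RR here is circular. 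You recognise this yourself (``the main obstacle of the whole corollary''), but the remarks about residue fields and Corollary~\ref{cor_BT} do not close the gap: knowing that $H^1$ vanishes at every closed point of $\mathbb{A}^1_F$ does not by itself force a torsor on $\mathbb{A}^1_F$ to be trivial, and your congruence-filtration d\'evissage is again a sketch of RR rather than a proof of $(2)$.

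The paper reverses your order and establishes $(1)$ first, without assuming $(2)$. From $H^1(k^{\sep},G)=1$, the Grothendieck--Harder theorem \cite[Thm.~3.8(a)]{GilleAffine} already gives $H^1_{\mathrm{fppf}}(\mathbb{P}^1_{k^{\sep}},G)=H^1_{\mathrm{Zar}}(\mathbb{P}^1_{k^{\sep}},G)$; combined with the surjectivity of localisation from Corollary~\ref{cor_curve} (this is where Corollary~\ref{cor_BT}, and hence the new Serre~II input, actually enters), one obtains $H^1(k^{\sep}(t),G)=1$ because Zariski-locally trivial torsors are generically trivial. Item $(2)$ is then \emph{deduced from} $(1)$ via the exact sequence
\[
H^1_{\mathrm{Zar}}(k^{\sep}[t],G)\longrightarrow H^1_{\mathrm{fppf}}(k^{\sep}[t],G)\longrightarrow H^1(k^{\sep}(t),G)
\]
of \cite[Cor.~3.10]{GilleAffine}, together with $\mathrm{Pic}(k^{\sep}[t])=0$, which kills the Zariski term. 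Only then does RR, in the weak kernel form above, yield $(3)$.
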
 

\begin{proof}
We prove \eqref{item H1 Zar}. Since $H^1(k^{\sep},G)=1$, the Grothendieck-Harder theorem \cite[Thm.~3.8.(a)]{GilleAffine} shows that $H^1_{\mathrm{fppf}}(\mathbb{P}^1_{k^{\sep}}, G)=H^1_{\mathrm{Zar}}(\mathbb{P}^1_{k^{\sep}}, G)$.
On the other hand, Corollary \ref{cor_curve} shows that the map  $H^1_{\mathrm{fppf}}( \mathbb{P}^1_{k^{\sep}}, G) \to H^1(k^{\sep}(t),G)$ is onto, so that the map $H^1_{\mathrm{Zar}}(\mathbb{P}^1_{k^{\sep}}, G) \to H^1(k^{\sep}(t),G)$ is onto. Since this map factors through $H^1_{\mathrm{Zar}}(k^\sep(t),G)$, we see that $H^1(k^{\sep}(t),G)=1$.\\

We prove \eqref{item H1 droite affine ks}. According to \cite[Cor.~3.10]{GilleAffine} we have an exact sequence of pointed sets
\[H^1_{\mathrm{Zar}}(k^{\sep}[t],G) \to  H^1_{\mathrm{fppf}}( k^{\sep}[t],G)\to H^1( k^{\sep}(t),G)\]
so that $H^1_{\mathrm{Zar}}( k^{\sep}[t],G) =  H^1_{\mathrm{fppf}}( k^{\sep}[t],G)$ in view of (1). Let $T$ be a maximal $k^{\sep}$-split torus of $G$. The same statement provides a surjection
\[\Hom_{k^\sep}(T,\gm) \otimes_{\mathbb{Z}} \mathrm{Pic}(k^{\sep}[t]) \to 
H^1_{\mathrm{Zar}}( k^{\sep}[t],G).\]
Since the ring  $k^{\sep}[t]$ is principal, we have $\mathrm{Pic}(k^{\sep}[t]) =0$, so that $H^1_{\mathrm{Zar}}( k^{\sep}[t],G)=1$. Thus $H^1_{\mathrm{fppf}}(k^{\sep}[t],G)=1$.\\

We prove \eqref{item H1 droite affine k}. Raghunathan-Ramanathan's Theorem \cite{RR} states that
\[H^1(k,G)= \ker( H^1(k[t],G) \to H^1(k^{\sep}[t],G)).\]
From (2), we obtain $H^1(k,G)= H^1(k[t],G)$, as desired.
\end{proof}


In the characteric zero case, we obtain
 a generalization of Kneser's theorem 
by applying a result of Colliot-Th\'el\`ene, Parimala
and the first author \cite[Thm.\ 2.1]{CTGP}.

\begin{theorem}\label{thm_BT+}
Let $K$ be a complete discretely valued field of dimension $2$ and of characteristic zero. Let $G$ be a semisimple $K$-group and consider its simply connected cover 

\vskip-3mm

\[1 \to \mu \to \widetilde G \to G \to 1.\]

\vskip-3mm

\noindent Then the following hold:
\begin{enumerate}[(a)]
    \item\label{item bijection} The boundary map $\partial:H^1(K,G) \to H^2(K,\mu)$ is bijective.
    \item\label{item isotropy} If the $K$-group $G$ is not purely of type $A$, it is isotropic.
\end{enumerate}
\end{theorem}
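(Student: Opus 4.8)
The plan is to deduce both assertions from the conditional Kneser-type result \cite[Thm.~2.1]{CTGP}, whose hypothesis---Serre's conjecture II over the base field together with all of its finite extensions---is exactly what Theorem \ref{thm Serre II} provides. First I would verify that this hypothesis holds for $K$. Since $K$ has characteristic zero it is perfect, and $\dim(K)=2$ gives $\mathrm{cd}(K)=2$. I would then observe that every finite extension $L/K$ is again a complete discretely valued field of dimension $2$: indeed $L$ is complete discretely valued with residue field $\bar L$ finite over $\bar K$, and as the dimension neither drops to $0$ (a finite extension of $\bar K$, which is neither algebraically closed nor real closed, is not algebraically closed, cf.~Lemma \ref{lem_skip}(i)) nor increases under finite extensions, one obtains $\dim(\bar L)=1$ and hence $\dim(L)=\dim(\bar L)+1=2$ by Example \ref{examples_dim}(e). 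Theorem \ref{thm Serre II} then yields $H^1(L,H)=1$ for every semisimple simply connected $L$-group $H$ and every finite $L/K$, which is precisely the input demanded by \cite[Thm.~2.1]{CTGP}.

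Granting this, \cite[Thm.~2.1]{CTGP} delivers at once the bijectivity of $\partial$ in part (\ref{item bijection}) and the isotropy statement in part (\ref{item isotropy}). For transparency I would also record the injectivity half of (\ref{item bijection}) by hand, as it follows directly from Theorem \ref{thm Serre II}. Given $\gamma\in H^1(K,G)$, twisting the central extension by a cocycle representing $\gamma$ produces an exact sequence $1\to\mu\to {}_\gamma\widetilde G\to {}_\gamma G\to 1$ in which ${}_\gamma\widetilde G$ is an inner form of $\widetilde G$, hence again semisimple simply connected. The twisting bijection $H^1(K,{}_\gamma G)\xrightarrow{\sim}H^1(K,G)$ identifies the fibre $\partial^{-1}(\partial(\gamma))$ with the kernel of the boundary map $\partial_\gamma$ attached to the twisted sequence, and this kernel is the image of $H^1(K,{}_\gamma\widetilde G)$ in $H^1(K,{}_\gamma G)$. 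By Theorem \ref{thm Serre II} the set $H^1(K,{}_\gamma\widetilde G)$ is trivial, so the fibre reduces to $\{\gamma\}$ and $\partial$ is injective.

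The genuinely deep content---surjectivity of $\partial$ and the isotropy of groups not purely of type $A$---is imported wholesale from \cite[Thm.~2.1]{CTGP}; the hard part on our side is therefore not the group-theoretic mechanism but the verification that Serre's conjecture II propagates to \emph{all} finite extensions of $K$. This is exactly where the possibly imperfect residue field must be handled with care, and it is guaranteed by the stability of the class of dimension-$2$ complete discretely valued fields under finite extensions established in the first paragraph, combined with Theorem \ref{thm Serre II}.
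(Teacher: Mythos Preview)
Your strategy matches the paper's, but two points need sharpening. First, the standing hypothesis of \cite[Thm.~2.1]{CTGP} is not Serre's conjecture~II but rather ``period $=$ index for central simple algebras over every finite extension of $K$'' (together with $\mathrm{cd}(K)\leq 2$). Your stability argument for finite extensions is exactly what is needed, but the reference to invoke over each such $L$ is Proposition~\ref{prop csa2}, not Theorem~\ref{thm Serre II}. Theorem~\ref{thm Serre II} is still required---for injectivity (your twisting argument is correct and is what the paper does) and for the $E_8$ case in~(\ref{item isotropy})---but it is not the input to \cite[Thm.~2.1]{CTGP}.

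Second, \cite[Thm.~2.1]{CTGP} is stated for $G$ absolutely almost simple and yields surjectivity of $H^1(K,G_{\mathrm{ad}})\to H^2(K,\widetilde\mu)$ with $\widetilde\mu$ the full center of $\widetilde G$; it does not directly hand you bijectivity of $\partial$ for an arbitrary semisimple $G$ with an arbitrary central quotient. The paper therefore first treats the adjoint case (reducing to the absolutely almost simple situation via isotypic decomposition and Shapiro, where types $E_8$, $F_4$, $G_2$ are trivial since $\widetilde\mu=1$), and then runs a short diagram chase comparing $G$ with $G_{\mathrm{ad}}$ and using the $H^1(K,\nu)$-action on fibers to lift surjectivity to the intermediate quotient. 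Similarly for~(\ref{item isotropy}), after reduction to the almost-simple adjoint case, \cite[Thm.~2.1(b)]{CTGP} does not cover $E_8$; that case is handled separately by Theorem~\ref{thm Serre II}, which forces any $K$-form of $E_8$ to be split. So ``delivers at once'' overstates what the black box provides: the reductions are routine but not automatic.
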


\begin{proof}
We prove \eqref{item bijection}. Theorem \ref{thm Serre II} applied to all twisted forms of $\widetilde G$ implies that the boundary map is injective.

In order to establish surjectivity, assume first that $G$ is adjoint. Using the isotypic decomposition for $G$ and $\widetilde G$ \cite[XXIV.5.9]{SGA3}, we may assume by Shapiro's Lemma that $G$ is absolutely almost $K$-simple. This means that the absolute Cartan-Dynkin diagram of $G$ is connected. Surjectivity is then obvious in types $E_8$, $F_4$ and $G_2$ so that we can exclude those types. Finally, since the field $K$ satisfies the ``period equals index'' property (Prop.~\ref{prop csa2}), the boundary map is surjective in view of \cite[Thm.\ 2.1.(a)]{CTGP}.

We move on to the general case. We denote by $G_{\ad}$ the adjoint group of $G$. We consider the following commutative diagram with exact rows and columns
\[
\xymatrix@R=1em{
&  && 1\ar[d] \\
& 1\ar[d] && \nu \ar[d] \\
1 \ar[r] &  \mu \ar[r]\ar[d] & \widetilde G \ar[r]\ar@{=}[d] & G \ar[r] \ar[d] & 1 \\
1 \ar[r] &  \widetilde \mu \ar[r] \ar[d] & \widetilde G \ar[r] & G_{\ad} \ar[r] & 1 \\
& \nu \ar[d] && \\
& 1 &&
}
\]
This induces the commutative exact diagram 
of pointed sets

\[
\xymatrix{
H^1(K,\nu) \ar[d] \ar@{=}[r] & H^1(K,\nu) \ar[d] \\  
H^1(K,G) \ar[r]^\partial \ar[d] & H^2(K,\mu) \ar[d] \\
H^1(K,G_{\ad}) \ar[r]^{\sim} \ar[d]
 & H^2(K,\widetilde \mu)\ar[d]  \\
H^2(K,\nu) \ar@{=}[r] & H^2(K,\nu)  
}
\]
where we reported the adjoint case. We are given $\gamma \in H^2(K,\mu)$. The diagram provides an element $\alpha \in H^1(K,G_{\ad})$ mapping to the image of $\gamma$ in $H^2(K, \widetilde \mu)$, and $\alpha$ lifts to an element $\beta \in H^1(K,G)$.

Since $H^1(K,\nu)$ acts transitively on the fibers of $H^1(K,G)\to H^1(K,G_{\ad})$ and since the boundary map $\partial:H^1(K,G) \to H^2(K,\mu)$ is $H^1(K,\nu)$-equivariant, we can modify $\beta$ by an element of $H^1(K,\nu)$ in order to ensure that $\partial(\beta)=\gamma$.\\

We prove \eqref{item isotropy}. We can assume that $G$ is adjoint and absolutely almost $K$-simple. If $G$ is of type $E_8$, it is split in view of Theorem \ref{thm Serre II} so in particular isotropic. Otherwise we apply \cite[Thm.\ 2.1.(b)]{CTGP}.
\end{proof}

We expect that the characteristic zero can be removed from the assumptions of Theorem \ref{thm_BT+}. In the case of a local field of positive characteristic, this has been proven by Nguy\^e$\tilde{\rm n}$ Qu\^o\'c Th\v{a}\'ng (cf.~\cite[Th.\ A]{Thang}).

\section{Torsors under parahoric group schemes}\label{section_KP}
As we announced in the introduction, Theorem \ref{thm Serre II} allows us to extend to the $E_8$-case the 
method of Kisin-Pappas on extension of torsors \cite[\S 1.4]{KisinPappas}. We briefly explain this in what follows.

We take the notation of \cite{KisinPappas}. Let $p$ be a prime number. Let $k$ be either a finite extension of $\F_p$  or an algebraic closure of $\F_p$. Let 
$\bar{k}$ be an algebraic closure of $k$. We set
$W = W(k)$ (i.e.\ the ring of Witt vectors),
$K_0 = \mathrm{Frac}(W)$ and $L = \mathrm{Frac}( W(\bar{k}))$. Let $\cal O_E$ be the $p$-adic completion of $W[[u]](p)$. This is a henselian discretely valued ring with residue field $k((u))$ and fraction field $E = \cal O_E\bigl[\frac{1}{p}\bigr] = K_0\{\{u\}\}$.
We set $\bD := \spec(W[[u]])$ and
$\bD^\times := \bD \setminus\{(u, p)\}$.

We are given a connected reductive group $G$ over $K_0$, and let $\cG$ be a parahoric Bruhat-Tits (smooth) group scheme over $W$ for $G$; i.e. $\cG = G_x^0$ for a point $x$ in the Bruhat-Tits building $B(G, K_0 )$.
We discuss now the following key vanishing statement.

\begin{proposition} \label{prop_KP}
We have  $H^1(\bD^\times , \cG ) = 1$.
\end{proposition}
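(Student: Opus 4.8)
The plan is to analyze the punctured spectrum of the two-dimensional regular local ring $R:=W[[u]]$, whose maximal ideal is $(p,u)$, by reducing to its two ``boundary ends''. Since the only common zero of $p$ and $u$ is the removed closed point, $\bD^\times$ is covered by the two affine opens $U_u:=\spec R[1/u]$ and $U_p:=\spec R[1/p]$, with overlap $\spec R[1/pu]$, and a $\cG$-torsor on $\bD^\times$ amounts, by fppf descent, to torsors on $U_u$ and $U_p$ together with a gluing datum over the overlap. Equivalently, and more conveniently for applying Hensel's lemma, I would use a Beauville--Laszlo formal gluing along the two divisors $\{u=0\}$ and $\{p=0\}$, reducing the triviality of a torsor to (i) the triviality of its restrictions to the two complete local rings $\cal O_E=\widehat{R_{(p)}}$ (with residue field $k((u))$ and fraction field $E$) and $\widehat{R_{(u)}}$ (with residue field $K_0$ and fraction field $F$), and (ii) the triviality of the comparison data over the generic fibres $\spec E$ and $\spec F$.

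For step (i), I would argue exactly as in the positive-characteristic proof of Theorem \ref{thm Serre II}. As $\cG$ is a parahoric, it is smooth over $W$ with geometrically connected special fibre, so its base changes to $\cal O_E$ and $\widehat{R_{(u)}}$ are smooth with connected special fibres $\cG_{k((u))}$ and $G_{K_0}$; note that the generic fibre $\cG[1/p]=G_{K_0}$ reappears as the special fibre over $\widehat{R_{(u)}}$, since $K_0$ has characteristic $0$. Hensel's lemma then identifies $H^1(\cal O_E,\cG)$ with $H^1(k((u)),\cG_{k((u))})$ and $H^1(\widehat{R_{(u)}},\cG)$ with $H^1(K_0,G)$. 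The special fibre $\cG_{k((u))}$ has a split unipotent radical (McNinch, Oesterl\'e) and connected reductive quotient $M$, so its $H^1$ is that of $M$; and when $k$ is an algebraic closure of $\F_p$ both residue fields $k((u))$ and $K_0$ have separable cohomological dimension $\leq 1$, whence Steinberg's theorem (Serre's Conjecture I) makes both groups vanish. Thus both integral restrictions are trivial.

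For step (ii), the key point is that the comparison of these two trivializations is controlled by the Galois cohomology of the fields $E$ and $F$, which are complete discretely valued fields of dimension $\dim k((u))+1$ and $\dim K_0+1$ respectively; when $k$ is algebraically closed, both dimensions equal $2$. Here Theorem \ref{thm Serre II} enters decisively: for the semisimple simply connected $G$ (in particular the group of type $E_8$, which is the case newly unlocked) it gives $H^1(E,G)=H^1(F,G)=1$, so that the obstruction to gluing the two trivializations vanishes and the torsor on $\bD^\times$ descends to the trivial torsor. This is precisely the step where Theorem \ref{thm Serre II} upgrades the Kisin--Pappas method to the $E_8$-case. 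A general connected reductive $G$ would then be handled by combining this with the boundary-map bijection of Theorem \ref{thm_BT+} and the ``period equals index'' property (Proposition \ref{prop csa2}).

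The main obstacle will be step (ii): making the Beauville--Laszlo gluing interact correctly with the non-reductive parahoric group scheme across the codimension-two point, equivalently establishing the required factorization of $\cG$-points over the overlap and pinning down the exact dimension-two field over which Serre's Conjecture II is invoked. A secondary difficulty is that the residue field $k((u))$ is imperfect, so one must systematically use separable cohomological dimension together with the splitness of unipotent radicals, just as in the proof of Theorem \ref{thm Serre II}. Finally, the dimension count only yields $2$ when $k=\overline{\F}_p$; for $k$ a finite field I would deduce the statement by base change, writing $\overline{\F}_p=\bigcup_n\F_{p^n}$ and $W(\overline{\F}_p)=\widehat{\mathrm{colim}}_n\,W(\F_{p^n})$, so that a class over $\bD^\times$ dies over the algebraically closed constant-field extension, hence over some finite unramified cyclic extension, and then descending it through the resulting cyclic Galois cover using the smoothness of $\cG$ and the vanishing already established over $\overline{\F}_p$.
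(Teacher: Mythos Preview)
The paper does not supply a proof of this proposition: it records that the statement is due to Kisin--Pappas under the extra hypotheses that $G$ splits over a tamely ramified extension of $K_0$ and has no factor of type $E_8$, and in full generality to Ansch\"utz by a different method, and then simply observes that Theorem~\ref{thm Serre II} is the missing cohomological input needed to lift the $E_8$-restriction from the Kisin--Pappas argument. So there is no detailed argument in the paper to compare against; the comparison has to be with \cite[Prop.~1.4.3]{KisinPappas}.

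Your sketch assembles the right ingredients --- Hensel reduction to the special fibre, splitting off the unipotent radical via McNinch and Oesterl\'e, Steinberg's theorem over residue fields of separable dimension $\leq 1$, and Serre's Conjecture II over a complete discretely valued field of dimension $2$ --- and you correctly locate Theorem~\ref{thm Serre II} as the new input for $E_8$. There is, however, a genuine structural gap in the decomposition. The two rings $\cal O_E$ and $K_0[[u]]$ you single out are only the completed local rings of $\bD^\times$ at the \emph{two} closed points $(p)$ and $(u)$; but $\bD^\times$, being the punctured spectrum of a two-dimensional regular local ring, has infinitely many other closed points (the height-one primes $(f)$ for $f$ an irreducible Weierstrass polynomial), and the map $\spec\cal O_E\sqcup\spec K_0[[u]]\to\bD^\times$ is not an fpqc cover. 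A Beauville--Laszlo gluing along $\{p=0\}$ and then along $\{u=0\}$ still leaves the open complement $\spec R[1/pu]$, so ``torsor on $\spec\cal O_E$'' plus ``torsor on $\spec K_0[[u]]$'' plus ``comparison data over $E$ and $F$'' is \emph{not} equivalent to a torsor on $\bD^\times$, and your (i)+(ii) do not suffice. Proving triviality on $U_p=\spec R[1/p]$ --- a one-dimensional regular ring, but not a field and not one of your formal disks --- is in fact where much of the work in \cite{KisinPappas} sits. A smaller issue: once both pieces of a Beauville--Laszlo cover are trivialized, the remaining obstruction is a double coset in $\cG(\cal O_E)\backslash G(E)/G(R[1/p])$, i.e.\ a Cartan-type factorization in the loop group, which is not the same assertion as $H^1(E,G)=1$; in \cite{KisinPappas} the vanishing of $H^1(E,G)$ is used to trivialize the torsor over $E$, while the factorization is established separately. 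Your descent from $\bar k$ to finite $k$ is in the right spirit and is indeed part of the reduction in \cite{KisinPappas}.
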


The statement is due to Kisin-Pappas if $G$ splits over a tamely ramified extension of $K_0$ and is away of type $E_8$ \cite[Prop.~1.4.3]{KisinPappas}; it has been proven in general by Ansch\"utz by another method \cite[Prop.~10.3]{An22}.

For $G$ of type $E_8$, the vanishing Theorem \ref{thm Serre II} is enough to add to the method of \cite{KisinPappas} to obtain the above statement in a simpler way.

\section*{Appendix}

For completeness, we provide here the proofs of Lemmas \ref{lema absorcion i coprimo}, \ref{lema absorcion i divide a p} and \ref{lema absorcion i=0}, which boil down to explicit computations using Kato's isomorphisms \eqref{eq Kato i}--\eqref{eq Kato iv}. Similar computations can be found in \cite[Lem.~1.5, 1.6, 1.7]{LieblichParimalaSuresh}.

\begin{proof}[Proof of Lemma \ref{lema absorcion i coprimo}]
Since $a\in u_2^i(K)\smallsetminus u_2^{i+1}(K)$, we know by isomorphism \eqref{eq Kato ii} that $\bar x\neq 0$ and $d\bar y\neq 0\in\Omega_{\bar K}^1$. This last fact implies that $\bar y\not\in \bar K^p$ and hence $\bar y$ generates a $p$-basis of $\bar K$. In particular, every element in $\Omega_{\bar K}^1$ has the form $\bar z \frac{d\bar y}{\bar y}$ with $\bar z\in\bar K$.

Assume first that $(j,p)=1$. Then isomorphism \eqref{eq Kato ii} and the observation above tell us then that $b$ may be written as
\[b=\{1-\pi^jz,y\}+c,\text{ with } z\in\cal O_K \text{ and } c\in u_2^{j+1}(K).\]
We see then that $a+b=\{1-\pi^ix',y\}+c$ with $x':=x+\pi^{j-i}z-\pi^jxz$, so we are done in this case.\\

Assume now that $p|j$. Fix $k$ such that $ik\equiv 1\pmod p$, so that
\[a=\{1-\pi^ix,y\}=\{1-\pi^ix,\pi^{ik}x^ky\}=\{1-\pi^ix,\pi x^k y\}=\{1-{\pi'}^iz,\pi'\},\]
where $\pi':=\pi x^k y$ is a uniformizer (recall that $\bar x,\bar y\neq 0$, so $x,y\in\cal O_K^\times$) and $z:=\pi^i x{\pi'}^{-i}\in\cal O_K^\times$. Then isomorphism \eqref{eq Kato iii}  applied to the uniformizer $\pi'$ tells us that $b$ may be written as
\[b=\{1-{\pi'}^jz',\pi'\}+c,\text{ with } z'\in\cal O_K \text{ and } c\in u_2^{j+1}(K).\]
We see then that $a+b=\{1-{\pi'}^ix',\pi'\}+c$ with $x':=z+{\pi'}^{j-i}z'-{\pi'}^jzz'\in\cal O_K^\times$. In order to conclude, it suffices then to note that
\[\{1-{\pi'}^ix',\pi'\}=\{1-{\pi'}^ix',\pi'^{-ki+1}{x'}^{-k}\}=\{1-{\pi'}^ix',{x'}^{-k}\},\]
so that putting $y':={x'}^{-k}$ we are done.
\end{proof}

\begin{proof}[Proof of Lemma \ref{lema absorcion i divide a p}]
Assume first that $j|p$. Then isomorphism \eqref{eq Kato iii} tells us that $b$ may be written as
\[b=\{1-{\pi}^jz,\pi\}+c,\text{ with } z\in\cal O_K \text{ and } c\in u_2^{j+1}(K).\]
We see then that $a+b=\{1-\pi^iy',\pi\}+c$ with $y':=y+\pi^{j-i}z-\pi^jyz$, so we are done in this case.\\

Assume now that $(j,p)=1$. Since $a\in u_2^i(K)\smallsetminus u_2^{i+1}(K)$, we know by isomorphism \eqref{eq Kato iii} that $\bar y\not\in\bar K^p$. This implies that $\bar y$ generates a $p$-basis of $\bar K$. In particular, every element in $\Omega_{\bar K}^1$ has the form $\bar z \frac{d\bar y}{\bar y}$ with $\bar z\in\bar K$. Isomorphism \eqref{eq Kato ii} tells us then that $b$ corresponds to a differential of the form
\[\bar z\frac{d\bar y}{\bar y}=\sum_{l=0}^{p-1}(\bar z_l^p \bar y^l)\frac{d\bar y}{\bar y},\text{ with } \bar z_l\in \bar K,\]
and hence it may be written as
\[b=\sum_{l=0}^{p-1}\{1-{\pi}^jz_l^py^l,y\}+c,\text{ with } z_l\in\cal O_K \text{ and } c\in u_2^{j+1}(K).\]
Now, if $l\neq 0$ and $m$ is such that $ml\equiv -1\pmod p$, we see that
\begin{multline*}
\{1-{\pi}^jz_l^py^l,y\}=\{1-{\pi}^jz_l^py^l,{\pi}^{mj}z_l^{mp}y^{ml+1}\}=\{1-{\pi}^jz_l^py^l,\pi^{mj}\}\\
=mj\{1-{\pi}^jz_l^py^l,\pi\}=\{(1-{\pi}^jz_l^py^l)^{mj},\pi\}=\{1-{\pi}^jw_l,\pi\},
\end{multline*}
for some $w_l\in\cal O_K$. Each of these terms can be absorbed by $a$ in the exact same way as we did in the case where $j|p$. Thus, we may assume that $b=\{1-\pi^jz_0^p,y\}+c$.

Now, put $\pi':=\pi y$. Since $p|i$, on one hand we have
\[a=\{1-\pi^iy,\pi\}=\{1-\pi^iy,\pi^{i+1}y\}=\{1-\pi^iy,\pi y\}=\{1-{\pi'}^iy^{1-i},\pi'\}.\]
On the other hand, for $n$ such that $nj\equiv 1\pmod p$,
\[\{1-\pi^jz_0^p,y\}=\{1-\pi^j z_0^p,\pi^{nj}z_0^{np}y\}=\{1-\pi^j z_0^p,\pi y\}=\{1-{\pi'}^j y^{-j}z_0^p,\pi'\}.\]
We see then that $a+b=\{1+{\pi'}^iy',\pi'\}+c$ with $y':=y^{1-i}+{\pi'}^{j-i}y^{-j}z_0^p-{\pi'}^jy^{1-i-j}z_0^p$, so we are done in this last case.
\end{proof}

\begin{proof}[Proof of Lemma \ref{lema absorcion i=0}]
Assume first that $j|p$. Then isomorphism \eqref{eq Kato iii} tells us that $b$ may be written as
\[b=\{1-{\pi}^jz,\pi\}+c,\text{ with } z\in\cal O_K \text{ and } c\in u_2^{j+1}(K).\]
We see then that $a+b=\{y',\pi\}+c$ with $y':=y(1-{\pi}^jz)$, so we are done in this case.\\

Assume now that $(j,p)=1$. Since $a\in \kk_2(K)\smallsetminus u_2^{1}(K)$, we know by isomorphism \eqref{eq Kato i} that $\bar y$ is nontrivial in $\kk_1(\bar K)$ and hence $\bar y\not\in \bar K^p$. Thus, as in the last proof, we may assume that every element in $\Omega_{\bar K}^1$ has the form $\bar z \frac{d\bar y}{\bar y}$ with $\bar z\in\bar K$. Isomorphism \eqref{eq Kato ii} tells us then that $b$ can be written as
\[b=\{1-\pi^jz,y\}+c,\text{ with } z\in\cal O_K \text{ and } c\in u_2^{j+1}(K).\]
Finally, noting that
\[\{1-\pi^jz,y\}=-\{y,1-\pi^jz\}=\{y,(1-\pi^jz)^{-1}\},\]
we see that $a+b=\{y,\pi'\}+c$ with $\pi':=\pi(1-{\pi}^jz)^{-1}$, so we are done in this case as well.
\end{proof}

\end{document}